\documentclass[10pt,a4paper]{amsart}
\usepackage[utf8]{inputenc}

\usepackage{amsmath}
\usepackage{amsfonts}
\usepackage{amssymb}
\usepackage{amsthm}
\usepackage{thmtools}
\usepackage{mathtools}
\usepackage[nameinlink]{cleveref}
\usepackage{enumitem} 
\usepackage{epigraph}
\usepackage[color=blue!20]{todonotes}
\setuptodonotes{inline}
\usepackage{stmaryrd}
\usepackage{tikz-cd}
\usepackage[nocompress]{cite}
\usepackage[new]{old-arrows}

\newtheorem{theorem}{Theorem}[]
\newtheorem{conjecture}{Conjecture}[]
\newtheorem{lemma}{Lemma}[section]

\newtheorem{proposition}[lemma]{Proposition}

\theoremstyle{definition}
\newtheorem{definition}[lemma]{Definition}
\newtheorem{remark}[lemma]{Remark}

\newcommand{\edge}{\operatorname{edge}\,}
\newcommand{\vertt}{\operatorname{vert}\,}
\newcommand{\vertti}{\operatorname{vert}_0}
\newcommand{\verttt}{\operatorname{vert}_1}
\newcommand{\Aut}{\operatorname{Aut}}
\newcommand{\st}{\operatorname{star}}
\newcommand{\core}{\operatorname{core}}
\newcommand{\Homeo}{\operatorname{Homeo}}
\newcommand{\cyc}{\operatorname{cyc}}
\newcommand{\supp}{\operatorname{supp}}

\newcommand{\mcal}{\mathcal}
\newcommand{\ve}{\varepsilon}
\renewcommand{\phi}{\varphi}

\title[On context-free subgroups]{On context-free subgroups and R. Thompson's group $V$}
\author{Henry Jaspars}
\address{Trinity College\\Cambridge\\CB2 1TQ\\United Kingdom}
\email{hj388@cam.ac.uk}

\subjclass[2020]{Primary 20F10; Secondary 20F65, 68Q45, 68Q70}
\keywords{Context-free languages, Membership problem, Transition groups, R. Thompson's group, Combinatorics on words}

\begin{document}
\maketitle

\epigraph{A\textsc{lgebra} is a general method of computation... an U\textsc{niversal} A\textsc{rithmetick}}{C\textsc{olin} M\textsc{aclaurin},\\ A Treatise on Algebra}

\begin{abstract}
Consider $V_*$, the subgroup of R. Thompson's group $V$ which stabilises $0^\omega$ under the natural action upon the Cantor set, $\{0, 1\}^\omega$. Let $G_*$ be any subgroup of a finitely generated group $G$. We show that $G_*$ is a context-free subgroup of $G$ if and only if $G_*$ is a pullback of $V_*$ under a homomorphism $G \rightarrow V$. In particular, this shows the existence of a hardest context-free membership problem. As a consequence, we prove that a group $G$ embeds into $V$ if and only if it is the transition group of a finite union of context-free automata, or equivalently, if there exist finitely many context-free subgroups of $G$ whose cores intersect trivially.
\end{abstract}

\section{Introduction}

Throughout, let $V$ refer to R. Thompson's group and $V_*$ to the stabiliser of $0^\omega$ under the natural action of $V$ upon the Cantor set $\{0, 1\}^\omega$ \cite{cfp}. We prove the following.

\begin{theorem}\label{thm:main}
Let $G$ be a finitely generated group and let $G_*$ be a subgroup of $G$. Then $G_*$ is a context-free subgroup of $G$ if and only if there exists a homomorphism $\phi: G \rightarrow V$ with $\phi^{-1}\,V_* = G_*$. 
\end{theorem}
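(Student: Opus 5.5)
We prove the two directions separately. Throughout, a subgroup $G_*$ of $G = \langle X\rangle$ is \emph{context-free} when the word problem language $\{w \in (X^{\pm})^* : \overline{w} \in G_*\}$ is context-free; this is independent of the finite generating set. The equivalent model we use is a pushdown automaton. Its configurations are pairs (state, stack word), and it accepts exactly the words $w$ with $\overline w\in G_*$.

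\textbf{($\Leftarrow$).} Context-free subgroups are closed under pullback along homomorphisms between finitely generated groups. Indeed, for each generator $x$ of $G$, the map $\phi$ sends $x$ to a word $u_x$ over a finite generating set of $V$. The membership language of $\phi^{-1}V_*$ is then the inverse image of the membership language of $V_*$ under the monoid homomorphism $x\mapsto u_x$. Context-free languages are closed under inverse homomorphism.

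So it suffices to show that $V_*$ is a context-free subgroup of $V$. Fix the standard finite generating set of $V$, each generator a prefix-replacement map. A PDA tracks the image of $0^\omega$ under the partial product as follows. At any moment the image has the form $p\,0^\omega$ with $p$ a finite word. Store $p$ on the stack, top at the front. Applying a generator $g$ reads a bounded-length prefix of $p0^\omega$ and replaces it by a bounded-length word. The PDA pops boundedly many symbols, treating an empty stack as $0$'s, and pushes the replacement. Accept iff at the end the stack lies in $0^*$, i.e.\ the point is $0^\omega$. This last check is done by popping and verifying, or by keeping a finite flag. One subtlety: the action is on the left end of the word, and the stack top is the left end, so this is consistent.

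\textbf{($\Rightarrow$).} This is the main step. Let $G_*$ be context-free. Then, by Muller--Schupp style results, the Schreier coset graph $\Gamma$ of $G_*$ in $G$ is a context-free graph: finitely many end-isomorphism types, and quasi-isometric to a tree. I will use the Muller--Schupp description: $\Gamma$ is the configuration graph of a deterministic, pushdown-like rewriting system. Vertices (cosets) correspond to words $q\,s$, with $q$ in a finite set $Q$ and $s$ a stack word over a finite alphabet $A$. Each generator acts by a bounded rewrite of the top of the stack. We can arrange this to be a bijection on configurations, because each generator induces a permutation of the cosets.

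Now encode configurations into the Cantor set. Replace $A$ by a prefix code in $\{0,1\}^*$, and $Q$ by further prefix codes. Extend each coset-configuration $q s$ to the infinite word $c(q)c(s)0^\omega$, and let the rest of the Cantor set be acted on trivially or by some filler. Each generator then acts by finitely many prefix replacements, hence as an element of $V$. The base coset $G_*$ corresponds to the configuration $q_0\varepsilon$; choose codes so that $c(q_0)\,0^\omega = 0^\omega$. Then $\phi(g)$ fixes $0^\omega$ iff $g$ stabilises the base coset, giving $\phi^{-1}V_* = G_*$.

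\textbf{Main obstacle.} The hard part is making the rewriting genuinely bijective and total on a clopen partition, so that $\phi(g)$ is a homeomorphism. The rewrite must also respect the relations of $G$, so that $\phi$ is a homomorphism. Relations hold automatically on encoded configurations, since they come from the group action on cosets. However, on points not of the form $c(q)c(s)0^\omega$, the rules must still compose correctly. I would handle this as follows:
\begin{itemize}
\item choose the prefix code for $A\cup\{\text{bottom}\}$ to be complete, so that every infinite sequence decodes as a (finite or infinite) configuration;
\item use the tree structure (bounded depth of rewriting, via a normal form relative to the finitely many cone types) so that the generator actions extend to infinite stacks as limits;
\item check that each relation holds there by continuity, since finite configurations are dense.
\end{itemize}
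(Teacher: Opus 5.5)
\textbf{Verdict.} Your ($\Leftarrow$) direction is correct and is essentially the paper's argument: pullback via inverse homomorphisms, plus a pushdown automaton that tracks the image of $0^\omega$ with the front of the word on top of the stack. The ($\Rightarrow$) direction has a genuine gap, exactly at the step you call the main obstacle. You never construct, for each generator, a homeomorphism of $\{0,1\}^\omega$ that is a prefix partition map and satisfies the relations of $G$, and the remedies you propose do not work as stated.

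\textbf{Why the proposed encoding fails.} With a single complete prefix code on $A\cup\{\text{bottom}\}$, a generic binary sequence decodes to an \emph{invalid} configuration. The bottom symbol can appear in the middle, consecutive stack symbols can be incompatible (in the paper's terms, stack words must be paths in the contextual automaton $\Lambda$ starting at the root), or $q$ can be incompatible with the top symbol.
\begin{itemize}
\item Your rewrite rules are undefined on these points. Valid finite configurations are not dense in $\{0,1\}^\omega$, so your continuity argument says nothing about the relations there.
\item You cannot act trivially on invalid points either. Every cone $c(q)c(a_1)\cdots c(a_k)\{0,1\}^\omega$ around a valid point contains invalid points, and a prefix replacement moves the whole cone.
\item Local extensions break relations. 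Suppose you extend trivially on invalid top pairs. A generator $s$ that pops $a$ from $q\,a\,b\cdots$, with $b$ incompatible below $a$, lands on an invalid top. Then $\overline{s}$ fixes that point, so $s\,\overline{s}$ already acts nontrivially.
\item Injectivity also needs care. A complete prefix code contains some codeword $0^k$. If it codes a stack symbol $a$, then $c(q_0)0^\omega$, $c(q_0)c(a)0^\omega,\dots$ all equal $0^\omega$, and $\phi^{-1}V_*$ can be strictly larger than $G_*$.
\end{itemize}

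\textbf{The missing idea.} You need to make the space of valid configurations itself the Cantor set. The steps are as follows.
\begin{itemize}
\item Realise the valid configurations, finite and infinite, as the infinite-path space of a finite automaton. Add an end-marker edge, as in the paper's $\Delta_+$, so that finite configurations become isolated and dense.
\item Check that each generator acts by local prefix maps, i.e.\ lies in $[\mcal{G}(\Delta_+)]$.
\item Identify this path space with $\{0,1\}^\omega$ using a \emph{state-dependent} complete prefix code (a partition labelling). Under it each finite configuration gets its own cone.
\item Conjugate so that a point in the base configuration's cone becomes $0^\omega$. This is \Cref{lem:embed}.
\end{itemize}

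\textbf{Comparison with the paper's route.} Your top-of-stack-first orientation is a genuinely different choice. The paper keeps stacks bottom-first, as paths from the root of $\Lambda$, so generators rewrite at the \emph{end} (\Cref{lem:confreemain}). It then needs the pumping automaton to convert this into prefix rewriting. With your orientation, valid stacks are reversed paths in $\Lambda$ that must end at the root. So the end-marker edge should be added only at root-type states. You also need to check that every cycle meets a state of out-degree at least $2$, so that infinite paths receive infinite codes; this follows from accessibility. Done this way, the pumping construction becomes unnecessary. As written, however, your proposal stops before the step that carries the content of the theorem.
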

This characterises exactly the class of context-free subgroups, originally studied by Ceccherini-Silberstein and Woess in \cite{silberstein}, and gives a relative counterpart to the classical theorem of Muller and Schupp in \cite{ms1, ms2, ms3}. Moreover, this proves the existence of a hardest context-free membership problem in the sense of Greibach \cite{hardest}.

\Cref{thm:main} generalises the following relationship between context-free languages and the dynamics of $V$ first discussed by Bennett and Bleak in \cite[Theorem 1]{demonstrative}. We call an embedding $\varphi: G \hookrightarrow V$ \emph{demonstrative} if there exists an open set $U \subset \{0, 1\}^\omega$ such that $U \cap U \cdot \varphi(g) = \emptyset$ whenever $g \neq 1$. We then have the following.

\begin{theorem}
Let $G$ be a finitely generated group. Then $G$ has context-free word problem if and only if there exists a demonstrative embedding $\varphi: G \rightarrow V$. 
\end{theorem}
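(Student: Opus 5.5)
We treat the two directions separately. The reverse direction reduces to \Cref{thm:main} applied to the trivial subgroup. The forward direction I would prove by a direct construction, since \Cref{thm:main} only yields an embedding and not a demonstrative one.

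\emph{Demonstrative embedding $\Rightarrow$ context-free word problem.} Let $\varphi$ be demonstrative with open set $U$. Shrinking $U$, we may assume it is a cone $w\{0,1\}^\omega$, and we put $x = w0^\omega \in U$. If $x\cdot\varphi(g) = x$, then $x \in U \cap U\cdot\varphi(g)$, so $g = 1$. Hence $\varphi^{-1}\operatorname{Stab}_V(x) = 1$. There is an element $h \in V$ with $x \cdot h = 0^\omega$: take the prefix replacement $w \mapsto 0^{|w|}$, completed to a bijection between two complete antichains of the same size. Then $h^{-1}\operatorname{Stab}_V(x)h = V_*$. The conjugated homomorphism $\psi(g) = h^{-1}\varphi(g)h$ therefore satisfies $\psi^{-1}V_* = 1$. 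By \Cref{thm:main}, the trivial subgroup is a context-free subgroup of $G$. This says precisely that the word problem of $G$ is context-free.

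\emph{Context-free word problem $\Rightarrow$ demonstrative embedding.} By Muller--Schupp, $G$ has a free subgroup $F$ of finite index $n$. The construction has three steps.

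First, $F$ admits a demonstrative embedding into $V$. Since every finitely generated free group embeds in $F_2$, and demonstrativity passes to subgroups, it suffices to treat $F_2 = \langle a,b\rangle$. Choose four disjoint cones $A^\pm, B^\pm$ and a fifth disjoint cone $U$. Choose $a \in V$ with $(\{0,1\}^\omega\setminus A^-)\cdot a \subseteq A^+$, and $b$ similarly for $B^\pm$. Such elements exist because $V$ can map the complement of one cone onto a proper subcone of another cone by prefix replacement. The ping-pong lemma shows that $a$ and $b$ generate a free group. The standard ping-pong argument also shows that every nontrivial reduced word sends $U$ into $A^\pm\cup B^\pm$, which is disjoint from $U$; this is the demonstrative property.

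Second, induce the action up to $G$. Fix coset representatives $t_1 = 1, t_2, \dots, t_n$ for $F\backslash G$. Write $t_i g = f_i(g)\, t_{\pi_g(i)}$ with $f_i(g) \in F$. Split $\{0,1\}^\omega$ into $n$ disjoint cones $C_1,\dots,C_n$, each identified with $\{0,1\}^\omega$ by deleting its prefix. Let $g$ act by sending $C_i$ to $C_{\pi_g(i)}$ through $f_i(g)$. This is the usual induced (wreath-product) action $G \hookrightarrow F\wr S_n$. It is realised by an element of $V$, because each piece is a prefix replacement composed with an element of $V$. It is faithful, being an induced action of a faithful action.

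Third, check demonstrativity with $U' := U \subseteq C_1$. If $\pi_g(1) \neq 1$, then $U'\cdot g \subseteq C_{\pi_g(1)}$, which is disjoint from $C_1$. If $\pi_g(1) = 1$, then $g \in F$ and $f_1(g) = g$, so $U'\cap U'\cdot g = \emptyset$ by the first step whenever $g \neq 1$.

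I expect the main obstacle to be the first step. One has to exhibit explicit elements of $V$ realising the ping-pong inclusions. One must also verify carefully that the cone $U$ is mapped off itself by every nontrivial reduced word, including words that begin and end with the same generator. The remaining steps are routine bookkeeping.
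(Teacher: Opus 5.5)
Your proposal is correct. Your reverse direction is the argument the paper's machinery gives: the point $w0^\omega\in U$ has stabiliser pulling back to the trivial group, conjugating it to $0^\omega$ (as in the proof of \Cref{prop:demon}) gives $\psi^{-1}V_*=1$, and the `if' direction of \Cref{thm:main} finishes.

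Your forward direction takes a genuinely different route. The paper never passes through virtual freeness. It applies \Cref{lem:finalstep} to the trivial subgroup, whose Schreier automaton is the context-free Cayley graph. This gives a faithful action of $G$ on $(\Delta_+)^\omega_0$ by elements of $[\mathcal{G}(\Delta_+)]$, and \Cref{lem:embed} transports it into $V$. There demonstrativity is automatic. The point of $\Delta^*_0$ corresponding to the identity is isolated, and its cone $f(\pi)\{0,1\}^\omega$ is fixed pointwise by the stabiliser and moved off itself by every other element; this is \Cref{prop:demon}. So your remark that \Cref{thm:main} only yields an embedding is true of its statement but not of its proof, and within this paper the detour is unnecessary.

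What your route buys is concreteness: explicit ping-pong elements and an explicit induced action of $G$ on $n$ cones. The cost is invoking the Muller--Schupp structure theorem as a black box, which in full generality rests on Dunwoody's accessibility theorem. What the paper's route buys is independence from that theorem, and a relative version: demonstrativity with respect to an arbitrary context-free subgroup $H_*$. That relative version is exactly what \Cref{thm:wreath} needs, and your argument does not readily provide it.

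The step you flag as the main obstacle is in fact routine. Since $U$ is disjoint from all four ping-pong cones, induction on the length of a reduced word $x_1\cdots x_k$ shows that $U\cdot x_1\cdots x_k$ lies in the cone attached to $x_k$. This holds whether or not the word begins and ends with the same letter.
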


We also have the following corollary of \Cref{thm:main}.

\begin{theorem}\label{cor:main}
Let $G$ be a finitely generated group. Then there exist finitely many context-free subgroups of $G$ whose cores intersect trivially if and only if $G$ embeds into $V$.
\end{theorem}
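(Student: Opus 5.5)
We derive \Cref{cor:main} from \Cref{thm:main} together with two standard facts: the core of a subgroup $H \le G$ is $\core(H) = \bigcap_{g\in G} g^{-1}Hg$, which is the kernel of the action of $G$ on $H\backslash G$; and the action of $V$ on the Cantor set is faithful.

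\emph{Forward direction.} Suppose $H_1,\dots,H_n$ are context-free subgroups of $G$ with $\bigcap_i \core(H_i)=1$.
\begin{enumerate}[label=(\roman*)]
\item By \Cref{thm:main}, for each $i$ there is a homomorphism $\phi_i: G\to V$ with $\phi_i^{-1}V_* = H_i$.
\item Since $\phi_i^{-1}(g^{-1}V_*g)$ equals $\phi_i(g)$-conjugated data, we get
\[
\ker\phi_i \subseteq \core(H_i).
\]
Indeed, if $\phi_i(k)=1$ then $\phi_i(gkg^{-1})=1\in V_*$ for every $g$, so $gkg^{-1}\in H_i$ for all $g$.
\item Hence $\bigcap_i \ker\phi_i \subseteq \bigcap_i \core(H_i) = 1$, so the product map
\[
\Phi=(\phi_1,\dots,\phi_n): G \to V^n
\]
is injective.
\item It remains to embed $V^n$ into $V$. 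Partition the Cantor set into $n$ clopen cones, e.g. $0,10,110,\dots,1^{n-1}$. Each cone is identified with $\{0,1\}^\omega$ by deleting its prefix, and $V$ acts on each cone through this identification. The resulting elements are prefix-replacement maps on finitely many cones, so they lie in $V$. Since these copies have disjoint supports, they commute, giving an embedding $V^n\hookrightarrow V$.
\end{enumerate}
Composing gives $G\hookrightarrow V$.

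\emph{Converse.} Suppose $\phi: G\hookrightarrow V$ is an embedding. For $x\in\{0,1\}^\omega$ choose some $h_x\in V$ with $x\cdot h_x = 0^\omega$; such an element exists because $V$ acts transitively, at least on the points we will choose. The stabiliser of $x$ is then the conjugate $h_x V_* h_x^{-1}$. Define the pulled-back homomorphism $c_{h_x}\circ\phi$, where $c_{h_x}$ denotes the corresponding conjugation, so that
\[
(c_{h_x}\circ\phi)^{-1}V_* = \phi^{-1}\operatorname{Stab}(x).
\]
By \Cref{thm:main}, each $H_x=\phi^{-1}\operatorname{Stab}(x)$ is context-free. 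Its core lies in the pointwise stabiliser of the orbit $x\cdot\phi(G)$.

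\emph{Finiteness, the main obstacle.} We need finitely many points whose $G$-orbits together are dense. Then any element fixing all of them fixes a dense set, hence is the identity by continuity, and faithfulness gives $\bigcap_x \core(H_x)=1$. A single orbit need not be dense, so we argue with finitely many. Since $G$ is finitely generated, let $S$ be the finite generating set, and use the finitely many cones appearing in tree-pair representatives of $\phi(S)$.

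Choose orbits meeting every cone, for instance by taking rational points in each cone. The delicate point is that an element $g\ne1$ moves some cone, but it might fix every chosen orbit point. To handle this, use that $\phi(g)\ne 1$ moves some open set $U$. Then argue that some orbit point lies in $U$ and is moved, by showing that a suitable finite set of points has dense union of orbits.

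If density cannot be achieved this way, fall back on a different choice: take the points $x$ to be eventually-periodic points. The stabiliser of an eventually-periodic point in $V$ is conjugate to $V_*$, since rational points form a single $V$-orbit. The countably many rational points are dense, and $\bigcap_{\text{rational }x}\operatorname{Stab}(x)=1$. The remaining issue is to reduce to finitely many of them. For this, use that the cores $\core(H_x)$ are normal subgroups whose descending intersections stabilise on the relevant finite data. This reduction is the step I expect to require the most care.
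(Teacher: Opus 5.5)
Your forward direction is correct and is the paper's argument. You in fact need only the automatic inclusion $\ker\phi_i\subseteq\core(H_i)$, rather than the equality supplied by \Cref{lem:finalstep}.

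The converse has a genuine gap, exactly at the step you flag: nothing in the proposal produces finitely many points. Your stated target is also off in both directions.
\begin{itemize}
\item Finitely many $G$-orbits need not be dense. Take $G=\langle g\rangle$ with $\supp\phi(g)$ inside a small cone; then every point outside that cone is a singleton orbit.
\item Density is not required either. You only need the union of the chosen orbits to meet $\supp\phi(g)$ for every $g\neq 1$.
\item The fallback contains a false claim. $V$ acts by prefix replacement and so preserves tail-equivalence, hence the rational points do not form one $V$-orbit. The orbit of $0^\omega$ is $\{\alpha0^\omega\}$, and $\operatorname{Stab}((01)^\omega)$ is not conjugate to $V_*$ in $V$. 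This is repairable, since $\{\alpha0^\omega\}$ is itself dense.
\end{itemize}
The real problem is the final reduction: there is no principle making descending intersections of cores stabilise. In $\mathbb{Z}$ the subgroups $2^k\mathbb{Z}$ are normal and context-free (finite index), and their total intersection is trivial. Yet every finite subfamily has nontrivial intersection. So finiteness cannot be extracted abstractly from a countable family of context-free subgroups; it must come from the geometry of this particular action.

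The missing ingredient is the context-free structure of the Schreier automaton $\Gamma$ of $V_*$ in $V$. This automaton is context-free by the `if' direction of \Cref{thm:main} together with \Cref{lem:Schreier}. The paper packages the finiteness argument into \Cref{lem:cftrsub}. Since $\core(V_*)=1$ by density of the orbit of $0^\omega$, $V$ is the transition group of the single context-free automaton $\Gamma$, and this property passes to the finitely generated subgroup $G$.

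If you want to argue directly, proceed as follows.
\begin{itemize}
\item Write each generator of $G$ as a word of length at most $L$ in the generators of $V$.
\item Only finitely many $G$-orbits in $\vertt\Gamma$ meet the $L$-ball about the base point.
\item Any other orbit $O$ attains its minimal level $n$ at some vertex $v$. It lies in the context at level $n-L$ containing $v$, and $v$ is within distance $L$ of that context's finite frontier.
\item An isomorphism of contexts carrying $v$ to $v'$ conjugates the $G$-action on $O$ to the $G$-action on $v'\cdot\phi(G)$. Hence the two orbits have the same kernel.
\item There are finitely many context types, each with finite frontier, and $L$-balls are finite. So such pairs (context, $v$) fall into finitely many isomorphism classes.
\end{itemize}
Take one point from each near orbit and one from each isomorphism class; this gives finitely many points $x_1,\dots,x_m$ in the orbit of $0^\omega$. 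Then $\bigcap_i\core(\phi^{-1}\operatorname{Stab}(x_i))$ equals the kernel of the $G$-action on the whole orbit of $0^\omega$. That kernel is trivial because the orbit is dense and $\phi$ is injective.
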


This characterises exactly the class of transition groups of finite unions of context-free automata first described in \cite{cftr}, as well as providing a structure theorem for finitely generated subgroups of $V$. We remark that $V$ is not unique in this regard, as there are several groups which are known to be bi-embeddable with $V$; see for instance \cite{bleakcocf}.

In both \Cref{thm:main} and \Cref{cor:main}, results in the spirit of the `if' directions are already known due to \cite{lehnert, birget}. We nonetheless include their proofs in \Cref{section:pda} for the sake of completeness.

We are motivated in large part by the following conjecture of Lehnert in \cite{lehnert}, rephrased as follows by Bleak in \cite{bleakcocf}. Here, the \emph{co-word problem} refers to the complement of the word problem. 

\begin{conjecture}[Lehnert's Conjecture]
Let $G$ be a finitely generated group. Then $G$ has context-free co-word problem if and only if $G$ embeds into $V$. 
\end{conjecture}

By \Cref{cor:main}, we may rephrase Lehnert's conjecture in the following, purely language-theoretic setting.

\begin{conjecture}
Let $G$ be a finitely generated group. Then $G$ has context-free co-word problem $\mcal{L} \subset \Sigma^*$ if and only if there exist finitely many co-membership problems $\mcal{L}_i$ of subgroups of $G$ such that 
$$\bigcup_{\sigma \in \Sigma^*} \bigcup_{i = 1}^n \overline{\sigma} \, \mcal{L}_i^{\cyc} \sigma = \mcal{L}$$
where $\mcal{L}_i^{\cyc} \subset \Sigma^*$ denotes the set of cyclic shifts of words in $\mcal{L}_i$. 
\end{conjecture}
The `if' direction, namely, that transition groups of finite unions of context-free graphs have context-free co-word problem, is known due to \cite[Theorem 4.11]{cftr}.

Finally, \Cref{thm:main} allows us to construct interesting examples of context-free subgroups arising from the dynamics of $V$ on $\{0, 1\}^\omega$.

\begin{theorem}\label{thm:wreath}
Let $G$ and $H$ be finitely generated groups and let $G_*$ and $H_*$ be context-free subgroups of $G$ and $H$ respectively. Let
$$G \wr_{H_*} H = \bigg(\bigoplus_{H_* \setminus H} G\bigg) \rtimes H$$
denote the generalised wreath product. Then 
$$\bigg(\bigg(\bigoplus_{H_* \setminus (H - H_*)} G \bigg)\oplus G_*\bigg) \rtimes H_*$$
is a context-free subgroup of $G \wr_{H_*} H$.
\end{theorem}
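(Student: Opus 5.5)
We argue through \Cref{thm:main}. Its `only if' direction gives homomorphisms $\phi_G : G \to V$ and $\phi_H : H \to V$ with $\phi_G^{-1} V_* = G_*$ and $\phi_H^{-1} V_* = H_*$. Its `if' direction then reduces the problem to building one homomorphism
$$\Phi : G \wr_{H_*} H \to V \quad \text{with} \quad \Phi^{-1} V_* = \bigg(\bigg(\bigoplus_{H_* \setminus (H - H_*)} G \bigg)\oplus G_*\bigg) \rtimes H_*.$$
The group $G \wr_{H_*} H$ is finitely generated, by the generators of $H$ together with those of one copy of $G$ placed at the trivial coset, so \Cref{thm:main} applies.

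To build $\Phi$ we use the dynamics of $\phi_H(H)$ on the $\phi_H(H)$-orbit of $0^\omega$. This orbit is in bijection with $H_* \setminus H$ via $H_* h \mapsto 0^\omega \cdot \phi_H(h)$. We would like to plant a copy of $\phi_G(G)$ near each orbit point, so that the copy near $0^\omega\cdot\phi_H(h)$ represents the coordinate $H_*h$ of the direct sum.

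Concretely, we first conjugate so that $\phi_G(G)$ is supported inside a fixed cone $[w]$ that does not contain $0^\omega$. Put $\phi_G(G)$ into a new copy $W=[1]$ of the Cantor set, and put $\phi_H(H)$ into $[0]$ via the prefix-substitution embedding $V \to V$, $x \mapsto 0x$. The base copy of $G$ should act near $0^\omega$, so $\Phi$ on the base $G$ cannot merely act on $[1]$: it must also act on arbitrarily small neighbourhoods of $0^\omega$, and be compatible with conjugation by $\Phi(H)$.

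A cleaner route is to define $\Phi(g)$, for $g$ in the base copy of $G$, as the element acting by $\phi_G(g)$ on each suffix after a $1$ in the infinite family of cones $[0^n1]$, $n\ge 1$, in an $n$-independent way. This fails to lie in $V$ unless $\phi_G(g)$ is trivial near the points of interest, since infinitely many cones are involved. We therefore need a finite-state description. We would use the germ structure instead: $V_*$ is determined by the germ at $0^\omega$, and elements of $V$ fixing $0^\omega$ act on a neighbourhood $[0^k]$ as $0^kx\mapsto 0^{k'}x$.

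So the plan is to define $\Phi$ by letting the base $G$ act as $\phi_G(g)$ on the cone $[1]$, and then conjugate by $\phi_H$-elements. We then verify that $\Phi(g)$ commutes with $\Phi(h g' h^{-1})$ whenever $H_* h \ne H_*$. For this we arrange that $\phi_H(h)$ moves the support cone $[1]$, taken small, off itself for $h \notin H_*$. Achieving that is the main obstacle, and we would handle it by refining the support to a cone around a point close to $0^\omega$, using that $0^\omega$ is not fixed by $\phi_H(h)$ for $h\notin H_*$ and continuity.

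Once $\Phi$ is well defined, we compute the preimage of $V_*$. Write an element as $(f,h)$ with $f \in \bigoplus G$. If $h\notin H_*$, then $\Phi(h)$ moves $0^\omega$ while the base factors fix a neighbourhood of the orbit, so $(f,h)\notin \Phi^{-1}V_*$. If $h\in H_*$, then only the coordinate at the trivial coset touches $0^\omega$, and it does so through $\phi_G$ conjugated to act at $0^\omega$. This requires choosing the planted copy of $G$ at the basepoint with $0^\omega$ corresponding to $0^\omega$ for $\phi_G$, rather than keeping it in the cone $[1]$. Then $(f,h)$ fixes $0^\omega$ if and only if $f(H_*)\in G_*$. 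This gives exactly the stated subgroup.
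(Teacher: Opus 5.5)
Your reduction to building one $\Phi$ and applying the `if' direction of \Cref{thm:main} matches the paper, and so does the idea of planting a copy of $\phi_G(G)$ on an open set $U\ni 0^\omega$ and conjugating it by $\phi_H(H)$ (this is \Cref{thm:demon}). The gap is the step you flag as the main obstacle.

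\textbf{What $\Phi$ needs.} For $\Phi$ to be well defined you need a \emph{single} open set $U\ni 0^\omega$ with two properties. First, $U\cap U\cdot\phi_H(h)=\emptyset$ for every $h\notin H_*$, so that different coordinates commute. Second, $\phi_H(h_*)$ must commute with the planted copy for every $h_*\in H_*$, so that $\phi_H(h)^{-1}\Phi(g)\phi_H(h)$ depends only on the coset $H_*h$. You never address this second condition.

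\textbf{Why continuity does not give it.} Continuity only gives, for each individual $h\notin H_*$, some neighbourhood of $0^\omega$ that $\phi_H(h)$ moves off itself. There are infinitely many such $h$, and if all you know is $\phi_H^{-1}V_*=H_*$, no uniform $U$ need exist. For example, take $a,b\in V$ such that $a$ has an attracting fixed point $p\neq 0^\omega$ whose basin contains $0^\omega$, and $p\cdot b=0^\omega$. Let $H=\langle a,b\rangle$, let $\phi_H$ be the inclusion and let $H_*=H\cap V_*$. The points $0^\omega\cdot a^kb$ differ from $0^\omega$ but converge to it. So every neighbourhood $U$ of $0^\omega$ meets $U\cdot a^kb$ for large $k$, even though $a^kb\notin H_*$.

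Similarly, an element of $V_*$ may act near $0^\omega$ as $0^kx\mapsto 0^{k'}x$ with $k\neq k'$, so merely fixing $0^\omega$ does not give commutation with the planted copy. Moving the cone to ``a point close to $0^\omega$'' avoids the first problem only by giving up $0^\omega\in U$. As your last paragraph concedes, that breaks the stabiliser computation.

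\textbf{The missing idea: demonstrativity.} This is a property of the particular homomorphism built in the `only if' direction (\Cref{lem:embed} with \Cref{lem:finalstep}), not a consequence of the statement of \Cref{thm:main}. There, the basepoint is an isolated point $\pi\in\Delta^*_0$ of $(\Delta_+)^\omega_0$, and it is blown up to the cone $f(\pi)\,\{0,1\}^\omega$. Its translates are the cones $f(\pi\cdot g)\,\{0,1\}^\omega$, which are pairwise disjoint for distinct orbit points by \Cref{prop:uniquepaths}. Any element fixing $\pi$ acts as the identity on $f(\pi)\,\{0,1\}^\omega$. After conjugating and shrinking as in \Cref{prop:demon}, this yields $U=0^n\{0,1\}^\omega$ with both required properties. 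Planting via $\xi\mapsto 0^n\xi$, which fixes $0^\omega$, then works.

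\textbf{A smaller error in the stabiliser computation.} Your reason that ``the base factors fix a neighbourhood of the orbit'' is incorrect. The factor at $H_*h'$ is supported on $U\cdot\phi_H(h')$, which contains the orbit point $0^\omega\cdot\phi_H(h')$. The correct reason is that $\Phi$ of the base preserves $U$, while $\phi_H(h)$ moves $U$ off itself whenever $h\notin H_*$.
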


After the proof was complete, the author learned that the same results have been proved independently by Bodart, D’Angeli, Perego and Rodaro \cite{independent}. 

\subsection*{Acknowledgements}
The author wishes to thank Henry Wilton for his immense generosity and wisdom throughout the course of this project. The author also wishes to thank Collin Bleak, who first introduced the author to this constantly beautiful, compelling and surprising area of study and Francesco Fournier-Facio, Alex Levine and Aidan Evans for many useful conversations and comments on earlier iterations of this manuscript. The author would finally like to thank Trinity College's Summer Studentship programme for providing generous funding for this project.

\section{Preliminaries}

Throughout, we largely follow the conventions of \cite{wpig} and \cite{trees}.

\begin{definition}[Languages and the Membership Problem] An \emph{alphabet} (for instance, $\Sigma$) is a finite collection of \emph{letters} (for instance, $s$). \emph{Finite words} are finite sequences $\sigma$ of elements of $\Sigma$. The set of all finite words over $\Sigma$ is denoted $\Sigma^*$, which is a monoid under concatenation, with identity $\ve$, the \emph{empty word}. We will abuse notation and identify $\Sigma$ with the subset of $\Sigma^*$ of words of length $1$. For $\sigma', \sigma'' \in \Sigma^*$, if $\sigma = \sigma' \, \sigma''$, we say that $\sigma'$ is a \emph{prefix} of $\sigma$, while $\sigma''$ is a \emph{suffix}. A \emph{language} over $\Sigma$ is a subset of $\Sigma^*$.

We similarly define the set of \emph{right-infinite} words over $\Sigma$, denoted $\Sigma^\omega$, to be the set of all right-infinite sequences of letters in $\Sigma$. The union $\Sigma^* \cup \Sigma^\omega$ is denoted $\Sigma^{\leq \omega}$. For $\sigma \in \Sigma^*$ distinct from the empty word, we denote by $\sigma^\omega$ the right-infinite word which is the concatenation of countably many copies of $\sigma$. There is a natural monoidal left-action of $\Sigma^*$ upon $\Sigma^\omega$, again by concatenation. The \emph{Cantor topology} upon $\Sigma^\omega$ is the topology with basis comprising \emph{prefix cones}, $\sigma \,\Sigma^\omega$ for $\sigma \in \Sigma^*$. 

We say that an alphabet $\Sigma$ is \emph{symmetric} if it is endowed with an involution 
$$\Sigma \longrightarrow \Sigma, \qquad s \longmapsto \overline{s}$$
which extends to an involution 
$$\Sigma^* \longrightarrow \Sigma^*, \qquad s_1 \hdots s_n \longmapsto \overline{s_1 \hdots s_n} = \overline{s_n} \hdots \overline{s_1}$$
for $s_i \in \Sigma$. 

Let $G$ be a finitely generated group with $G_*$ a subgroup of $G$, and let $\Sigma$ be a symmetric alphabet. We call a homomorphism $\rho: \Sigma^* \rightarrow G$ \emph{symmetric} if for every $\sigma \in \Sigma^*$, $\rho(\overline{\sigma}) = \rho(\sigma)^{-1}$. For a symmetric, surjective homomorphism $\rho: \Sigma^* \rightarrow G$, the \emph{membership problem} of $G_*$ in $G$ with respect to $\rho$ is the language $\rho^{-1} \, G_* \subset \Sigma^*$. In the case that $G_*$ is trivial, this is usually referred to as the \emph{word problem} of $G$ with respect to $\rho$.
\end{definition}

We make the following remark. 

\begin{remark}\label{prop:invhom1}
Let $G$ and $H$ be finitely generated groups, and let $\Sigma_G$ and $\Sigma_H$ be finite symmetric alphabets. For a homomorphism $\phi: G \rightarrow H$ and symmetric, surjective monoid homomorphisms $\rho_G: \Sigma_G^* \rightarrow G$ and $\rho_H: \Sigma_H^* \rightarrow H$, there exists a symmetric monoid homomorphism $\psi: \Sigma_G^* \rightarrow \Sigma_H^*$ such that the diagram

\begin{center}
\begin{tikzcd}
	{\Sigma_G^*} & G \\
	{\Sigma_H^*} & H
	\arrow["{\rho_G}", from=1-1, to=1-2]
	\arrow["{\psi}"', from=1-1, to=2-1]
	\arrow["\phi", from=1-2, to=2-2]
	\arrow["{\rho_H}", from=2-1, to=2-2]
\end{tikzcd}
\end{center}
commutes. Moreover, if $H_*$ is a subgroup of $H$ and $G_* = \phi^{-1} \, H_*$, then the membership problem $\rho_G^{-1} G_*$ is the preimage of the membership problem of $\rho_H^{-1} \, H_*$ under $\psi$. It is in this sense that the membership problem for $H_*$ in $H$ is \emph{harder} than that of $G_*$ in $G$ in the sense of Greibach \cite{hardest}, since in order to decide whether a word $\sigma \in \Sigma_H^*$ has $\sigma \in \rho_H^{-1} \, H_*$, it suffices to decide whether $\psi(\sigma) \in \rho_G^{-1} \, G_*$. 
\end{remark}

\begin{definition}[R. Thompson's group $V$]\label{def:thompsv}
A \emph{prefix partition} is a finite language $\mcal{A} \subset \{0, 1\}^*$ such that the cones of the form $\alpha \, \{0, 1\}^\omega$ for $\alpha \in \mcal{A}$ partition $\{0, 1\}^\omega$. The \emph{prefix partition map} induced by a bijection $f:\mcal{A} \rightarrow \mcal{B}$ for $\mcal{A}, \mcal{B} \subset \{0, 1\}^*$ prefix partitions, is the map
$$[f]: \{0, 1\}^\omega \longrightarrow \{0, 1\}^\omega, \qquad \alpha \, \eta \longmapsto f(\alpha) \, \eta$$
for $\alpha \in \mcal{A}$ and $\eta \in \{0, 1\}^\omega$. \emph{R. Thompson's group $V$} is the group of prefix partition maps and acts upon $\{0, 1\}^\omega$ on the right.
\end{definition}

\begin{remark}\label{rem:vprops}
Throughout, we assume several well-known properties of $V$: firstly, that $V$ is a finitely generated group; secondly, that the orbit of $0^\omega$ is the set of words of the form $\alpha \, 0^\omega$ for $\alpha \in \{0, 1\}^*$, which is a dense subset of $\{0, 1\}^\omega$; and finally that every finite direct sum $V \oplus V \oplus \hdots \oplus V$ embeds into $V$. We refer the reader to \cite{cfp} for proofs of these facts.
\end{remark}

\begin{definition}[Automata] An \emph{automaton} is a directed graph $\Gamma$ comprising \emph{states} $X = \vertt \Gamma$ and \emph{transitions} $Y = \edge \Gamma$, in addition to a map 
$$Y \longrightarrow X \times X, \qquad e \longmapsto (\iota(e), \tau(e))$$
sending a unique transition to its \emph{initial} and \emph{terminal states} respectively. We often write that $e$ is a transition \emph{from} $\iota(e)$ \emph{to} $\tau(e)$. In addition, we may wish to have a \emph{labelling} 
$$Y \longrightarrow \Sigma \cup \{\ve\}, \qquad e \longmapsto \ell(e)$$
as well as two sets of \emph{initial} and \emph{terminal states} $\vertti \Gamma, \verttt \Gamma \subset \vertt \Gamma$ respectively, where we usually insist (unless otherwise stated) that $\vertti \Gamma$ is a singleton. An automaton without a labelling is called \emph{unlabelled}, while an automaton whose labels belong entirely to $\Sigma$ is called \emph{proper}. 
We call a proper automaton \emph{symmetric} if $\Sigma$ is a symmetric alphabet and there exists a fixed-point free involution
$$Y \longrightarrow Y, \qquad e \longmapsto \overline{e}$$
such that $\iota(\overline{e}) = \tau(e)$ and $\ell(\overline{e}) = \overline{\ell(e)}$.

A \emph{morphism} of automata $f: \Gamma \rightarrow \Delta$ comprises a pair of functions from transitions to transitions and states to states, which preserves the full structure of the underlying graph, the labelling (where it exists) and sends initial states to initial states and terminal states to terminal states. We define \emph{isomorphisms} of automata in the usual way. 
\end{definition}

\begin{definition}[Stars, Determinism and Transition groups]
Let $v$ a state of an automaton $\Gamma$. The \emph{star} of $v$ in $\Gamma$ is the set of transitions 
$$\st(v, \Gamma) = \{e \in \edge \Gamma: \iota(e) = v\},$$
while for $s \in \Sigma \cup \{\ve\}$, the \emph{$s$-star} of $v$ in $\Gamma$ is the set of transitions
$$\st_s(v, \Gamma) = \{e \in \edge \Gamma: \iota(e) = v,\, \ell(e) = s\}.$$
We call a proper automaton \emph{locally finite} if $\st(v, \Gamma)$ is finite for every state $v$. A map of automata $f: \Gamma \rightarrow \Delta$ yields, for every state $v$ of $\Gamma$, a map
$$f_v: \st(v, \Gamma) \longrightarrow \st(f(v), \Delta).$$
We call $f$ a \emph{covering} if $f_v$ is always a bijection and an \emph{immersion} if $f_v$ is always an injection.

The labelling of a proper automaton $\Gamma$ may be understood as a morphism of automata $\ell: \Gamma \rightarrow |\Sigma|$, where $|\Sigma|$ the unique automaton with 
$$\vertt |\Sigma| = \{*\}, \qquad \edge |\Sigma| = \Sigma.$$
We call a proper automaton \emph{deterministic} if $\ell$ is an immersion and \emph{fully deterministic} if $\ell$ is a covering.

Let $X$ be a set. There is a natural correspondence between right-actions of $\Sigma^*$ upon $X$ and fully deterministic automata $\Gamma$ with $X = \vertt \Gamma$ (up to choice of initial and terminal states), where $\Gamma$ has a transition from $x$ to $x \cdot s$ labelled $s$ for all $x \in X$, $s \in \Sigma$. Moreover, if $\Gamma$ is symmetric, this descends to a group action. The induced subgroup of $\Aut(X)$ from this action is called the \emph{transition group} of $\Gamma$.
\end{definition}

\begin{definition}[Paths and Languages]
Let $\Gamma$ be an automaton. A \emph{finite path} in $\Gamma$ of \emph{length $n$} is a finite sequence $\pi = e_1  \hdots e_n$ of transitions of $\Gamma$ such that $\tau(e_i) = \iota(e_{i + 1})$ for all indices $i = 1, \hdots, n - 1$. If further $\tau(e_n) = \iota(e_1)$, we say that $\pi$ is a \emph{loop}. We define $\iota(\pi) = \iota(e_1)$, $\tau(\pi) = \tau(e_n)$ and 
$$\ell(\pi) = \ell(e_1) \hdots \ell(e_n),$$
and let $\pi\, \pi'$ denote the concatenation of paths $\pi, \pi'$ whenever $\tau(\pi) = \iota(\pi')$. We similarly identify the edges of $\Gamma$ with the set of paths in $\Gamma$ of length $1$, and define \emph{prefixes} and \emph{suffixes} in the same fashion as we did for words. The set of all finite paths $\pi$ in $\Gamma$ is denoted by $\Gamma^*$, while the subset of paths with $\iota(\pi) \in \vertti \Gamma$ is denoted $\Gamma_0^*$. We similarly define \emph{right-infinite paths} in $\Gamma$ as right-infinite sequences $\pi = e_1  e_2 \hdots$ of transitions of $\Gamma$ such that $\tau(e_i) = \iota(e_{i + 1})$ for all $i \geq 1$, and denote the set of all right-infinite paths in $\Gamma$ by $\Gamma^\omega$. We define $\Gamma_0^\omega$, $\Gamma^{\leq \omega}$ and $\Gamma^{\leq \omega}_0$ in a similar fashion to above. The \emph{Cantor topology} upon $\Gamma^\omega_0$ is the topology whose basis comprises \emph{prefix cones}, $\pi \, \Gamma^\omega$, for $\pi \in \Gamma^*_0$.

We call an automaton \emph{accessible} if for every state $v$ of $\Gamma$ there exists a path from a state of $\vertti \Gamma$ to $v$. Henceforth all automata will be considered accessible unless otherwise stated. 

For an automaton $\Gamma$, the (existential) language \emph{accepted} by $\Gamma$ is defined to be
$$\mcal{L}(\Gamma) = \Bigg\{\,
\sigma \in \Sigma^*: 
\begin{aligned}
& \quad \quad \, \textup{there exists a path } \pi \textup{ such that } \\
& \iota(\pi) \in \vertti \Gamma, \ell(\pi) = \sigma \textup{ and } \tau(\pi) \in \verttt \Gamma
\end{aligned}
\Bigg\} \subset \Sigma^*.$$
\end{definition}

A key example is the following. 

\begin{remark}[Schreier automata]\label{rem:schreier}
Let $G$ be a finitely generated group, and let $\rho: \Sigma^* \rightarrow G$ be a surjective monoid homomorphism. Let $G_*$ be a subgroup of $G$. Define the \emph{Schreier automaton} of $G_*$ in $G$ to be the automaton $\Gamma$ whose states are the right cosets $G_* \setminus G$ and whose transitions are taken from $G_* g$ to $G_* (g \, \rho(s))$ with label $s$ for all $g \in G$ and $s \in \Sigma$. Further let $\vertti \Gamma = \verttt \Gamma = \{G_*\}$. The automaton $\Gamma$ is proper, symmetric, accessible and fully deterministic, and $\mcal{L}(\Gamma)$ coincides with the membership problem for $G_*$ in $G$. Moreover, the action of $G$ upon the states of $\Gamma$ induces a surjective homomorphism from $G$ to the transition group of $\Gamma$ with kernel
$$\core(G_*) = \bigcap_{g \in G}g^{-1} \, G_* \, g,$$ 
the \emph{core} of $G_*$ in $G$. Observe that every proper, symmetric, accessible, fully deterministic automaton $\Gamma$ with $\vertti \Gamma = \verttt \Gamma$ is a Schreier automaton.
\end{remark}

We now make the central definition of the paper.

\begin{definition}[Context-free automata]\label{def:cfa}
Let $\Gamma$ be a locally finite, accessible automaton and let $v$ a state of $\Gamma$. Define a metric $d$ upon $\vertt \Gamma$, where $d(v, w)$ is the length of the shortest path from $v$ to $w$, where here we allow paths to be undirected. Let $|v|$ be the length of the shortest path from a state of $\vertti \Gamma$ to $v$. Define $\nabla^n \Gamma$ to be the automaton induced upon the states
$$\vertt \nabla^n \Gamma = \{v \in \vertt \Gamma: |v| \geq n\}$$
with 
$$\vertti \nabla^n \Gamma = \{v \in \vertt \Gamma: |v| = n\}$$
(called \emph{frontier points} in \cite{ms3}) and 
$$\verttt \nabla^n \Gamma = \{v \in \verttt \Gamma: |v| \geq n\}.$$
Henceforth, we write $\nabla = \nabla^1$. 

For a state $v$ with $|v| = n$, the (undirected) connected component of $\nabla^n \Gamma$ containing $v$ is called the \emph{context} of $v$ in $\Gamma$, denoted $\partial^v \Gamma$ (these are referred to as \emph{ends} in \cite{ms2, ms3}, a term we do not use for avoidance of confusion). We call $\Gamma$ \emph{context-free} if there are finitely many contexts up to isomorphism. Languages of the form $\mcal{L}(\Gamma)$ for $\Gamma$ context-free are called \emph{context-free languages}. (We detail in \Cref{section:pda} how this coincides with the usual definition from computer science).
\end{definition}

We have the following classical proposition due to \cite[2.6.34]{cocf}.

\begin{proposition}\label{prop:invhom2}
Let $\mcal{L} \subset \Sigma_H^*$ be a context-free language and let $\psi: \Sigma_G^* \rightarrow \Sigma_H^*$ be a monoid homomorphism. Then $\psi^{-1}\,\mcal{L} \subset \Sigma_G^*$ is a context-free language. 
\end{proposition}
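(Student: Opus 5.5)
The plan is to prove this by passing through pushdown automata rather than working directly with contexts. The equivalence that \Cref{section:pda} promises says that languages of the form $\mcal{L}(\Gamma)$ with $\Gamma$ context-free (in the sense of \Cref{def:cfa}) are exactly the languages accepted by pushdown automata. Inverse homomorphisms are much easier to handle on the pushdown side. So I would: translate $\mcal{L}$ into a pushdown automaton, apply the classical buffer construction for $\psi^{-1}$, and translate back.

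\textbf{Translate to a pushdown automaton.} Let $\Gamma$ be context-free with $\mcal{L}(\Gamma) = \mcal{L}$. Using the finitely many context types, I would build a pushdown automaton $P$ accepting $\mcal{L}$. Its control states record the isomorphism type of the current context together with the position inside a bounded neighbourhood. Its stack records the sequence of contexts entered, following the Muller--Schupp decomposition of $\Gamma$ into nested contexts.

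\textbf{Buffer construction.} Let $k = \max_{s \in \Sigma_G} |\psi(s)|$, which is finite since $\Sigma_G$ is finite. Build a pushdown automaton $P'$ over $\Sigma_G$ whose control states are pairs $(q, \beta)$, where $q$ is a control state of $P$ and $\beta$ is a suffix of some $\psi(s)$, so $|\beta| \leq k$.
\begin{itemize}
\item When $\beta = \ve$, $P'$ reads a letter $s \in \Sigma_G$ and sets $\beta = \psi(s)$.
\item When $\beta \neq \ve$, $P'$ makes $\ve$-moves simulating $P$ on the first letter of $\beta$ and deletes that letter.
\item $P'$ may also simulate the $\ve$-moves of $P$ at any time.
\end{itemize}
Accepting states are the pairs $(q, \ve)$ with $q$ accepting in $P$. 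There are finitely many control states, and a direct induction on the length of the input shows that $P'$ accepts $\sigma$ if and only if $P$ accepts $\psi(\sigma)$. Hence $P'$ accepts exactly $\psi^{-1}\mcal{L}$.

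\textbf{Translate back.} Take the configuration graph of $P'$ restricted to reachable configurations. Its states are pairs (control state, stack word), with transitions labelled by $\Sigma_G \cup \{\ve\}$. This graph is locally finite, because each configuration admits only finitely many moves. It is also context-free: the context of a configuration $(q,\gamma)$ is determined, up to isomorphism, by $q$ and the top stack symbol of $\gamma$, since the part of the graph beyond $(q,\gamma)$ depends only on these. There are finitely many such pairs. So $\psi^{-1}\mcal{L}$ is accepted by a context-free automaton.

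\textbf{Main obstacle.} The hard part is the two translations, in particular this last one. There is a technical mismatch with \Cref{def:cfa}: contexts are cut out by the exact distance spheres $|v| = n$, whereas stack height is only coarsely comparable to $|v|$. So "one context per (state, top symbol)" needs care. I would first try to normalise $P'$ so that every move changes the stack height by exactly one. Then graph distance from the initial configuration is controlled by stack height up to a bounded error, and the finitely many possible bounded perturbations still yield only finitely many contexts.

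An attempted direct construction on $\Gamma$ — adding to each state $v$ the finite buffer states $(v,s,i)$ — runs into the same distortion of $|\cdot|$. That is why I would route the argument through pushdown automata, where the standard theory (as cited in \cite{cocf}) absorbs it.
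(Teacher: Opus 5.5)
The paper gives no argument here. It simply cites the classical closure of context-free languages under inverse homomorphisms \cite[2.6.34]{cocf}, and relies on the Muller--Schupp equivalence (\Cref{thm:ms3}) to identify the languages of \Cref{def:cfa} with pushdown languages. Your overall route (translate to a pushdown automaton, run the buffer construction, translate back) is exactly that classical argument. Your first step is essentially \Cref{lem:confreemain} with stack alphabet $\edge \Lambda$, and the buffer construction is the standard one; both are fine.

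The gap is in your justification of the last step. A context is a connected component of $\nabla^n$ of the configuration graph, not ``the part of the graph beyond $(q,\gamma)$''. It can contain several frontier configurations with different control states, so it is not determined by $q$ and the top symbol of $\gamma$.

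For instance, take a pushdown automaton with the following moves:
from $(\ve,p)$ it may push $a$ and move to $q$, push $a$ and move to $q'$, or push $b$ and move to $s$;
from $s$ with top $b$ it pushes $a$ and moves to $q$;
from $q$ or $q'$ with top $a$ it pushes $c$ and moves to $r$.
Then the context of $(a,q)$ is $\{(a,q),(a,q'),(ac,r)\}$, with two frontier points. The context of $(ba,q)$ is $\{(ba,q),(bac,r)\}$, with one. Yet both configurations have control state $q$ and top symbol $a$.

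Your proposed repair does not work either. Even if every move changes the stack height by exactly one, $|v|$ need not equal stack height up to bounded error. Take an automaton forced to follow each push of $a$ by pushing and then popping an auxiliary symbol. Its configuration graph is a ray, and $(a^k,p)$ sits at distance $3k$. And even with good control of distance, you would still have to bound the number of frontier points of a component and their distance offsets. That is the real content of the Muller--Schupp argument, and your sketch does not supply it.

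The fix is to invoke \Cref{thm:ms3}, which the paper states in \Cref{section:pda}: the accessible part of any pushdown automaton is isomorphic to a context-free automaton. Applied to your $P'$, this shows $\psi^{-1}\mcal{L}$ is context-free in the sense of \Cref{def:cfa}. With that citation in place of your direct argument, your proof coincides with the classical one the paper cites.
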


We then have the following results upon context-free membership problems.

\begin{proposition}\label{prop:pullback}
Let $G, H$ be finitely generated groups with subgroups $G_*, H_*$ and let $\rho_G, \, \rho_H,\,  \phi$ and $\psi$ be as in \Cref{prop:invhom1}. Suppose the membership problem of $H_*$ in $H$ is context-free. Then the membership problem of $G_*$ in $G$ is context-free. In particular, the membership problem of $G_*$ in $G$ being a context-free language is independent of the choice of symmetric alphabet $\Sigma$ and surjective monoid homomorphism $\rho: \Sigma^* \rightarrow G$.
\end{proposition}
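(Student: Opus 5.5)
The plan is to combine \Cref{prop:invhom1} with \Cref{prop:invhom2}; no new construction should be needed. Fix symmetric, surjective homomorphisms $\rho_G: \Sigma_G^* \rightarrow G$ and $\rho_H: \Sigma_H^* \rightarrow H$. Suppose $\rho_H^{-1} H_*$ is context-free. By \Cref{prop:invhom1} there is a symmetric monoid homomorphism $\psi: \Sigma_G^* \rightarrow \Sigma_H^*$ with $\rho_H \circ \psi = \phi \circ \rho_G$, where $G_* = \phi^{-1} H_*$ as in the hypothesis inherited from \Cref{prop:invhom1}.

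The key identity to verify is
$$\rho_G^{-1} G_* = \psi^{-1}\big(\rho_H^{-1} H_*\big).$$
For $\sigma \in \Sigma_G^*$ we have $\sigma \in \rho_G^{-1} G_*$ if and only if $\rho_G(\sigma) \in \phi^{-1} H_*$, that is, if and only if $\phi(\rho_G(\sigma)) \in H_*$. By commutativity of the square this holds if and only if $\rho_H(\psi(\sigma)) \in H_*$, i.e.\ $\psi(\sigma) \in \rho_H^{-1} H_*$. \Cref{prop:invhom2} then shows that the membership problem of $G_*$ in $G$ is context-free.

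For the independence statement, take $H = G$, $H_* = G_*$ and $\phi = \mathrm{id}_G$. Here $\rho_G$ and $\rho_H$ are two arbitrary choices of symmetric alphabet and surjective homomorphism for the same group. The argument above shows that if the membership problem is context-free with respect to $\rho_H$, then it is context-free with respect to $\rho_G$. Exchanging the roles of the two choices gives the converse, so the property does not depend on the choice.

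The only point that might need care is that \Cref{prop:invhom1} is stated as a remark, so the existence of $\psi$ is not proved there. If it must be justified, I would proceed as follows. For each letter $s \in \Sigma_G$, surjectivity of $\rho_H$ lets us choose a word $w_s \in \Sigma_H^*$ with $\rho_H(w_s) = \phi(\rho_G(s))$. We choose these words compatibly with the involution: pick $w_s$ for one letter of each pair $\{s, \overline{s}\}$ and set $w_{\overline{s}} = \overline{w_s}$. Symmetry of $\rho_G$ and $\rho_H$ then gives $\rho_H(w_{\overline{s}}) = \rho_H(w_s)^{-1} = \phi(\rho_G(\overline{s}))$. A fixed letter $s = \overline{s}$ satisfies $\rho_G(s)^2 = 1$; for such a letter I would take $w_s = w\,\overline{w}'$ for suitable words, or simply note that symmetry of $\psi$ is never used in the argument above, only the commutativity of the square. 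I therefore expect no genuine obstacle.
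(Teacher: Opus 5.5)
Your proposal is correct and is exactly the paper's argument: the paper's proof just cites \Cref{prop:invhom1} and \Cref{prop:invhom2}, and you have written out the identity $\rho_G^{-1} G_* = \psi^{-1}(\rho_H^{-1} H_*)$ and the $\phi = \mathrm{id}_G$ case explicitly. Of your two suggestions for self-inverse letters, keep the second one. A word $w$ with $\overline{w} = w$ can only represent the identity or a conjugate of the image of a self-inverse letter of $\Sigma_H$, so a symmetric $\psi$ need not exist in general. However, \Cref{prop:invhom2} only requires $\psi$ to be a monoid homomorphism, so nothing is lost.
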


\begin{proof}
Immediate from \Cref{prop:invhom1} and \Cref{prop:invhom2}.
\end{proof}

In particular, we may suppress any mention of $\rho: \Sigma^* \rightarrow G$ and simply call $G_*$ a \emph{context-free subgroup} of $G$. 

We have the following result of \cite[Corollary 4.10]{silberstein}.

\begin{proposition}\label{lem:Schreier}
Let $G$ be a finitely generated group and $G_*$ a subgroup of $G$. Let $\Sigma$ be a symmetric alphabet, and let $\rho: \Sigma^* \rightarrow G$ be a symmetric, surjective homomorphism. Then $G_*$ is a context-free subgroup of $G$ if and only if the Schreier automaton $\Gamma$ of $G_*$ in $G$ is context-free with respect to $\rho$.
\end{proposition}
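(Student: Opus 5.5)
Both directions pass through the Schreier automaton $\Gamma$ of $G_*$ in $G$. Since it is proper, symmetric, fully deterministic and accessible, with $\mcal{L}(\Gamma)=\rho^{-1}G_*$, the task is to show that $\Gamma$ has finitely many contexts up to isomorphism exactly when $\rho^{-1}G_*$ is context-free in the classical sense.

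\textbf{Easy direction.} Suppose $\Gamma$ is context-free in the sense of \Cref{def:cfa}. Then $\rho^{-1}G_*=\mcal{L}(\Gamma)$ is context-free by definition. Modulo the identification with the pushdown-automaton definition promised in \Cref{section:pda}, this is the classical content of \cite{ms3}. The Muller--Schupp argument gives it: contexts of bounded type can be encoded on a stack, and descending into a subcontext pushes its isomorphism type. No further work is needed.

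\textbf{Hard direction.} Suppose $\mcal{L}=\rho^{-1}G_*$ is accepted by a pushdown automaton. I would follow the strategy of Muller--Schupp and of Ceccherini-Silberstein--Woess.

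\emph{Step 1: bound the size of frontier pieces.} Choose a context-free grammar for $\mcal{L}$ in Chomsky normal form with constant $K$. Let $u,v$ be states of $\Gamma$ with $|u|=|v|=n$ lying in the same component of $\nabla^n\Gamma$. Choose geodesic words $\alpha,\beta$ from $G_*$ to $u$ and $v$, and a path $\gamma$ from $u$ to $v$ inside $\nabla^n\Gamma$. Then $\alpha\gamma\overline{\beta}\in\mcal{L}$, and every prefix of $\gamma$ stays at distance at least $n$ from the base point. Triangulating a derivation tree of this word, and using that symmetric cancellation and geodesicity force short chords, gives $d(u,v)\le C$ for a constant $C$ independent of $n$. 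Consequently each context meets level $n$ in a set of uniformly bounded diameter.

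\emph{Step 2: contexts are determined by finite data.} Because $\Gamma$ is deterministic, a context $\partial^v\Gamma$ is determined by two pieces of data. The first is the finite labelled ball of radius $C$ around its frontier points. The second is the set of words $\sigma$ readable from a frontier point that end in a terminal state, with the path staying inside $\nabla^n\Gamma$. Equivalently, the context is determined by the right residual $\alpha^{-1}\mcal{L}$ restricted to words that never return to level below $n$. A finite-state argument on the pushdown stack shows that only finitely many such restricted residuals occur: the stack content under the frontier is never read while we stay outside the ball. Combined with the uniform diameter bound, this yields finitely many isomorphism types of contexts.

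\emph{Expected obstacle.} The main difficulty is Step 1. In the Cayley-graph case (trivial $G_*$) one uses that any loop triangulates into triangles of bounded perimeter. For Schreier graphs the relevant closed paths are only loops in $\Gamma$, not relators in $G$. The grammar decomposition must therefore be carried out in $\Gamma$ itself, checking that each nonterminal corresponds to a path whose endpoints are connected by a short path. I would adapt the proof of \cite[Theorem 2.9]{ms3} verbatim, replacing the Cayley graph by $\Gamma$. Since \Cref{lem:Schreier} is attributed to \cite[Corollary 4.10]{silberstein}, one may ultimately cite that paper for the details.
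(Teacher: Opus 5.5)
The paper gives no argument for this proposition; it is quoted from \cite[Corollary 4.10]{silberstein}. Your closing sentence therefore coincides with the paper's treatment.

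Your easy direction is fine, and in the paper's conventions it is immediate: $\mcal{L}(\Gamma)=\rho^{-1}G_*$, and context-free languages are \emph{defined} as languages of context-free automata.

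Step 1 is also fine, and it is not where the difficulty lies. Suppose a node labelled $A$ in a derivation tree of $\alpha\gamma\overline{\beta}\in\mcal{L}$ spans a subword $u$ read from the vertex $p$. Replacing that subtree by a derivation of a fixed shortest word $w_A$ derived from $A$ keeps the word in $\mcal{L}$. Since letters act as bijections on the states of $\Gamma$, the paths from $p$ labelled $u$ and $w_A$ then end at the same vertex. So every chord has length at most $K$, and the usual argument (a triangle meeting all three sides of the polygon) gives a uniform frontier bound, verbatim as for Cayley graphs.

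The genuine gap is Step 2, which carries the whole content of the hard direction.

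First, the data you name are vacuous. In a Schreier automaton $\verttt\Gamma=\{G_*\}$ lies at level $0$. So for $n\geq 1$ no path staying in $\nabla^n\Gamma$ ends at a terminal state, and your restricted residual of $\alpha^{-1}\mcal{L}$ is empty for every context. What is left, a bounded ball around the frontier, does not determine the context. For example, take a bi-infinite $a$-line with a $b$-loop at each $i\in S$, and at each $i\notin S$ a $b$-edge to a pendant vertex fixed by $a$. This is the Schreier automaton of a subgroup of $\mathbb{Z}*\mathbb{Z}/2$. All its frontiers have size one and it has finitely many ball types, yet it has infinitely many context types when $S\cap\mathbb{N}$ is not ultimately periodic. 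By determinism, the invariant that does determine a context $C$ is the family of languages of labels of paths \emph{inside} $C$ between pairs of its frontier points.

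Second, finiteness of these invariants is only asserted. Nothing ties the stack of an arbitrary nondeterministic pushdown automaton for $\mcal{L}$ to levels in $\Gamma$. Along a path that stays far from the base point, the machine may still pop symbols pushed while reading $\alpha$, and context-free languages generally have infinitely many residuals. A pushdown description synchronised with the level structure of $\Gamma$ is precisely what has to be proved. For Cayley graphs one can exploit homogeneity (Muller and Schupp go through virtual freeness), but Schreier graphs have none. This is exactly what \cite{silberstein} supplies. Without that citation, your sketch does not show that a context-free membership problem forces a context-free Schreier automaton.
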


In particular, this shows that $G_*$ has context-free membership problem in $G$ if and only if it has \emph{deterministic} context-free membership problem in $G$. Moreover, per \Cref{rem:schreier}, $G$ is the transition group of a finite union of context-free automata if and only if there exist finitely many context-free subgroups $G_1, \hdots, G_n$ of $G$ such that 
$$\bigcap_{i = 1}^n \core(G_i) = 1,$$
that is, if the intersection of the cores of the groups $G_1, \hdots, G_n$ is trivial.

We also have the following result due to \cite[Proposition 4.17]{cftr}.

\begin{proposition}\label{lem:cftrsub}
Suppose $G$ is the transition group of a finite union of context-free automata and suppose $H$ is a subgroup of $G$. Then $H$ is the transition group of a finite union of context-free automata. 
\end{proposition}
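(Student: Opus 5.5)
The plan is to realise $H$ as acting faithfully on finitely many of the $H$-orbits inside the automata for $G$, and to show each such orbit is the Schreier automaton of a context-free subgroup of $H$. Throughout I take $H$ finitely generated (needed for the definitions to apply). By \Cref{rem:schreier} and the remark after \Cref{lem:Schreier}, the hypothesis gives context-free subgroups $G_1, \hdots, G_n$ of $G$ with $\bigcap_i \core(G_i) = 1$, with Schreier automata $\Gamma_1, \hdots, \Gamma_n$. It therefore suffices to find finitely many context-free subgroups $H_1, \hdots, H_m$ of $H$ whose cores in $H$ intersect trivially.

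The natural candidates are the point stabilisers of $H$ acting on the states of the $\Gamma_i$, that is $H \cap g^{-1} G_i g$ for $g \in G$. I would argue in two steps.

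\textbf{Step 1.} Each $g^{-1} G_i g$ is context-free in $G$. Its Schreier automaton is $\Gamma_i$ with the initial state moved to $G_i g$. Moving the root a bounded distance $R$ changes each context only by finitely many possibilities: every context for the new root is obtained from contexts for the old root at depth within $R$ by cutting along finitely many frontier vertices. Hence finiteness of context types is preserved.

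\textbf{Step 2.} Apply \Cref{prop:pullback} to the inclusion $H \hookrightarrow G$. Since $H \cap g^{-1} G_i g$ is the preimage of $g^{-1}G_i g$, it is a context-free subgroup of $H$.

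For triviality of cores, note that the kernel of the action of $H$ on $\bigsqcup_i \vertt \Gamma_i$ is
$$H \cap \bigcap_i \core(G_i) = 1,$$
and this kernel equals $\bigcap_{i,g} \core_H(H \cap g^{-1}G_i g)$. Each $H$-orbit contributes one such core, so the cores of all orbit stabilisers intersect trivially.

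The main obstacle is that there may be infinitely many $H$-orbits, so I must show that finitely many orbits already carry a faithful action. I would try to prove that there is a constant $D$ such that every $1 \neq h \in H$ moves some state lying in an $H$-orbit that meets the $D$-ball about the roots. The idea is to exploit context-freeness.

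Suppose $h$, written as a word of length $N$, moves a state $v$ far from the root. Then the path labelled by $h$ from $v$ stays inside the context at an ancestor $u$ of $v$ with $|u| \ge |v| - N$. The action of $h$ near $v$ is determined by the isomorphism type of that context and the position of $v$ in it. Replacing $u$ by a representative of the same context type at bounded depth $K$, one obtains a state $v'$ moved by $h$.

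The worry is that $|v'|$ may still grow with $N$. To fix this I would instead use the Muller--Schupp tree-like decomposition, in which contexts of each type have only finitely many child-context types. The aim is to show that, for each pair of context types, the $H$-orbits of corresponding vertices stabilise after bounded depth. Concretely, I would use the translation action of $H$ along the context tree: it carries any deep vertex into one of the finitely many orbits through a fixed finite ball, because there are only finitely many (context type, relative position) pairs modulo $H$. If this stabilisation fails, my fallback is to invoke the converse direction of \Cref{cor:main} to embed $G$ into $V$, hence $H$ into $V$, and then pull back $V_*$ and its finitely many conjugates via \Cref{thm:main}.
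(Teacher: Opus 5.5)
The paper does not prove this proposition itself; it imports it from \cite[Proposition 4.17]{cftr}, so your argument has to stand on its own. Your reduction is sound. Steps 1 and 2 correctly show that each $H$-orbit in $\bigsqcup_i \vertt \Gamma_i$ is the Schreier automaton of a context-free subgroup $H \cap g^{-1} G_i g$ of $H$, and your kernel identity is right. The gap is the step you flag yourself: you never establish that finitely many $H$-orbits carry a faithful action.

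The mechanism you propose for that step is false. $H$ does not carry deep vertices into finitely many orbits through a fixed ball. Take $G = F(a,b)$ and $G_* = 1$, so that $\Gamma$ is the Cayley tree, and let $H = \langle a \rangle$. The orbits $b^n \langle a \rangle$ are infinitely many, and the $n$-th one avoids the ball of radius $n-1$. Nor is there a ``translation action of $H$ along the context tree'', since $H$ does not preserve levels. Your first attempt gives a depth bound that grows with the word length of $h$, as you note. The fallback only moves the problem: embedding $G$ into $V$ is legitimate, but choosing finitely many conjugates of $V_*$ whose pullbacks to $H$ have trivially intersecting cores is the same finiteness question for $H$ acting on the $V$-orbit of $0^\omega$. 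Moreover, the paper deduces the corresponding direction of \Cref{cor:main} from this very proposition, so you cannot cite it here.

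The missing idea is to replace \emph{equality} of orbits by \emph{isomorphism} of orbits, and to bound depth using the generators of $H$ rather than arbitrary elements $h$. Let $H$ be generated by words $w_1, \hdots, w_k$ of length at most $L$. Let $C$ be an $H$-orbit in $\Gamma_i$ with $n = \min_{x \in C} |x| \geq L$.

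\begin{enumerate}[label=(\roman*)]
\item The $\Gamma_i$-paths labelled $w_j^{\pm 1}$ starting at points of $C$ stay at level $\geq n - L$, and together they connect $C$. Hence $C$ lies in a single context $\partial^u \Gamma_i$ with $|u| = n - L$.
\item By full determinism, any isomorphism of contexts $\partial^u \Gamma_i \to \partial^{u'} \Gamma_i$ sends these paths to the unique paths with the same labels. It therefore restricts to an $H$-equivariant bijection from $C$ onto an $H$-orbit.
\item Consequently the isomorphism type of $C$ as an $H$-set is determined by the type of $\partial^u \Gamma_i$ together with the chart image of a point of $C$ at relative depth $L$. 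There are only finitely many such pairs.
\item Orbits with $n < L$ meet a finite ball, so there are finitely many of them.
\end{enumerate}

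Isomorphic $H$-sets have the same kernel. So one representative orbit per isomorphism type yields finitely many context-free subgroups of $H$ whose cores intersect trivially. This also gives the constant $D$ you were after: $L$ plus the maximum, over context types, of the least depth at which that type occurs.
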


\section{Pseudogroups and topological full groups}

\begin{definition}[Pseudogroups]
A \emph{pseudogroup} (following \cite[Def 3.1.1]{thurston}) upon a topological space $X$ is a set $\mcal{G}$ of homeomorphisms between open sets of $X$ satisfying the following:

\begin{enumerate}[label=(\roman*)]
\item the domains of the elements $g \in \mcal{G}$ cover $X$;
\item the restriction of an element $g \in \mcal{G}$ to any open set contained in its domain is also in $\mcal{G}$;
\item the composition $g \cdot g'$ of two elements in $\mcal{G}$, where defined, is in $\mcal{G}$;
\item the inverse of an element of $\mcal{G}$ is in $\mcal{G}$;
\item if $g: U \rightarrow V$ is a homeomorphism between open sets of $X$ and $U$ is covered by open sets $U_\alpha$ such that each restriction $g|_{U_\alpha}$ is in $\mcal{G}$, then $g \in \mcal{G}$.
\end{enumerate}
Observe that $\mcal{G}$ contains the identity map upon every open set of $X$. Throughout we insist that pseudogroups act upon the right. For a collection $\mcal{X}$ of homeomorphisms between open sets of $X$, the smallest pseudogroup containing $\mcal{X}$ is called the pseudogroup \emph{generated} by $\mcal{X}$. 
\end{definition}

Our main motivating example will be the following. For the remainder of this section, $\Delta$ will denote a finite unlabelled automaton.

\begin{definition}[R. Thompson's pseudogroups]
A \emph{prefix map} on $\Delta^\omega_0$ is a homeomorphism of the form
$$\pi' \, \Delta^\omega \longrightarrow \pi'' \, \Delta^\omega, \qquad \pi' \, \xi \longmapsto \pi'' \, \xi$$
for $\pi', \pi'' \in \Delta_0^*$ and $\xi \in \Delta^\omega$ with $\tau(\pi') = \tau(\pi'') = \iota(\xi)$. \emph{R. Thompson's $\Delta$ pseudogroup}, denoted $\mcal{G}(\Delta)$, is the pseudogroup generated by prefix maps.
\end{definition}

\begin{definition}[Topological full groups]\label{def:thompsongroup}
The \emph{topological full group} of $\mcal{G}$, denoted $[\mcal{G}]$, is the subgroup of $\Homeo(X)$ comprising homeomorphisms $g: X \rightarrow X$ such that, for all $x \in X$, there exists an open neighbourhood $U$ of $x$ such that $g|_U \in \mcal{G}$. It can be seen that this is indeed a group. \emph{R. Thompson's $\Delta$ group} is the topological full group $[ \mcal{G}(\Delta)]$. 
\end{definition}

We justify our choice of notation in the following.

\begin{proposition}\label{prop:vsagree}
For $\Delta = |\{0, 1\}|$, $[ \mcal{G}(\Delta) ] = V$ with respect to the action upon $\Delta^\omega_0 = \{0, 1\}^\omega$. 
\end{proposition}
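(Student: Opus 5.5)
We prove the two inclusions separately. Throughout, $\Delta = |\{0,1\}|$ has a single state $*$, which is initial, and two loops labelled $0$ and $1$. A path in $\Delta$ is therefore the same thing as a word over $\{0,1\}$: we have $\Delta_0^* = \{0,1\}^*$ and $\Delta_0^\omega = \Delta^\omega = \{0,1\}^\omega$. Under this identification, the prefix maps of $\Delta$ are exactly the maps
$$\alpha\{0,1\}^\omega \to \beta\{0,1\}^\omega, \qquad \alpha\eta \mapsto \beta\eta,$$
for $\alpha, \beta \in \{0,1\}^*$.

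For $V \subset [\mcal{G}(\Delta)]$, take $[f] \in V$ induced by a bijection $f: \mcal{A} \to \mcal{B}$ of prefix partitions. Each point $x$ lies in exactly one cone $\alpha\{0,1\}^\omega$ with $\alpha \in \mcal{A}$. This cone is an open neighbourhood of $x$, and on it $[f]$ restricts to the prefix map $\alpha\eta \mapsto f(\alpha)\eta$. This restriction lies in $\mcal{G}(\Delta)$. Finally, $[f]$ is a homeomorphism, since it is a bijection that is piecewise a homeomorphism on a finite clopen partition.

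For $[\mcal{G}(\Delta)] \subset V$, the key step is a normal form for the germs of elements of the pseudogroup. We claim that every $g \in \mcal{G}(\Delta)$ is locally a prefix map: each point of the domain of $g$ has a cone neighbourhood on which $g$ restricts to a prefix map. To prove this, let $\mcal{P}$ be the set of homeomorphisms between open sets that are locally prefix maps. It suffices to check that $\mcal{P}$ is a pseudogroup containing the prefix maps, since then minimality of $\mcal{G}(\Delta)$ gives $\mcal{G}(\Delta) \subset \mcal{P}$. We check the axioms in turn.
\begin{enumerate}[label=(\roman*)]
\item Covering holds, using the identity map on $\{0,1\}^\omega$, which is the prefix map with $\alpha = \beta = \varepsilon$.
\item Restriction to an open subset holds, since a restriction of a prefix map to a smaller cone $\alpha\gamma\{0,1\}^\omega$ is again a prefix map $\alpha\gamma\eta \mapsto \beta\gamma\eta$, and cones form a basis.
\item Composition holds, by refining cones: if $x \mapsto \beta\eta$ via $\alpha \to \beta$ locally, and near $g(x)$ the next map is $\gamma \to \delta$, then either $\beta$ is a prefix of $\gamma$ or vice versa. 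Passing to the longer of the two gives a composite prefix map.
\item Inverses hold, since the inverse of $\alpha \to \beta$ is $\beta \to \alpha$.
\item The gluing axiom holds by the definition of "locally".
\end{enumerate}

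Now let $g \in [\mcal{G}(\Delta)]$. By definition, and by the claim, every $x$ has a cone neighbourhood $\alpha_x\{0,1\}^\omega$ on which $g$ is a prefix map $\alpha_x\eta \mapsto \beta_x\eta$. By compactness, finitely many such cones cover $\{0,1\}^\omega$. Any two cones are either disjoint or nested, so we may discard the ones that are contained in others. The remaining finite set $\mcal{A}$ of prefixes gives a partition, so it is a prefix partition.

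The main obstacle is showing that the images $\mcal{B} = \{\beta_\alpha : \alpha \in \mcal{A}\}$ also form a prefix partition.
\begin{itemize}
\item Since $g$ is surjective, the image cones $\beta_\alpha\{0,1\}^\omega = g(\alpha\{0,1\}^\omega)$ cover $\{0,1\}^\omega$.
\item Since $g$ is injective, these image cones are pairwise disjoint.
\end{itemize}
Hence $\mcal{B}$ is a prefix partition, $\alpha \mapsto \beta_\alpha$ is a bijection $\mcal{A} \to \mcal{B}$, and $g = [f]$ for this bijection $f$. Therefore $g \in V$. This also shows that the action on $\Delta_0^\omega$ matches the right action on $\{0,1\}^\omega$ from \Cref{def:thompsv}.
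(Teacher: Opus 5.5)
Your proof is correct and follows the same route as the paper: the easy inclusion $V \subset [\mcal{G}(\Delta)]$, then compactness together with the nested-or-disjoint property of cones to extract a finite prefix partition on whose cones $g$ is a prefix map. You also justify two points the paper leaves implicit. First, every element of $\mcal{G}(\Delta)$ is locally a prefix map, which you show by checking that locally-prefix maps form a pseudogroup. Second, the image cones form a prefix partition, which follows from bijectivity of $g$.
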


\begin{proof}
Clearly all elements of $V$ belong to $[ \mcal{G}(\Delta)]$ for $\Delta = |\{0, 1\}|$. To see that all elements of $[ \mcal{G}(\Delta) ]$ belong to $V$, suppose $g \in [ \mcal{G}(\Delta) ]$. Then for every $\xi \in \{0, 1\}^\omega$, there exists an open neighbourhood $U_\xi$ of $\sigma$ such that $g|_{U_\xi}$ is a restriction of a prefix map. Moreover, there exists a cone $\alpha_\xi \, \{0, 1\}^\omega \subset U_\xi$ containing $\xi$. By the compactness of $\{0, 1\}^\omega$, there exists a finite number of elements $\xi_i \in \{0, 1\}^\omega$ for $i = 1, \hdots, n$ such that the cones $\alpha_{\xi_i} \, \{0, 1\}^\omega$ cover $\{0, 1\}^\omega$. Observe that every pair of cones are either disjoint, or contained in one another. Therefore, the collection of cones $\alpha_{\xi_i} \, \{0, 1\}^\omega$ which are maximal up to inclusion partitions $\{0, 1\}^\omega$, and with respect to this partition, $g$ is a prefix partition map. Therefore these groups coincide.
\end{proof}

We now introduce a useful class of automata.

\begin{definition}[Augmented automata] The \emph{augmented automaton} $\Delta_+$ is the automaton with 
$$\vertt \Delta_+ = \vertt\Delta \sqcup \{+\},\quad \vertti \Delta_+ = \vertti \Delta, \quad \verttt \Delta_+ = \{+\}$$
and with $\edge \Delta_+$ comprising $\edge \Delta$ in addition to a transition $e_v$ from each state $v$ of $\Delta_+$ to $+$. Observe that every path in $(\Delta_+)^\omega_0$ either belongs to $\Delta_0^\omega$, or can be uniquely written in the form $\pi \, e_v \, (e_+)^\omega$ for $\pi \in \Delta^*_0$ and $\tau(\pi) = v$ (that is, every path either never reaches $+$, or does so and remains there indefinitely).
\end{definition}

\begin{proposition}\label{prop:density}
The map 
$$\Delta_0^* \longhookrightarrow (\Delta_+)_0^\omega, \qquad \pi \longmapsto \pi \, e_v \, (e_+)^\omega$$
for all $\pi \in \Delta_0^*$ where $v = \tau(\pi)$, has dense image all of whose points are isolated and which is preserved by the action of $[ \mcal{G}(\Delta_+) ]$.
\end{proposition}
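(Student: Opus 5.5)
We prove the three claims separately: injectivity together with isolation, then density, then invariance. Write $\theta(\pi) = \pi \, e_v \, (e_+)^\omega$ with $v = \tau(\pi)$.

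\emph{Injectivity and isolation.} By the unique decomposition noted in the definition of augmented automata, every path in $(\Delta_+)^\omega_0$ that reaches $+$ is uniquely of the form $\pi \, e_v \, (e_+)^\omega$ with $\pi \in \Delta_0^*$. Hence $\theta$ is injective. For isolation, look at the prefix cone $\pi \, e_v \, (\Delta_+)^\omega$. Once a path is at $+$, the only available transition is $e_+$, so the only infinite path starting at $+$ is $(e_+)^\omega$. The cone is therefore the singleton $\{\theta(\pi)\}$, which is open.

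\emph{Density.} It suffices to meet every basic cone $\pi' \, (\Delta_+)^\omega$ with $\pi' \in (\Delta_+)^*_0$. There are two cases.
\begin{enumerate}[label=(\roman*)]
\item If $\pi'$ never visits $+$, then $\pi' \in \Delta_0^*$, and $\theta(\pi')$ lies in the cone.
\item If $\pi'$ visits $+$, then $\pi' = \pi \, e_v \, (e_+)^k$ for some $\pi \in \Delta_0^*$ and $k \geq 0$, and $\theta(\pi)$ lies in the cone.
\end{enumerate}

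\emph{Invariance.} Let $g \in [\mcal{G}(\Delta_+)]$ and $x = \theta(\pi)$. By definition, some open neighbourhood $U$ of $x$ has $g|_U \in \mcal{G}(\Delta_+)$. The main obstacle is turning this into a single prefix map near $x$. Elements of the generated pseudogroup are exactly the homeomorphisms between open sets that are locally restrictions of prefix maps. This holds because prefix maps compose and invert, on a suitable subcone, to prefix maps: the composite of $\pi' \xi \mapsto \pi'' \xi$ and $\rho' \xi \mapsto \rho'' \xi$ is again a prefix map on the subcone where $\pi''$ and $\rho'$ are comparable. Since the set of local restrictions of prefix maps is closed under all the pseudogroup axioms, it equals $\mcal{G}(\Delta_+)$.

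Consequently there are a cone $\pi' \, (\Delta_+)^\omega \ni x$ and some $\pi''$ with $\tau(\pi'') = \tau(\pi')$ such that $g$ acts on this cone by $\pi' \xi \mapsto \pi'' \xi$. Write $x = \pi' \xi$. Since $x$ is eventually in $+$, the tail $\xi$ is eventually $(e_+)^\omega$, so $xg = \pi'' \xi$ is eventually $(e_+)^\omega$ as well. By the decomposition above, $xg$ lies in the image of $\theta$. The same argument applied to $g^{-1}$ gives the reverse inclusion, so $g$ preserves the image exactly.
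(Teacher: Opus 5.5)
Your proof is correct and follows essentially the same route as the paper. Density uses the same case split on whether the prefix of a basic cone reaches $+$, isolation uses the same observation that the cone $\pi \, e_v \, (\Delta_+)^\omega$ is a singleton, and invariance uses the same fact that prefix maps preserve an eventual $(e_+)^\omega$ tail. You also justify two points the paper leaves implicit: that elements of the generated pseudogroup are locally prefix maps, via closure of prefix maps under composition on comparable subcones, and that applying the argument to $g^{-1}$ upgrades invariance of the image to equality.
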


\begin{proof}
Let $U$ be an open set of $(\Delta_+)^\omega_0$. Then $U$ has an open subset of the form $\pi \, (\Delta_+)^\omega$ for some $\pi \in (\Delta_+)^*_0$. If $\tau(\pi) = +$, then $\pi = \pi' \, e_v \, (e_+)^n$ for some $n \geq 0$ and some path $\pi'$ contained within $\Delta$ with $v = \tau(\pi)$. In this case, $\pi' \, e_v \, (e_+)^\omega$ is contained within the image of $\Delta_0^\omega$ and in $U$. Otherwise, $\pi \, e_v \, (e_+)^\omega$ is contained within the image of $\Delta^*_0$ and in $U$. Therefore the image of $\Delta_0^*$ is dense.

Isolation follows from the fact that for every $\pi \in \Delta^*_0$, the cone $\pi \, e_v \, (\Delta_+)^\omega$ is just the singleton $\{\pi \, e_v \, (e_+)^\omega\}$, the only infinite path in $\Delta_+$ with prefix $\pi \, e_v$, and hence $\pi \, e_v \, (e_+)^\omega$ is isolated.

As for being fixed by $[ \mcal{G}(\Delta_+) ]$, observe that the image under a prefix map of a path with suffix $(e_+)^\omega$ will also be a path with suffix $(e_+)^\omega$. In particular, as elements of $[ \mcal{G}(\Delta_+) ]$ are locally prefix substitutions, the result follows.  
\end{proof}

Throughout the remainder of this paper, we will identify $\Delta_0^*$ with its image in $(\Delta_+)_0^\omega$ and $\Delta_0^\omega$ with its complement. 

\begin{remark}
The action of an element of $[\mcal{G}(\Delta_+)]$ upon $\Delta_0^*$ is usually called a \emph{quasiautomorphism}, and the group $[\mcal{G}(\Delta_+)]$ the \emph{quasiautomorphism group}. For instance, in \cite{lehnertthesis, bleakcocf}, the group $[ \mcal{G}(\Delta_+) ]$ is denoted $\textrm{QAut}(\mcal{T}_{2, c})$ for $\Delta = |\{0, 1\}|$.
\end{remark}

\begin{definition}[Partition labelling]
A \emph{partition labelling} on $\Delta_+$ is a map
$$f: \edge \Delta_+ \longrightarrow \{0, 1\}^*$$
such that for each state $v$ of $\Delta$, the cones $f(e) \, \{0, 1\}^\omega$ for $e \in \st(v, \Delta_+)$ partition $\{0, 1\}^\omega$. Explicitly, we may construct such an $f$ by bijecting each $\st(v, \Delta_+)$ with a set of the form
$$\{0^k \,1 : 0 \leq k < n - 1\} \cup \{0^{n - 1}\}$$
for $\st(v, \Delta_+)$ with cardinality $n \geq 2$ and $\{\ve\}$ for $\st(v, \Delta_+)$ of cardinality $1$ (observe that $\st(v, \Delta_+)$ is always non-empty, so this is always well-defined). This extends naturally to a map
$$f: (\Delta_+)^\omega_0 \longrightarrow \{0, 1\}^{\leq \omega}, \qquad e_1 \, e_2 \hdots \longmapsto f(e_1) \, f(e_2) \hdots.$$
\end{definition}

Throughout the remainder of this section, $f: (\Delta_+)^\omega_0 \longrightarrow \{0, 1\}^{\leq \omega}$ will denote a partition labelling of $\Delta_+$.

\begin{proposition}\label{prop:shortpaths}
Let $\pi$ be a path in $(\Delta_+)^\omega_0$. Then $f(\pi) \in \{0, 1\}^*$ if and only if $\pi \in \Delta^*_0$.
\end{proposition}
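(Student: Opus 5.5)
The proof will be a direct check from the definition of the partition labelling, using the dichotomy recorded in the definition of augmented automata: every $\pi \in (\Delta_+)^\omega_0$ either lies in $\Delta^\omega_0$ (it never reaches $+$), or can be written uniquely as $\pi' \, e_v \, (e_+)^\omega$ with $\pi' \in \Delta^*_0$ and $v = \tau(\pi')$. Under the identification made after \Cref{prop:density}, the second case is exactly the statement $\pi \in \Delta^*_0$. So I will prove two things: $f$ sends paths of the second kind to finite words, and paths of the first kind to infinite words.

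The key step is to work out which transitions $e$ have $f(e) = \ve$. By the explicit construction, $f(e) = \ve$ exactly when $|\st(\iota(e), \Delta_+)| = 1$. (For a general partition labelling the same holds: if the star has at least two elements, the cones $f(e)\,\{0,1\}^\omega$ partition $\{0,1\}^\omega$ and are pairwise disjoint, so no $f(e)$ can be $\ve$, since $\ve$ gives the whole space.) Now consider the stars:
\begin{itemize}
\item The state $+$ has the single outgoing transition $e_+$, so $f(e_+) = \ve$.
\item Any transition $e \in \edge \Delta$ starts at a state $v = \iota(e)$ of $\Delta$. The star of $v$ in $\Delta_+$ contains both $e$ and the distinct transition $e_v$, so it has at least two elements and hence $f(e) \neq \ve$.
\end{itemize}

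Given this, both directions are immediate. If $\pi = \pi' \, e_v \, (e_+)^\omega$, then
$$f(\pi) = f(\pi')\, f(e_v)\, \ve\, \ve \hdots = f(\pi')\, f(e_v),$$
which is a finite concatenation of finite words and so lies in $\{0,1\}^*$. If instead $\pi = e_1 e_2 \hdots \in \Delta^\omega_0$, every $e_i$ lies in $\edge \Delta$. Each $f(e_i)$ is therefore non-empty, and $f(\pi)$ has length at least $n$ for every $n$, so $f(\pi) \in \{0,1\}^\omega$. Since the two cases exhaust $(\Delta_+)^\omega_0$, the equivalence follows.

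The only point needing care is the observation that every edge of $\Delta$ leaves a state with at least two outgoing transitions in $\Delta_+$. This is exactly why the extra transition $e_v$ is attached at every state. Note that a state $v$ of $\Delta$ with no outgoing transitions in $\Delta$ can have $f(e_v) = \ve$, but this affects only finite paths and causes no difficulty.
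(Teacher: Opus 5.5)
Your proposal is correct and follows the paper's proof: every edge of $\Delta$ leaves a state whose star in $\Delta_+$ also contains $e_v$, so it gets a non-empty label and paths in $\Delta^\omega_0$ have infinite image, while $f(e_+) = \ve$ makes the image of $\pi' \, e_v \, (e_+)^\omega$ finite. You argue directly where the paper argues by contradiction, and you add that non-emptiness holds for any partition labelling because the cones are disjoint, but these are only presentational differences.
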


\begin{proof}
Suppose $\pi = e_1 \, e_2 \hdots$ has $f(\pi) \in \{0, 1\}^*$, but that $\pi \in \Delta^\omega_0$. Then for every $i$, if $v = \iota(e_i)$, both $e_v$ and $e_i$ are contained in $\st(v, \Delta_+)$, in which case by definition of $f$, we deduce $f(e_i) \neq \ve$. Therefore 
$$f(\pi) = f(e_1) \, f(e_2) \hdots$$ 
is a right-infinite word, which is absurd. The converse (namely, that if $\pi \in \Delta^*_0$ then $f(\pi) \in \{0, 1\}^*$) follows immediately from the fact that $f(e_+) = \ve$, since $\st(+, \Delta_+) = \{e_+\}$ has cardinality $1$. 
\end{proof}

\begin{proposition}\label{prop:uniquepaths}
Every element of $\{0, 1\}^\omega$ can be written uniquely in the either the form $f(\pi)$ for $\pi \in \Delta^\omega_0$, or in the form $f(\pi) \, \eta$ for some $\pi \in \Delta^*_0$ and $\eta \in \{0, 1\}^\omega$. In particular, $f: (\Delta_+)^\omega_0 \longrightarrow \{0, 1\}^{\leq \omega}$ is an injection.
\end{proposition}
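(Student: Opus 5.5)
We prove existence by reading the decomposition off $\xi$ one edge at a time, and uniqueness by showing that any such decomposition must agree with this reading, using that the cones at each state partition $\{0, 1\}^\omega$.

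\emph{Existence.} Fix $\xi \in \{0, 1\}^\omega$ and construct $\pi$ inductively. Start at the initial state $v_0 \in \vertti \Delta$. Suppose we have built a path $e_1 \hdots e_k$ in $\Delta$ from $v_0$ to $v_k$ with $f(e_1) \hdots f(e_k)$ a prefix of $\xi$, say $\xi = f(e_1) \hdots f(e_k) \, \xi_k$. The cones $f(e) \{0, 1\}^\omega$ for $e \in \st(v_k, \Delta_+)$ partition $\{0, 1\}^\omega$, so there is exactly one $e \in \st(v_k, \Delta_+)$ with $f(e)$ a prefix of $\xi_k$.
\begin{enumerate}[label=(\roman*)]
\item If $e = e_{v_k}$, stop: put $\pi = e_1 \hdots e_k \in \Delta_0^*$ and let $\eta$ be $\xi_k$ with the prefix $f(e_{v_k})$ removed. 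Then $\xi = f(\pi)\, f(e_{v_k})\, \eta$.
\item Otherwise set $e_{k+1} = e$ and continue.
\end{enumerate}
If we never stop, we get $\pi \in \Delta_0^\omega$. As in the proof of \Cref{prop:shortpaths}, each $f(e_i) \neq \ve$, because every state of $\Delta$ has at least the two outgoing transitions $e_i$ and $e_v$ in $\Delta_+$. So the prefixes $f(e_1) \hdots f(e_k)$ grow without bound, and therefore $f(\pi) = \xi$.

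The statement writes the second form as $f(\pi)\,\eta$. I will read this as $f(\pi \, e_v \, (e_+)^\omega)\,\eta = f(\pi)\, f(e_v)\, \eta$, that is, $f$ applied to the image of $\pi$ in $(\Delta_+)_0^\omega$, consistent with the identification made after \Cref{prop:density}. Taken literally with $\eta$ arbitrary, uniqueness would fail, since one could always write $\xi = f(\ve)\,\xi$.

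\emph{Uniqueness.} Suppose $\xi$ has two such decompositions, given by paths $e_1 e_2 \hdots$ and $e'_1 e'_2 \hdots$ in $\Delta_+$, where a finite $\pi$ is extended by $e_v (e_+)^\omega$. We show by induction that $e_i = e'_i$ for all $i$. Assume the first $i-1$ transitions agree and end at a common state $v \neq +$. Both $f(e_i)$ and $f(e'_i)$ are then prefixes of the same tail of $\xi$. Both $e_i$ and $e'_i$ lie in $\st(v, \Delta_+)$, and distinct members of this star give disjoint cones, so $e_i = e'_i$. Once one path reaches $+$, the other has reached $+$ at the same step, the two paths coincide from then on, and $\eta$ is determined as the remaining tail of $\xi$. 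An infinite path inside $\Delta$ cannot match a path that enters $+$, since their transitions would differ at some step. This gives uniqueness in both forms.

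\emph{Injectivity.} Injectivity of $f$ on $(\Delta_+)_0^\omega$ follows by the same argument without $\eta$. If $f(\pi) = f(\pi')$, where this common value may be finite or infinite, compare the paths edge by edge. At the first index where $e_i \neq e'_i$ with common source $v \neq +$, the words $f(e_i)$ and $f(e'_i)$ are incomparable, because their cones are disjoint. Hence the images differ at that position, a contradiction. If instead both paths reach $+$ together, they coincide afterwards.

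The only delicate point is the convention for the second form, settled above. The rest is the standard observation that the cones at each state form a prefix code, so the first transition is determined by $\xi$.
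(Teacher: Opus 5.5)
Your proof is correct and follows the paper's argument. The paper also builds $\pi$ greedily by taking, at each state $v_i$, the unique $e_{i+1} \in \st(v_i, \Delta_+)$ whose label $f(e_{i+1})$ is a prefix of the remaining tail, and it gets uniqueness from the uniqueness of that choice at every step. Your reading of the second form as $f(\pi \, e_v \, (e_+)^\omega)\,\eta$ is the identification of $\Delta_0^*$ with its image in $(\Delta_+)_0^\omega$ that the paper intends, and your explicit injectivity argument supplies what the paper treats as immediate.
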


\begin{proof}
Let $\xi \in \{0, 1\}^\omega$. We construct a path $\pi$ inductively as follows. Let $\xi_0 = \xi$, and let $v_0$ be the unique state in $\vertti \Delta_+$. Inductively, let $e_{i + 1}$ be the unique element of $\st(v_i, \Delta_+)$ such that $f(e_{i + 1})$ is a prefix of $\xi_i$ and let
$$\sigma_i = f(e_{i + 1}) \, \xi_{i + 1}, \qquad v_{i + 1} = \tau(e_{i + 1}).$$
In this way, we obtain that $f(\pi) = f(e_1) \, f(e_2)\, \hdots$ is a prefix of $\xi$, as required. Uniqueness follows since, for every $i$, $e_{i+1}$ is the unique element of $\st(v_i, \Delta_+)$ such that $f(e_{i+1})$ is a prefix of $\xi_i$. 
\end{proof}

\begin{remark}
The construction of \Cref{prop:uniquepaths} is that of a \emph{rational transducer} between $\Delta_+$ and $|\{0, 1\}|$; see for instance \cite[2.4.1.2]{sakar} and \cite[Example 4.9]{bleakagain}. 
\end{remark}

\begin{definition}[Demonstrativity]
Let $G$ be a group and $G_*$ a subgroup of $G$. We say that a homomorphism $\phi: G \rightarrow V$ is \emph{demonstrative} with respect to $G_*$ for an open set $U \subset \{0, 1\}^\omega$ in the sense of \cite{demonstrative} if the following are equivalent for $g_* \in G$:
\begin{enumerate}[label = (\roman*)]
\item $U \cap U \cdot \phi(g_*) \neq \emptyset$;
\item $\phi(g_*)$ fixes $U$ pointwise;
\item $g_* \in G_*$.
\end{enumerate}
\end{definition}

We now prove the main result of the section, inspired by \cite[Theorem 18]{bleakcocf}.

\begin{lemma}\label{lem:embed}
Let $\Delta$ be an unlabelled finite automaton. Then there exists an embedding $\phi: [\mcal{G}(\Delta_+)] \hookrightarrow V$. Moreover, suppose $[\mcal{G}(\Delta_+)]_*$ is the point stabiliser of $[\mcal{G}(\Delta_+)]$ with respect to the action upon $(\Delta_+)_0^\omega$, for a point in $\Delta^*_0$. Then we have $\phi^{-1}\, V_* = [\mcal{G}(\Delta_+)]_*$. In particular, $\phi$ is demonstrative for $[\mcal{G}(\Delta_+)]$ with respect to $[\mcal{G}(\Delta_+)]_*$ for some open set $U \subset \{0, 1\}^\omega$. 
\end{lemma}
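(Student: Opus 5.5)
The plan is to conjugate through the partition labelling $f$. Fix a partition labelling $f$ on $\Delta_+$. By \Cref{prop:uniquepaths}, every $\xi \in \{0,1\}^\omega$ is uniquely either $f(\pi)$ with $\pi \in \Delta_0^\omega$, or $f(\pi)\,\eta$ with $\pi \in \Delta_0^*$ and $\eta \in \{0,1\}^\omega$. By \Cref{prop:density}, the points of $\Delta_0^*$ are isolated in $(\Delta_+)_0^\omega$, and any $g \in [\mcal{G}(\Delta_+)]$ permutes them.

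For $g \in [\mcal{G}(\Delta_+)]$ we define $\phi(g)$ as follows:
$$f(\pi) \longmapsto f(\pi \cdot g) \quad (\pi \in \Delta_0^\omega), \qquad f(\pi)\,\eta \longmapsto f(\pi \cdot g)\,\eta \quad (\pi \in \Delta_0^*).$$
Thus each isolated point $\pi$ is "blown up" to the cone $f(\pi)\{0,1\}^\omega$ (more precisely, to the part of $f(\pi)\{0,1\}^\omega$ lying beyond the partition structure), and $g$ acts on it by pure relabelling of that prefix. This is a bijection with inverse $\phi(g^{-1})$. Functoriality $\phi(gh) = \phi(g)\phi(h)$ is immediate because the formula is given pointwise through the action on $(\Delta_+)^\omega_0$. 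Injectivity also follows: if $g \ne 1$, then since $\Delta_0^*$ is dense and $g$ is continuous, $g$ moves some $\pi \in \Delta_0^*$. In that case $\phi(g)$ sends $f(\pi)\eta$ to $f(\pi g)\eta$, and these differ by uniqueness in \Cref{prop:uniquepaths}.

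The main obstacle is showing that $\phi(g) \in V$, i.e.\ that it is locally a prefix substitution. I would use \Cref{prop:vsagree} and verify the local condition at every point, using compactness of $(\Delta_+)_0^\omega$. Write $g$ as finitely many prefix maps $\pi'\,(\Delta_+)^\omega \to \pi''\,(\Delta_+)^\omega$ on a clopen partition; this is possible by compactness, as in the proof of \Cref{prop:vsagree}.

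We first treat the case where $\pi' \in (\Delta_+)^*_0$ ends at a state of $\Delta$ (not $+$). Then the set of $\xi$ whose decoded path begins with $\pi'$ is exactly the cone $f(\pi')\{0,1\}^\omega$, because $f$ restricted to stars is a partition. On this cone $\phi(g)$ acts by $f(\pi')\zeta \mapsto f(\pi'')\zeta$: the decoding of the tail is the same, since $\tau(\pi') = \tau(\pi'')$ and the suffix $\xi$ is untouched. This is a prefix replacement.

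Next we treat the case where $\pi'$ passes through $+$. Then $\pi' = \pi_0 e_v e_+^n$ is a singleton cone, $\pi_0 \in \Delta_0^*$, and $\pi'' = \pi_1 e_w e_+^m$. Since $f(e_+) = \ve$, $\phi(g)$ maps $f(\pi_0 e_v)\{0,1\}^\omega$ to $f(\pi_1 e_w)\{0,1\}^\omega$ by replacing the prefix. Although $f(\pi_0)\eta$ is the representation, $f(e_v)$ is the prefix of $\eta$ in this case, so this too is a prefix substitution.

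The cones obtained from these pieces cover $\{0,1\}^\omega$, so $\phi(g)$ is a prefix partition map and lies in $V$. The point that needs care is matching the prefix cones in $\{0,1\}^\omega$ with $f$-images of cones in $(\Delta_+)^\omega_0$; I would isolate this as a lemma: $f^{-1}$ of the cone $f(\pi)\{0,1\}^\omega$ is the cone $\pi(\Delta_+)^\omega$ for $\pi \in (\Delta_+)_0^*$.

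Finally, for the stabiliser statement, let the chosen point be $\pi_0 \in \Delta_0^*$. Replace $f$ by composing with an element of $V$ (conjugating $\phi$) so that $0^\omega \in U := f(\pi_0 e_v)\{0,1\}^\omega$. If $g$ fixes $\pi_0$, then $\phi(g)$ is the identity on $U$, so it fixes $0^\omega$. Conversely, if $\pi_0 g = \pi_1 \neq \pi_0$, then $\phi(g)$ maps $U$ onto $f(\pi_1 e_w)\{0,1\}^\omega$, which is disjoint from $U$ by injectivity/uniqueness, so $0^\omega$ is moved. Hence $\phi^{-1}V_* = [\mcal{G}(\Delta_+)]_*$. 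The same dichotomy (either $\phi(g)$ fixes $U$ pointwise or $U \cap U\phi(g) = \emptyset$) gives demonstrativity for this $U$.
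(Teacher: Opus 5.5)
Your proposal is correct and follows essentially the same route as the paper. You transport the action through the partition labelling $f$, check locally that $\phi(g)$ replaces the prefix $f(\pi')$ by $f(\pi'')$ on the cone $f(\pi')\,\{0, 1\}^\omega$ (concluding via \Cref{prop:vsagree}), and obtain the stabiliser and demonstrativity statements from the fix-or-disjoint dichotomy for $U = f(\pi_0\, e_v)\,\{0, 1\}^\omega$. Your separate treatment of cones through $+$, and your explicit conjugation sending $f(\pi_0\, e_v)\,0^\omega$ to $0^\omega$, only spell out steps the paper leaves implicit; the paper handles the former uniformly because $f(e_+) = \ve$.
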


\begin{proof}
For $g: (\Delta_+)^\omega_0 \rightarrow (\Delta_+)^\omega_0$ in $[ \mcal{G}(\Delta_+) ]$, define $\phi(g): \{0, 1\}^\omega \longrightarrow \{0, 1\}^\omega$ by
$$f(\pi) \cdot \phi(g) = f(\pi \cdot g), \qquad (f(\pi) \, \eta)\cdot \phi(g) = f(\pi \cdot g) \, \eta$$
for $\pi \in \Delta^\omega_0$ and $\pi \in \Delta^*_0$, $\eta \in \{0, 1\}^\omega$ respectively. The map $\phi$ is well-defined by \Cref{prop:uniquepaths}, and is clearly a group homomorphism from $[ \mcal{G}(\Delta_+) ]$ to $\Aut(\{0, 1\}^\omega)$. Moreover, $\phi$ is injective, since if $\phi(g)$ is the identity for some $g \in [ \mcal{G}(\Delta) ]$, then $f(\pi \cdot g) = f(\pi)$ for all $\pi \in \Delta^\omega_0$ and $f(\pi \cdot g) \, 0^\omega = f(\pi) \, 0^\omega$ for all $\pi \in \Delta^*_0$, whence $g$ is the identity by \Cref{prop:uniquepaths}.

Let $g \in [ \mcal{G}(\Delta_+) ]$. We now claim that $\phi(g) \in V$. Suppose $\xi \in \{0, 1\}^\omega$. Then either $\xi = f(\pi)$ for $\pi \in \Delta^\omega_0$, or $\xi = f(\pi)\, \eta$ for $\pi \in \Delta^*_0$ and $\eta \in \{0, 1\}^\omega$. Observe that there exists an open neighbourhood $U = \pi' \, (\Delta_+)^\omega$ of $\pi$ such that $g|_U$ is a prefix map, namely,
$$g|_U: \pi' \, (\Delta_+)^\omega \longrightarrow \pi'' \, (\Delta_+)^\omega.$$
Let $\widetilde{U} = f(\pi') \, \{0, 1\}^\omega$ be a neighbourhood of $\xi$. Then $\phi(g)|_{\widetilde{U}}$ is the prefix map
$$\phi(g)|_{\widetilde{U}}: f(\pi') \, \{0, 1\}^\omega \longrightarrow f(\pi'') \, \{0, 1\}^\omega.$$
Therefore $\phi(g) \in V$. 

Finally, observe that if $[ \mcal{G}(\Delta_+) ]_*$ is the point stabiliser of a point $\pi \in \Delta^*_0$, then $\phi(g)$ stabilises $f(\pi) \, 0^\omega$ if and only if $g$ stabilises $\pi$. In particular, since $f(\pi) \, 0^\omega$ is in the orbit of $0^\omega$ under $V$ by \Cref{rem:vprops}, we deduce that $\phi^{-1}\,V_* = [ \mcal{G}(\Delta_+) ]_*$. Moreover, $\phi$ is demonstrative with respect to $[\mcal{G}(\Delta_+)]$ for the open set $U = f(\pi) \, \{0, 1\}^\omega$, so the result follows. 
\end{proof}

\section{Canonisation}\label{sec:canon}

Henceforth, let $\Gamma$ be a context-free automaton. As prior, connected components will be considered in the undirected sense, even for directed automata. 

We begin with the following observations. 

\begin{proposition}\label{prop:partial}
Let $m, n \geq 0$. Then $\nabla^m (\nabla^n \Gamma) = \nabla^{m + n} \Gamma$.
\end{proposition}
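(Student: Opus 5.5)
The plan is to compare the two automata state by state, after computing distances. Both $\nabla^m(\nabla^n\Gamma)$ and $\nabla^{m+n}\Gamma$ are induced subautomata of $\Gamma$. Each is therefore determined by three pieces of data: its set of states, its initial states, and its terminal states. Throughout, $|v|$ denotes the distance in $\Gamma$ from $\vertti\Gamma$, and $|v|_n$ the corresponding distance in $\nabla^n\Gamma$ from $\vertti\nabla^n\Gamma$. Here, as in \Cref{def:cfa}, the distance is the length of a shortest path from an initial state, so that $\nabla^n\Gamma$ is meaningful.

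The key claim is that $|v|_n = |v| - n$ for every state $v$ with $|v|\geq n$. Once this holds, the rest is bookkeeping:
\begin{itemize}
\item the states of $\nabla^m(\nabla^n\Gamma)$ are those $v$ with $|v|\ge n$ and $|v|-n\ge m$, i.e.\ $|v|\ge m+n$;
\item the initial states are those with $|v| = m+n$;
\item the terminal states are the terminal states of $\Gamma$ with $|v|\ge m+n$;
\item the edges are induced in both automata.
\end{itemize}
So the two automata coincide.

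To prove the claim, I argue the two inequalities separately.

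For $|v|_n \ge |v|-n$: take a path in $\nabla^n\Gamma$ of length $k$ from some $w$ with $|w|=n$ to $v$. This is also a path in $\Gamma$. Prepending a path of length $n$ from $\vertti\Gamma$ to $w$ gives $|v|\le n+k$.

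For $|v|_n\le |v|-n$: take a shortest path $e_1\cdots e_{|v|}$ in $\Gamma$ from $\vertti\Gamma$ to $v$. Its intermediate states $v_i=\tau(e_i)$ satisfy $|v_i|=i$ exactly, since every prefix of a shortest path is itself shortest. The suffix starting at $v_n$ therefore lies entirely in states of distance at least $n$, so it is a path in $\nabla^n\Gamma$. It starts at an initial state of $\nabla^n\Gamma$ and has length $|v|-n$.

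The main point requiring care is the convention for $|\cdot|$, directed versus undirected. The prefix-of-a-geodesic argument works identically in either case, as long as the same convention is used in $\Gamma$ and in $\nabla^n\Gamma$. I will also note that accessibility of $\nabla^n\Gamma$ follows from the same suffix argument. The degenerate cases $m=0$ or $n=0$ are immediate, since $\nabla^0\Gamma = \Gamma$.
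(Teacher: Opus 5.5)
Your proof is correct and follows essentially the same route as the paper. The paper's one-line proof rests on the geodesic fact that a state $v$ with $|v| = m+n$ lies at distance $m$ from some $w$ with $|w| = n$. You make both inequalities in $|v|_n = |v| - n$ explicit, together with the resulting bookkeeping on states, initial and terminal states, and edges.
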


\begin{proof}
This is immediate from the fact that, if $|v| = m + n$, then there exists $w$ such that $|w| = n$ and $d(v, w) = m$ (and similarly for $|v| \geq m + n$). 
\end{proof}

\begin{proposition}\label{prop:inclusions}
Let $v$ a state of $\Gamma$ with $|v| = n$. Then there exists a unique sequence of contexts 
$$\Gamma = \partial^{v_0} \Gamma \supset \partial^{v_1}\Gamma \supset \hdots \supset \partial^{v_n} \Gamma = \partial^v \Gamma$$
with $v_0 \in \vertti \Gamma$, $v_n = v$ such that for every $i = 1, \hdots, n - 1$, $\partial^{v_{i + 1}}$ is a connected component of $\nabla(\partial^{v_i} \Gamma)$. 
\end{proposition}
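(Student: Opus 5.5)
Both existence and uniqueness come from a geodesic from the initial state to $v$, combined with \Cref{prop:partial}. Since $|v| = n$, choose a shortest directed path $v_0 \to v_1 \to \hdots \to v_n = v$ in $\Gamma$ with $v_0 \in \vertti \Gamma$. Each prefix of a geodesic is a geodesic, so $|v_i| = i$ for every $i$. Set $C_i = \partial^{v_i}\Gamma$, the connected component of $\nabla^i \Gamma$ containing $v_i$. For $i = 0$ we have $\nabla^0\Gamma = \Gamma$, and accessibility makes $\Gamma$ connected, so $C_0 = \Gamma$.

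To get the nesting $C_i \supset C_{i+1}$, note that $\nabla^{i+1}\Gamma \subset \nabla^i\Gamma$. The state $v_{i+1}$ lies in both, since $|v_{i+1}| = i+1 \geq i$. Any connected subgraph of $\nabla^{i+1}\Gamma$ through $v_{i+1}$ is therefore a connected subgraph of $\nabla^i\Gamma$ through $v_{i+1}$. The edge $v_i \to v_{i+1}$ lies in $\nabla^i\Gamma$, so $v_{i+1}$ and $v_i$ are in the same component there. Hence $C_{i+1} \subset C_i$.

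To see that $C_{i+1}$ is a connected component of $\nabla(C_i)$, I apply \Cref{prop:partial} with $m = 1$ to $\nabla^i\Gamma$, which gives $\nabla(\nabla^i \Gamma) = \nabla^{i+1}\Gamma$. The main obstacle is that the distance $|\cdot|$ inside $C_i$ is measured from $\vertti C_i = \{w \in C_i : |w| = i\}$. I must check it agrees with $|\cdot| - i$ computed in $\Gamma$. The content of \Cref{prop:partial} is that every $w$ with $|w| \geq i$ is reached from a point at level $i$ by a path of length $|w| - i$ staying in $\nabla^i\Gamma$. That path lies in the component of $w$, hence in $C_i$ whenever $w \in C_i$. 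Thus $\nabla(C_i)$ is the union of the components of $\nabla^{i+1}\Gamma$ contained in $C_i$. Since $C_{i+1} \subset C_i$ is one of those components, it is a connected component of $\nabla(C_i)$, as required.

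For uniqueness, suppose $\partial^{w_0}\Gamma \supset \hdots \supset \partial^{w_n}\Gamma$ is another such chain with $w_n = v$. By the same level computation, the $i$-th context of the chain lies in $\nabla^i\Gamma$. It is a connected component of $\nabla^i\Gamma$ and it contains $v$, because every later context contains $v$. Components are disjoint, so it must equal $C_i$. As contexts, the two chains therefore coincide term by term. Any ambiguity in the base points $v_i$ is only in the labelling of the same contexts, so uniqueness is understood as uniqueness of the sequence of subautomata.
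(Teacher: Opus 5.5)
Your proof is correct and uses the same idea as the paper. By \Cref{prop:partial}, $\nabla$ of a component of $\nabla^i\Gamma$ is just its intersection with $\nabla^{i+1}\Gamma$, so the chain is forced to consist of the components of $\nabla^i\Gamma$ containing $v$. The paper runs this as a downward induction from $\partial^v\Gamma$ rather than building the chain upward along a geodesic, and it leaves implicit the relative-level check that you spell out.

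One small fix: $|\cdot|$ is measured with the undirected metric $d$, so take an undirected geodesic from the initial state to $v$. A shortest directed path need not have length $|v|$ when $\Gamma$ is not symmetric. The rest of your argument goes through verbatim.
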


\begin{proof}
Observe that $\partial^v \Gamma$ is a connected component of $\nabla^n \Gamma = \nabla(\nabla^{n - 1}\Gamma)$ by \Cref{prop:partial}. In particular, $\partial^v \Gamma$ is contained in $\nabla(\partial^w \Gamma)$ for a unique connected component $\partial^w \Gamma$ of $\nabla^{n - 1}\Gamma$, that is, with $|w| = n - 1$. The result follows by induction.
\end{proof}

Henceforth, such a sequence of contexts will be called a \emph{context chain}.

\begin{definition}[Contextual automata]
For $v$ a state of $\Gamma$, let $[\partial^v \Gamma]$ denote a choice of representative of the isomorphism class of $\partial^v \Gamma$ (that is, so that if $\partial^v \Gamma$ is isomorphic to $\partial^w \Gamma$, then $[\partial^v \Gamma] = [\partial^w \Gamma]$). For every representative $[\partial^v \Gamma]$, choose a finite collection of inclusions $e_1, \hdots, e_m$
$$e_i: [\partial^{w_i} \Gamma] \longhookrightarrow [\partial^v \Gamma]$$
such that the images of $e_i$ are connected components of $\nabla [\partial^v \Gamma]$, and every connected component of $\nabla [\partial^v \Gamma]$ is the image of a unique $e_i$. 

A \emph{contextual automaton} $\Lambda$ of $\Gamma$ is defined as follows. Let the states of $\Lambda$ be the representatives $[\partial^v \Gamma]$ and let the transitions of $\Lambda$ be the inclusions
$$e_i: [\partial^{w_i} \Gamma] \longhookrightarrow [\partial^v \Gamma]$$
for each $[\partial^v \Gamma]$ as described above, considered as a transition with initial state $[\partial^v \Gamma]$ and terminal state $[\partial^{w_i} \Gamma]$. In addition, take $\vertti \Lambda = \{[\Gamma]\}$ and $\verttt \Lambda = \emptyset$. Combinatorially, the number of edges from representatives $[\partial^v \Gamma]$ to $[\partial^w \Gamma]$ in $\Lambda$ is equal to the number of components of $\nabla[\partial^v \Gamma]$ which are isomorphic to $[\partial^w \Gamma]$. 
\end{definition}

Henceforth, fix an isomorphism $\phi_{v_0}: \Gamma \rightarrow [\Gamma]$. 

\begin{proposition}\label{prop:syntactic}
Let $\Lambda$ be a contextual automaton of $\Gamma$, and let $v$ be a state of $\Gamma$. Then there exists a unique path $\pi = e_1 \hdots e_n$ in $\Lambda^*_0$ and a unique choice of isomorphisms $\phi_{v_i}: \partial^{v_i}\Gamma \rightarrow [\partial^{v_i}\Gamma]$ for $i = 1, \hdots, n$ such that 
\begin{center}
\begin{tikzcd}
	{\Gamma=\partial^{v_0}\Gamma} & {\partial^{v_1}\Gamma} & \hdots & {\partial^{v_n}\Gamma=\partial^v \Gamma} \\
	{[\Gamma]=[\partial^{v_0}\Gamma]} & {[\partial^{v_1}\Gamma]} & \hdots & {[\partial^{v_n}\Gamma]=[\partial^v \Gamma]}
	\arrow["{\phi_{v_0}}", from=1-1, to=2-1]
	\arrow[hook', from=1-2, to=1-1]
	\arrow["{\phi_{v_1}}", from=1-2, to=2-2]
	\arrow[hook', from=1-3, to=1-2]
	\arrow[hook', from=1-4, to=1-3]
	\arrow["{\phi_{v_n}}", from=1-4, to=2-4]
	\arrow["{e_1}"', hook', from=2-2, to=2-1]
	\arrow["{e_2}"', hook', from=2-3, to=2-2]
	\arrow["{e_n}"', hook', from=2-4, to=2-3]
\end{tikzcd}
\end{center}
commutes, where $v_0 \in \vertti \Gamma$ and
$$\Gamma = \partial^{v_0} \Gamma \supset \partial^{v_1}\Gamma \supset \hdots \supset \partial^{v_n} \Gamma = \partial^v \Gamma$$
is a context chain.
\end{proposition}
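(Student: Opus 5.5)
Let $v$ be a state of $\Gamma$ with $|v| = n$. I argue by induction on $n$, building the path $e_1 \hdots e_n$ and the isomorphisms $\phi_{v_i}$ one step at a time along the context chain. By \Cref{prop:inclusions}, the chain $\Gamma = \partial^{v_0}\Gamma \supset \hdots \supset \partial^{v_n}\Gamma = \partial^v\Gamma$ is uniquely determined by $v$, so only the path in $\Lambda$ and the vertical isomorphisms remain to be produced. For $n = 0$ we have $v = v_0$, the path is empty, and the only isomorphism is the fixed $\phi_{v_0}$.

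For the inductive step, suppose $e_1, \hdots, e_i$ and $\phi_{v_0}, \hdots, \phi_{v_i}$ have been constructed and are unique. The state $\partial^{v_{i+1}}\Gamma$ is a connected component of $\nabla(\partial^{v_i}\Gamma)$. The isomorphism $\phi_{v_i}$ preserves the automaton structure, including the initial states, so it preserves the distance $|\cdot|$ from the initial states. Here we must check that $|\cdot|$ computed inside the context $\partial^{v_i}\Gamma$, with its own initial set $\vertti \nabla^i\Gamma$, agrees with $|\cdot|$ in $\Gamma$ shifted by $i$; this follows from \Cref{prop:partial}. Hence $\phi_{v_i}$ carries $\nabla(\partial^{v_i}\Gamma)$ isomorphically onto $\nabla[\partial^{v_i}\Gamma]$ and sends connected components to connected components.

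So $\phi_{v_i}(\partial^{v_{i+1}}\Gamma)$ is a single connected component $C$ of $\nabla[\partial^{v_i}\Gamma]$. By the definition of the contextual automaton, $C$ is the image of exactly one chosen inclusion $e_{i+1} : [\partial^{w}\Gamma] \hookrightarrow [\partial^{v_i}\Gamma]$. This $e_{i+1}$ is a transition of $\Lambda$ starting at $[\partial^{v_i}\Gamma]$, so the path extends. Since $C \cong \partial^{v_{i+1}}\Gamma$, we get $[\partial^w\Gamma] = [\partial^{v_{i+1}}\Gamma]$ by the choice of representatives. Now define
$$\phi_{v_{i+1}} = e_{i+1}^{-1} \circ \phi_{v_i}|_{\partial^{v_{i+1}}\Gamma},$$
where $e_{i+1}^{-1}$ denotes the inverse of $e_{i+1}$ onto its image. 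This is an isomorphism and makes the new square commute by construction.

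For uniqueness, suppose some transition $e$ out of $[\partial^{v_i}\Gamma]$ and some isomorphism $\phi$ also make the square commute. Then the image of $e$ contains $\phi_{v_i}(\partial^{v_{i+1}}\Gamma) = C$. Images of distinct chosen inclusions are distinct components, so $e = e_{i+1}$. Since $e_{i+1}$ is injective, commutativity forces $\phi = \phi_{v_{i+1}}$.

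The main obstacle is the compatibility check that isomorphisms of contexts preserve the $\nabla$-structure. This requires that the initial states of $\partial^{v_i}\Gamma$ are exactly its frontier points, and that lengths measured inside the context agree with $|\cdot| - i$ measured in $\Gamma$. I would settle this using \Cref{prop:partial}, together with the observation that a shortest path from $\vertti\Gamma$ to a state of $\partial^{v_i}\Gamma$ enters $\nabla^i\Gamma$ at a frontier point and thereafter stays in that same component.
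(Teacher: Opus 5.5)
Your proposal is correct and follows essentially the same route as the paper: induct along the unique context chain from \Cref{prop:inclusions}, use the definition of $\Lambda$ to pick the unique chosen inclusion whose image is $\phi_{v_i}(\partial^{v_{i+1}}\Gamma)$, and let commutativity of the square force $\phi_{v_{i+1}}$. The only difference is that you explicitly check that charts respect the $\nabla$-structure, via the geodesic argument behind \Cref{prop:partial}; the paper's proof takes this point for granted.
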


\begin{proof}
Suppose $v, w$ are states of $\Gamma$ such that $\partial^w \Gamma$ is a connected component of $\nabla(\partial^v \Gamma)$ and suppose $\phi_v: \partial^v \Gamma \longrightarrow [\partial^v \Gamma]$ is an isomorphism. Then, by the definition of $\Lambda$, there exists a unique inclusion $e: [\partial^w \Gamma] \longhookrightarrow [\partial^v \Gamma]$ whose image is $\phi_v(\partial^w \Gamma)$. Therefore, there exists a unique isomorphism $\phi_w: \partial^w \Gamma \rightarrow [\partial^w \Gamma]$ such that 
\begin{center}
\begin{tikzcd}
	{\partial^{v}\Gamma} & {\partial^w\Gamma} \\
	{[\partial^{v}\Gamma]} & {[\partial^w \Gamma]}
	\arrow["{\phi_v}"', from=1-1, to=2-1]
	\arrow[hook', from=1-2, to=1-1]
	\arrow["{\phi_w}", from=1-2, to=2-2]
	\arrow["e"', hook', from=2-2, to=2-1]
\end{tikzcd}
\end{center}
commutes. The existence result then follows by \Cref{prop:inclusions} and by induction and uniqueness follows from the uniqueness of each choice of $\phi_{v_i}$, and the uniqueness in \Cref{prop:inclusions}.
\end{proof}

\begin{definition}[Charts and transition maps]
The map $\phi_v: \partial^v \Gamma \longrightarrow [\partial^v \Gamma]$ constructed in \Cref{prop:syntactic} is called the \emph{chart} upon $\partial^v \Gamma$. For isomorphic contexts $\partial^v \Gamma$ and $\partial^w \Gamma$, we call the composition
$$\phi_w^{-1} \circ \phi_v: \partial^v \Gamma \longrightarrow \partial^w \Gamma$$
the \emph{transition map} from $\partial^v \Gamma$ to $\partial^w \Gamma$. 

Henceforth, for $t = [\partial^v \Gamma]$ a context of $\Gamma$, define finite sets $Q^t_1 \subset Q^t$ by 
$$Q^t = \vertti [\partial^v \Gamma], \qquad Q^t_1 = Q^t \cap \verttt [\partial^v \Gamma].$$
For each state $v$ of $\Gamma$, identify $v$ with the pair $(\pi, q)$, which comprises the path $\pi = e_1 \hdots e_n$ as described in \Cref{prop:syntactic} and the element $q = \phi_v(v) \in Q^t$. Observe that this uniquely characterises $v$. Moreover, every pair $(\pi, q)$ with $t = \tau(\pi)$ and $q \in Q^t$ uniquely characterises a state $v$. Finally, a state $v \in \verttt \Gamma$ if and only if $v \in \verttt \partial^v \Gamma$, that is, if and only if $q = \phi_v(v) \in \verttt [\partial^v \Gamma]$.
\end{definition}

\begin{proposition}\label{prop:sub}
For every state $v = (\pi', q')$ of $\Gamma$, the states of $\partial^v \Gamma$ are precisely the states of the form $(\pi' \, \xi, q)$. Moreover, for $w = (\pi'', q'')$ with $\partial^v \Gamma$ and $\partial^w \Gamma$ isomorphic, the transition map from $\partial^v \Gamma$ to $\partial^w \Gamma$ is defined explicitly on states by
$$\phi_w^{-1} \circ \phi_v: \partial^v \Gamma \longrightarrow \partial^w \Gamma, \qquad (\pi' \, \xi, q) \longmapsto (\pi'' \, \xi, q).$$
\end{proposition}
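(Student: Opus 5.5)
We argue by induction along context chains, using \Cref{prop:inclusions} and the uniqueness clause of \Cref{prop:syntactic}. For the first claim, let $v = (\pi', q')$ with $|v| = n$, and let $u$ be a state of $\partial^v \Gamma$ with $|u| = m \geq n$. Since $\partial^u \Gamma$ is a connected component of $\nabla^{m - n}(\partial^v \Gamma)$ by \Cref{prop:partial}, the context chain of $u$ must pass through $\partial^v \Gamma$. Indeed, the unique chain of $u$ given by \Cref{prop:inclusions} has as its $n$-th term the component of $\nabla^n \Gamma$ containing $u$, and this component is $\partial^v \Gamma$. The charts $\phi_{v_0}, \dots, \phi_{v_n}$ along the chain of $u$ therefore coincide with those along the chain of $v$, by the uniqueness in \Cref{prop:syntactic}. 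Hence the path for $u$ extends $\pi'$, so $u = (\pi'\,\xi, q)$. Conversely, suppose a state has the form $(\pi'\,\xi, q)$. Its context chain is read off from the path, so its $n$-th context is the one indexed by $\pi'$, namely $\partial^v \Gamma$. The state therefore lies in $\partial^v \Gamma$.

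For the second claim, we first observe that $\partial^v \Gamma$ and $\partial^w \Gamma$ are isomorphic, so $\tau(\pi') = \tau(\pi'') = t$. Consequently $\pi''\,\xi$ is a path in $\Lambda_0^*$ whenever $\pi'\,\xi$ is. We then show that the transition map $\psi = \phi_w^{-1}\circ\phi_v$ carries $\partial^u\Gamma$ onto $\partial^{u'}\Gamma$, where $u$ and $u'$ are the states attached to $\pi'\,\xi$ and $\pi''\,\xi$, and that it intertwines the charts: $\phi_{u'} \circ \psi = \phi_u$ on $\partial^u \Gamma$. We prove this by induction on the length of $\xi$; the case $\xi$ empty is the definition of $\psi$.

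For the inductive step, suppose the statement holds for $\xi$, and let $e$ be an edge of $\Lambda$ out of $\tau(\xi)$. Because $\psi$ is an isomorphism of automata preserving distance to the initial states of these contexts, the restriction $\psi|$ maps components of $\nabla(\partial^u \Gamma)$ to components of $\nabla(\partial^{u'}\Gamma)$. The component labelled by $e$ under $\phi_u$ is sent to the component labelled by $e$ under $\phi_{u'}$, since $\phi_{u'}\circ\psi = \phi_u$. Next we apply the commuting square in the proof of \Cref{prop:syntactic}: the new charts are determined by $\phi_{\text{new}} = e^{-1}\circ \phi_{\text{old}}$ restricted to the component. This gives the intertwining relation at the next level.

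Evaluating at states completes the argument. The state $(\pi'\,\xi, q)$ has $q = \phi_{\pi'\xi}(x)$, and $\phi_{\pi''\xi}(\psi(x)) = q$, so $\psi(x) = (\pi''\,\xi, q)$.

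The main obstacle is the step asserting that $\psi$ respects the $\nabla$-filtration. Here the distance $|\cdot|$ inside a context must be measured relative to $\vertti$ of that context; under that reading, \Cref{prop:partial} makes the compatibility immediate.
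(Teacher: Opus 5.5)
Your proposal is correct and follows essentially the paper's route. The first claim rests on the context chain of any state of $\partial^v\Gamma$ passing through $\partial^v\Gamma$, which the paper phrases as concatenating the diagram of \Cref{prop:syntactic} for $v$ with the one obtained inside $\partial^v\Gamma$. The second claim rests on transporting context chains along $\phi_w^{-1}\circ\phi_v$ with compatible charts: your level-by-level induction proving $\phi_{u'}\circ\psi=\phi_u$ is an explicit version of the paper's large commutative diagram, and you also prove the reverse inclusion (that every $(\pi'\,\xi, q)$ lies in $\partial^v\Gamma$), which the paper leaves implicit.
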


\begin{proof}
Let $v'$ be a state of $\partial^v \Gamma$. Then $\partial^{v'} \Gamma \subset \partial^v \Gamma$. By \Cref{prop:syntactic}, there exists a diagram of the form 
\begin{equation}\label{eq:diag1}
\begin{tikzcd}
	{\Gamma=\partial^{v_0}\Gamma} & {\partial^{v_1}\Gamma} & \hdots & {\partial^{v_n}\Gamma=\partial^v \Gamma} \\
	{[\Gamma]=[\partial^{v_0}\Gamma]} & {[\partial^{v_1}\Gamma]} & \hdots & {[\partial^{v_n}\Gamma]=[\partial^v \Gamma]}
	\arrow["{\phi_{v_0}}", from=1-1, to=2-1]
	\arrow[hook', from=1-2, to=1-1]
	\arrow["{\phi_{v_1}}", from=1-2, to=2-2]
	\arrow[hook', from=1-3, to=1-2]
	\arrow[hook', from=1-4, to=1-3]
	\arrow["{\phi_{v_n}}", from=1-4, to=2-4]
	\arrow["{e_1}"', hook', from=2-2, to=2-1]
	\arrow["{e_2}"', hook', from=2-3, to=2-2]
	\arrow["{e_n}"', hook', from=2-4, to=2-3]
\end{tikzcd}
\end{equation}
with $\pi' = e_1 \hdots e_n$, such that 
$$\Gamma = \partial^{v_0} \Gamma \supset \partial^{v_1}\Gamma \supset \hdots \supset \partial^{v_n} \Gamma = \partial^v \Gamma$$
is a context chain. 

Similarly, applying \Cref{prop:syntactic} to $\partial^{v} \Gamma$, we obtain inclusions of the form 
\begin{equation}\label{eq:diag2}
\begin{tikzcd}
	{\partial^v\Gamma=\partial^{v'_0}\Gamma} & {\partial^{v'_1}\Gamma} & \hdots & {\partial^{v'_m}\Gamma=\partial^{v'} \Gamma} \\
	{[\partial^v\Gamma]=[\partial^{v'_0}\Gamma]} & {[\partial^{v'_1}\Gamma]} & \hdots & {[\partial^{v'_m}\Gamma]=[\partial^{v'} \Gamma]}
	\arrow["{\phi_{v'_0}}", from=1-1, to=2-1]
	\arrow[hook', from=1-2, to=1-1]
	\arrow["{\phi_{v'_1}}", from=1-2, to=2-2]
	\arrow[hook', from=1-3, to=1-2]
	\arrow[hook', from=1-4, to=1-3]
	\arrow["{\phi_{v'_n}}", from=1-4, to=2-4]
	\arrow["{e'_1}"', hook', from=2-2, to=2-1]
	\arrow["{e'_2}"', hook', from=2-3, to=2-2]
	\arrow["{e'_m}"', hook', from=2-4, to=2-3]
\end{tikzcd}
\end{equation}
with $\xi = e'_1 \hdots e'_m$ such that $v'_0 = v$ (without loss of generality), and such that 
$$\partial^{v}\Gamma = \partial^{v'_0}\Gamma \supset \partial^{v'_1}\Gamma \supset \hdots \supset \partial^{v'_m}\Gamma = \partial^{v'}\Gamma$$
is a context chain. In particular, concatenating diagrams (\ref{eq:diag1}) and (\ref{eq:diag2}), we obtain that $v'$ has the form $(\pi'\,\xi, q)$ for some $q$.

Let $v' = (\pi' \, \xi, q)$ be a state of $\partial^v \Gamma$. Suppose $v'$ is sent to $w'$ by the transition map $\phi_w^{-1} \circ \phi_v$. Observe that the context chain
$$\partial^v \Gamma = \partial^{v'_0}\Gamma \supset \partial^{v'_1} \Gamma \supset \hdots \supset \partial^{v'_m} \Gamma = \partial^{v'} \Gamma$$
is sent to context chain
$$\partial^w \Gamma = \partial^{w'_0}\Gamma \supset \partial^{w'_1} \Gamma \supset \hdots \supset \partial^{w'_m}\Gamma = \partial^{w'}\Gamma$$
under the transition map. In particular, the diagram
\begin{center}
\begin{tikzcd}
	{\partial^{v'_0}\Gamma} & {\partial^{v'_1}\Gamma} & \hdots & {\partial^{v'_m}\Gamma} \\
	{[\partial^{v'_0}\Gamma]=[\partial^{w'_0}\Gamma]} & {[\partial^{v'_1}\Gamma]=[\partial^{w'_1}\Gamma]} & \hdots & {[\partial^{v'_m}\Gamma]=[\partial^{w'_m}\Gamma]} \\
	{\partial^{w'_0}\Gamma} & {\partial^{w'_1}\Gamma} & \hdots & {\partial^{w'_m}\Gamma}
	\arrow["{\phi_{v'_0}}"', from=1-1, to=2-1]
	\arrow[hook', from=1-2, to=1-1]
	\arrow["{\phi_{v'_1}}"', from=1-2, to=2-2]
	\arrow[hook', from=1-3, to=1-2]
	\arrow[hook', from=1-4, to=1-3]
	\arrow["{\phi_{v'_m}}"', from=1-4, to=2-4]
	\arrow["{e'_1}"', hook', from=2-2, to=2-1]
	\arrow["{e'_2}"', hook', from=2-3, to=2-2]
	\arrow["{e'_m}"', hook', from=2-4, to=2-3]
	\arrow["{\phi_{w'_0}}", from=3-1, to=2-1]
	\arrow["{\phi_{w'_1}}", from=3-2, to=2-2]
	\arrow[hook', from=3-2, to=3-1]
	\arrow[hook', from=3-3, to=3-2]
	\arrow["{\phi_{w'_m}}", from=3-4, to=2-4]
	\arrow[hook', from=3-4, to=3-3]
\end{tikzcd}
\end{center}
commutes. Thus, in a similar fashion to above, $w'$ is identified with a state of the form $(\pi'' \, \xi, \widetilde{q})$ for some $\widetilde{q} \in Q^t$. Moreover, both $v'$ and $w'$ are sent to the same element of $\vertti [\partial^{v'} \Gamma] = \vertti [\partial^{w'} \Gamma]$ by $\phi_{v'}$ and $\phi_{w'}$ respectively. Therefore $\widetilde{q} = q$ and the result follows.
\end{proof}

We now arrive at the main result of this section, adapted from \cite[Lemma 2.3]{ms3}, which allows us to choose canonical representatives of context-free automata.

\begin{lemma}\label{lem:confreemain}
For every transition $e \in \edge \Lambda$ with $t = \tau(e)$, every $q \in Q^t$ and $s \in \Sigma$, there exists a finite set
$$\delta_s(e, q) = \{(\xi_1, q_1), \hdots, (\xi_n, q_n)\}$$
for paths $\xi_1, \hdots, \xi_n$ in $\Lambda^*$ of lengths at most $2$, with $\iota(\xi_i) = \iota(e)$ and, for states $t_i = \tau(\xi_i)$, elements $q_i \in Q^{t_i}$ such that for every path $\pi$ in $\Lambda^*_0$ with $\tau(\pi) = \iota(e)$, 
$$\tau(\st_s((\pi\,e, q), \Gamma)) = \{(\pi \,\xi_1, q_1), \hdots, (\pi \, \xi_n, q_n)\}.$$
\end{lemma}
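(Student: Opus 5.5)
The plan is to define $\delta_s(e,q)$ by a computation in a single reference copy of the parent context, and then transport it to every other copy using the transition maps of \Cref{prop:sub}.

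\textbf{Setup.} Fix $e \in \edge \Lambda$ with $t' = \iota(e)$ and $t = \tau(e)$, and fix $q \in Q^t$ and $s \in \Sigma$. Every state of $\Lambda$ is the representative of some actual context of $\Gamma$. So there is a path $\pi_0 \in \Lambda_0^*$ with $\tau(\pi_0) = t'$, realised by a state $u_0 = (\pi_0, q_0)$ of $\Gamma$. By \Cref{prop:syntactic} and \Cref{prop:sub}, the states of $\partial^{u_0}\Gamma$ are exactly the pairs $(\pi_0\,\xi, \tilde q)$.

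\textbf{Defining $\delta_s(e,q)$.} Let $n = |\pi_0| + 1$ and set $v_0 = (\pi_0\,e, q)$.
\begin{enumerate}[label=(\alph*)]
\item By the identification of states with pairs, $|v_0| = n$, and $v_0$ lies in $\partial^{u_0}\Gamma$, which is the connected component of $\nabla^{n-1}\Gamma$ containing it.
\item Let $w$ be the terminal state of an $s$-transition starting at $v_0$. Since $|\cdot|$ changes by at most one along an edge, we have $n-1 \le |w| \le n+1$.
\item Because $|w| \geq n-1$ and $w$ is joined to $v_0$ by an edge, $w$ lies in the same undirected component of $\nabla^{n-1}\Gamma$, namely $\partial^{u_0}\Gamma$.
\item Hence, by \Cref{prop:sub}, $w = (\pi_0\,\xi, q')$ for some path $\xi$ starting at $t'$ with $q' \in Q^{\tau(\xi)}$.
\item The chart path of a state has length equal to $|w|$, so $|\xi| = |w| - (n-1) \le 2$.
\end{enumerate}
By local finiteness, $\st_s(v_0,\Gamma)$ is finite. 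Define $\delta_s(e,q)$ to be the finite set of pairs $(\xi_i, q_i)$ obtained in this way.

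\textbf{Independence of $\pi$.} Let $\pi$ be any path in $\Lambda_0^*$ with $\tau(\pi) = t'$, realised by a state $u = (\pi, q_0)$. The contexts $\partial^{u_0}\Gamma$ and $\partial^{u}\Gamma$ both have representative $t'$, so they are isomorphic. By \Cref{prop:sub}, the transition map $\phi_u^{-1}\circ\phi_{u_0}$ is an isomorphism of automata given on states by $(\pi_0\,\xi, \tilde q) \mapsto (\pi\,\xi, \tilde q)$. Being an isomorphism, it preserves transitions and labels, so it carries $\st_s(v_0, \Gamma)$ bijectively onto $\st_s((\pi\,e,q), \Gamma)$.

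This uses that the $s$-star of $v_0$ in $\Gamma$ coincides with its $s$-star in $\partial^{u_0}\Gamma$. That holds by step (c): every target of an $s$-transition from $v_0$ already lies in $\partial^{u_0}\Gamma$, which is an induced subautomaton. The same argument applies at $(\pi\,e,q)$. It follows that
$$\tau(\st_s((\pi\,e,q),\Gamma)) = \{(\pi\,\xi_1,q_1), \hdots, (\pi\,\xi_n,q_n)\},$$
as required.

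\textbf{Main obstacle.} The step needing most care is (c), that neighbours of $v_0$ stay inside the parent context $\partial^{u_0}\Gamma$ and not merely inside some context at depth $n-1$. This is where undirected connectivity is essential. Once it is secured, the statement about stars in $\Gamma$ reduces to a statement about a fixed finite-depth neighbourhood inside the representative $t'$, and the rest is bookkeeping with \Cref{prop:sub}.
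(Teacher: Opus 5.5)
Your proof is correct and follows the same route as the paper. You show that every $s$-neighbour of $(\pi\,e,q)$ remains in the parent context, so it has the form $(\pi\,\xi,q')$ with $|\xi|\le 2$, and you then transport this via the transition maps of \Cref{prop:sub} to get independence of $\pi$; your version is slightly cleaner than the paper's, since it bounds $|w|$ directly between $n-1$ and $n+1$ and makes explicit that the $s$-star in $\Gamma$ coincides with the $s$-star in the induced context, which is what justifies applying a transition map that is only an isomorphism of contexts.
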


\begin{proof}
Let $v' = (\pi \, e, q)$ be a state of $\Gamma$ and let $\partial^v \Gamma$ be the context of $\Gamma$ corresponding to $\pi$. Suppose $|v| = n$. We claim that every state $v''$ of $\tau(\st((\pi\, e, q), \Gamma))$ is identified with a pair $(\pi\, \xi, q')$ with $\xi$ a path of length at most 2. 

To see this, observe that, by the triangle inequality, $d(v, v'') \leq 2$. If $d(v, v'') = 0$, the claim is immediate. If $d(v, v'') = 1$, then $|v''| = n + 1$ and the path from $v$ to $v'$ and then to $v''$ lies within $\nabla^n \Gamma$ and so $v$ and $v''$ lie in the same connected component of $\nabla^n \Gamma$. Thus $v''$ lies in $\partial^v \Gamma$. Since $|v''| = n + 1$, $v''$ is identified with a pair $(\pi\, \xi, q')$ with $\xi$ of length $1$. We proceed similarly when $d(v, v'') = 2$.

To see that the $\xi_i$ and $q_i$ do not depend upon the choice of $\pi$, observe that for two paths $\pi'$ and $\pi''$ with $\tau(\pi') = \tau(\pi'') = \iota(e)$ and contexts $\partial^v \Gamma$ and $\partial^w \Gamma$ corresponding to $\pi'$ and $\pi''$ respectively, then the transition map $\phi_w^{-1} \circ \phi_v$ sends 
$$\st_s((\pi' \, e, q), \Gamma) \longrightarrow \st_s((\pi'' \, e, q), \Gamma).$$
The result then follows by \Cref{prop:sub}.
\end{proof} 

\section{Pumping automata}

Henceforth, let $G$ be a group with context-free subgroup $G_*$ and let $\Gamma$ be the Schreier automaton of $G_*$ in $G$ with respect to some symmetric, surjective monoid homomorphism $\rho: \Sigma^* \rightarrow G$. Recall that by \Cref{lem:Schreier}, $\Gamma$ is a context-free automaton. Let $\Lambda$ be the contextual automaton of $\Gamma$ and, throughout, let $N$ denote the number of states of $\Lambda$. 

We have the following trivial consequence of the pigeonhole principle.

\begin{proposition}\label{prop:pump}
Let $\pi$ in $\Lambda^*_0$ be a path of length at least $N$. Then there exist paths $\pi_-$, $\pi_+$ and a (non-trivial) loop $\pi_*$ in $\Lambda^*$ such that 
\begin{equation}\label{eq:pump}
\pi = \pi_- \, \pi_* \, \pi_+
\end{equation}
with $\pi_- \, \pi_*$ of length at most $N$. 
\end{proposition}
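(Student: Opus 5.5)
The plan is a direct pigeonhole argument on the states visited by the first $N$ transitions of $\pi$. Write $\pi = e_1 \, e_2 \hdots e_m$ with $m \geq N$, and consider the sequence of states
$$t_0 = \iota(e_1), \quad t_1 = \tau(e_1), \quad \hdots, \quad t_N = \tau(e_N)$$
of $\Lambda$. Since $\pi \in \Lambda^*_0$, $t_0 = [\Gamma]$ is the initial state, but this plays no role beyond fixing $\pi_-$ as a path in $\Lambda^*_0$. The sequence lists $N + 1$ states, while $\Lambda$ has exactly $N$ states. Hence there exist indices $0 \leq i < j \leq N$ with $t_i = t_j$.

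Next define
$$\pi_- = e_1 \hdots e_i, \qquad \pi_* = e_{i+1} \hdots e_j, \qquad \pi_+ = e_{j+1} \hdots e_m,$$
where $\pi_-$ is empty if $i = 0$ and $\pi_+$ is empty if $j = m$. We then verify the required properties in turn.
\begin{enumerate}[label=(\roman*)]
\item $\pi_*$ is a loop, since $\iota(\pi_*) = t_i = t_j = \tau(\pi_*)$.
\item $\pi_*$ is non-trivial, since it has length $j - i \geq 1$.
\item The concatenation $\pi_- \, \pi_* \, \pi_+$ is defined and equals $\pi$, so (\ref{eq:pump}) holds.
\item $\pi_- \, \pi_*$ has length $j \leq N$.
\end{enumerate}

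There is no real obstacle here. The only points needing care are bookkeeping conventions. Empty paths must be allowed for $\pi_-$ and $\pi_+$, interpreted as trivial paths at the appropriate state, consistent with treating $\Lambda^*$ as containing the empty path. We also choose the $N + 1$ states among the first $N$ edges, rather than across all of $\pi$, precisely so that the length bound on $\pi_- \, \pi_*$ follows.
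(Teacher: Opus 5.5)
Your proposal is correct and is exactly the pigeonhole argument the paper intends; the paper gives no proof beyond calling the statement a trivial consequence of the pigeonhole principle. Among the first $N$ transitions, the $N+1$ states $t_0, \hdots, t_N$ must repeat among the $N$ states of $\Lambda$, which yields the non-trivial loop $\pi_*$ with $\pi_-\,\pi_*$ of length $j \leq N$.
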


\begin{definition}[Pumping reductions]
Let $(\pi, q)$ be a state of $\Gamma$. We call $(\pi, q)$ \emph{reducible} if $\pi$ can be written in the form (\ref{eq:pump}) (where we allow $\pi$ to have length less than $N$), and otherwise \emph{irreducible}. For a reducible state $(\pi, q)$, the \emph{pumping reduction} of $(\pi, q)$ is $(\pi_-\, \pi_+, q)$, where $\pi_- \, \pi_*$ is chosen to have minimal length. We denote this by 
$$(\pi_- \, \pi_+, q) \impliedby (\pi, q).$$ 
Observe that this reduction is unique for a given $(\pi, q)$, since this is equivalent to $\iota(\pi_*) = \tau(\pi_*)$ being the first state to be visited twice by $\pi$. In particular, we may consider $\pi_-, \pi_*$ and $\pi_+$ as functions of $\pi$, where defined. The \emph{full pumping reduction} of $(\pi, q)$ is the unique sequence
\begin{equation}\label{eq:pumping}
\star \impliedby q \impliedby (\pi_0, q) \impliedby (\pi_1, q) \impliedby \hdots \impliedby (\pi_n, q),
\end{equation}
such that 
$$(\pi_i, q) \impliedby (\pi_{i + 1}, q)$$
is a pumping reduction for every $i = 1, \hdots, n - 1$, $(\pi_0, q)$ is irreducible, and $\pi_n = \pi$. 

For a path $\pi = e_1 \hdots e_n$, define the \emph{truncation} $\pi|_N$ to be the unique prefix of $\pi$ of length $N$ if $n \geq N$ and $\pi$ otherwise. 
\end{definition}

We make the following observation. 

\begin{proposition}\label{prop:uniquepumping}
Let $\pi_0$ and $\pi_1$ be paths in $\Lambda^*_0$, and suppose
$$(\pi_0, q) \impliedby (\pi_1, q)$$
is a pumping reduction. Suppose $(\pi_0', q)$ is a state of $\Gamma$ with $\pi'_0|_N = \pi_0|_N$. Then there exists a unique path $\pi'_1$ in $\Lambda^*_0$ such that $\pi'_1|_N = \pi_1|_N$ and 
$$(\pi'_0, q) \impliedby (\pi'_1, q)$$
is a pumping reduction.
\end{proposition}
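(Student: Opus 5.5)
Since the paper's convention is that $(\pi_0, q) \impliedby (\pi_1, q)$ means $(\pi_0,q)$ is the reduction of $(\pi_1,q)$, the hypothesis gives $\pi_1 = \pi_-\,\pi_*\,\pi_+$ and $\pi_0 = \pi_-\,\pi_+$. Here $\pi_-\,\pi_*$ is the shortest prefix of $\pi_1$ ending at a state of $\Lambda$ visited earlier, and it has length at most $N$ by \Cref{prop:pump}. The plan is to write down the only possible candidate for $\pi'_1$, namely
$$\pi'_1 = \pi_-\,\pi_*\,\pi'_+, \qquad \text{where } \pi'_0 = \pi_-\,\pi'_+,$$
and then check three things: that $\pi'_0$ really has $\pi_-$ as a prefix, that the reduction of $(\pi'_1,q)$ is $(\pi'_0,q)$, and that this $\pi'_1$ is the only path with these properties.

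\textbf{Step 1: $\pi'_0$ begins with $\pi_-$.} Since $\pi_-\,\pi_*$ has length at most $N$ and $\pi_*$ is non-trivial, $\pi_-$ has length less than $N$. It is a prefix of $\pi_0$, hence of $\pi_0|_N = \pi'_0|_N$, hence of $\pi'_0$. So we may write $\pi'_0 = \pi_-\,\pi'_+$. As $\tau(\pi_-) = \iota(\pi_*) = \tau(\pi_*)$, the concatenation $\pi'_1 = \pi_-\,\pi_*\,\pi'_+$ is a path in $\Lambda^*_0$. It is a state of $\Gamma$ with the same $q$, because the pair $(\pi'_1,q)$ only requires $q \in Q^{\tau(\pi'_1)}$ and $\tau(\pi'_1) = \tau(\pi'_0)$.

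\textbf{Step 2: the reduction of $(\pi'_1,q)$ is $(\pi'_0,q)$.} The pumping decomposition of a path is determined by the first state of $\Lambda$ that the path visits twice. That event is read off from the prefix $\pi_-\,\pi_*$, which $\pi_1$ and $\pi'_1$ share. So $\pi'_1$ decomposes with the same $\pi_-$ and $\pi_*$, with $\pi'_+$ as its remainder, and its reduction is $\pi_-\,\pi'_+ = \pi'_0$.

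\textbf{Step 3: $\pi'_1|_N = \pi_1|_N$.} Both $\pi_1$ and $\pi'_1$ begin with $\pi_-\,\pi_*$. Let $k$ be the length of $\pi_*$ and $m$ the length of $\pi_-$. After this common prefix, the next $N - m - k$ edges of $\pi_1$ are the first $N-m-k$ edges of $\pi_+$, and those of $\pi'_1$ are the first $N-m-k$ edges of $\pi'_+$. These edges are edges $m+1$ through $N-k$ of $\pi_0$ and of $\pi'_0$ respectively, which lie inside $\pi_0|_N = \pi'_0|_N$, so they agree.

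This comparison needs some care when a path is shorter than $N$, since truncation then returns the whole path. If $\pi'_0$ has length less than $N$, then $\pi'_0 = \pi_0$ and hence $\pi'_1 = \pi_1$. If only $\pi_0$ is shorter than $N$, the same equality $\pi'_0|_N = \pi_0|_N$ forces $\pi'_0 = \pi_0$ again. These are the only cases that need separate treatment; this length bookkeeping is the main place where care is needed.

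\textbf{Step 4: uniqueness.} Suppose $\pi''_1$ has $\pi''_1|_N = \pi_1|_N$ and reduces to $\pi'_0$. Since $\pi_-\,\pi_*$ has length at most $N$, $\pi''_1$ has $\pi_-\,\pi_*$ as a prefix. By the first-repetition characterisation it therefore decomposes with the same $\pi_-$ and $\pi_*$, say as $\pi_-\,\pi_*\,\pi''_+$. Its reduction is $\pi_-\,\pi''_+$, which must equal $\pi'_0 = \pi_-\,\pi'_+$, so $\pi''_+ = \pi'_+$ and $\pi''_1 = \pi'_1$.
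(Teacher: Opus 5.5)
Your proof is correct and takes the same route as the paper. You write $\pi'_0 = \pi_-\,\pi'_+$ and reinsert the loop to get $\pi'_1 = \pi_-\,\pi_*\,\pi'_+$, relying on the fact that the first repeated state is detected within the shared prefix of length at most $N$; you also supply the length bookkeeping and the uniqueness check that the paper calls clear (and the paper's $(\pi_0)_-$, $(\pi_0)_*$ are really the decomposition of $\pi_1$, as you correctly use).
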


\begin{proof}
Observe that we may write $\pi'_0$ uniquely in the form $\pi'_0 = (\pi_0)_- \, \xi$. Then it is clear that $\pi'_1 = (\pi_0)_- \, (\pi_0)_* \, \xi$ is unique such that $(\pi'_1, q)$ satisfies the desired properties.
\end{proof}

\begin{definition}[Pumping automata]
T`he \emph{pumping automaton} of $\Gamma$ is the finite unlabelled automaton $\Delta$ which has states of the forms:
\begin{enumerate}[label = (\roman*)]
\item $\star$, which is also the unique initial state;
\item $q \in Q^t$, for some state $t$ of $\Lambda$;
\item a state of the form $(\pi|_N, q)$, for $(\pi, q)$ a state of $\Gamma$;
\end{enumerate}
and transitions of the forms:
\begin{enumerate}[label = (\roman*)]
\item from $\star$ to $q$, for $q \in Q^t$;
\item from $q \in Q^t$ to a state of the form $(\pi|_N, q)$, where $(\pi, q)$ is irreducible;
\item from $\pi_0|_N$ to $\pi_1|_N$, where $(\pi_0, q) \impliedby (\pi_1, q)$ is a pumping reduction. 
\end{enumerate}
By applying \Cref{prop:uniquepumping} repeatedly, a full pumping reduction of the form (\ref{eq:pumping}) corresponds uniquely to a path of the form
\begin{equation}\label{eq:newreduction}
\star \longrightarrow q \longrightarrow \pi_0|_N \longrightarrow \pi_1|_N \longrightarrow \hdots \longrightarrow \pi_n|_N
\end{equation}
in $\Delta^*_0$. In particular, we may uniquely identify a state of $\Gamma$ with the path of the form (\ref{eq:newreduction}) in $\Delta^*_0$, and allow $G$ to act upon $\Delta^*_0$ on the right, fixing the paths of lengths 0 and 1 and otherwise acting on $\Delta^*_0$ by identification with the states of $\Gamma$.
\end{definition}

\begin{proposition}\label{prop:goodpump}
Let $s \in \Sigma$, and let $(\pi \, e, q)$ and $(\pi \, \xi, q')$ be states of $\Gamma$. Suppose that $(\pi \, e, q)$ and $(\pi \, \xi, q')$ have full pumping reductions
$$\star \impliedby q \impliedby (\pi_0, q) \impliedby (\pi_1, q) \impliedby \hdots \impliedby (\pi_n, q)$$
and
$$\star \impliedby q' \impliedby (\pi'_0, q') \impliedby (\pi'_1, q') \impliedby \hdots \impliedby (\pi'_m, q')$$
respectively. Suppose that there exists a maximal $k$ such that $\pi_{n - k}$ has length at least $N + 1$. Then $n - k \leq N + 1$, and there exist paths $\pi''_i$ in $\Delta^*_0$ of length at least $N$ for $i = 0, \hdots, k$ such that 
$$\pi_{n - i} = \pi''_i \, e, \qquad \pi'_{m - i} = \pi''_i \, \xi.$$ 
In particular, for $i = 0, \hdots, k$, $\pi_{n - i}|_N = \pi'_{m - i}|_N$. 
\end{proposition}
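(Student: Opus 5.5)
Throughout, $\xi$ is a path of length at most $2$ in $\Lambda^*$ with $\iota(\xi)=\iota(e)$, as produced by \Cref{lem:confreemain}. The statement really concerns paths in $\Lambda^*_0$: the $\pi''_i$ are prefixes of paths in $\Lambda$, and their truncations are what appear as states of $\Delta$. The strategy is to run the two full pumping reductions of $\pi\,e$ and $\pi\,\xi$ in parallel, starting from the top. At each stage we show that they excise the same loop. The point is that a pumping reduction of a path $\rho$ only depends on the first point at which $\rho$ revisits a state of $\Lambda$. That point lies within the first $N+1$ states visited, hence within the prefix of length $N$. So if two paths share a long common prefix, they are reduced in exactly the same way.

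Concretely, I would proceed by induction on $i = 0, \hdots, k$, proving that $\pi_{n-i} = \pi''_i\,e$ and $\pi'_{m-i} = \pi''_i\,\xi$ for a common path $\pi''_i$ of length at least $N$. For $i=0$ take $\pi''_0 = \pi$. Its length is at least $N$, because $\pi_n = \pi\,e$ has length at least $N+1$ (the maximal $k$ exists and lengths only decrease along the reduction, so in particular $\pi_n$ itself has length at least $N+1$).

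For the inductive step, suppose $\pi_{n-i} = \pi''_i\,e$ with $i<k$. Then $\pi_{n-i-1}$ also has length at least $N+1$ by the choice of $k$, and it is obtained from $\pi_{n-i}$ by a pumping reduction. As in \Cref{prop:pump}, the loop $(\pi_{n-i})_*$ is located by the first repeated state of $\Lambda$ along $\pi_{n-i}$, and $(\pi_{n-i})_-\,(\pi_{n-i})_*$ has length at most $N$. Hence this loop lies inside $\pi''_i$, so $\pi''_i = \alpha\,\beta\,\gamma$ with $\beta$ the loop, and $\pi_{n-i-1} = \alpha\,\gamma\,e$. The path $\pi''_i\,\xi$ visits the same initial states, so its first repeated state, and hence its pumping reduction, is the same: $\alpha\,\gamma\,\xi$. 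Here one must check that $\pi''_i\,\xi$ is actually reducible and that the reduction sequence for $(\pi\,\xi,q')$ reaches this step as $\pi'_{m-i-1}$. Both follow from uniqueness of the full reduction and because $\pi''_i$ alone already contains a repeat. Put $\pi''_{i+1} = \alpha\,\gamma$. Its length is $|\pi_{n-i-1}|-1 \geq N$, which completes the induction. The final claim about truncations follows at once, since $\pi''_i$ has length at least $N$ and is a common prefix.

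The bound $n-k \leq N+1$ is where the main care is needed. Once $\pi_{n-k}$ has length at least $N+1$ but $\pi_{n-k-1}$ does not, every further reduction step removes a nonempty loop from a path of length at most $N$. So there are at most $N$ further steps before reaching the irreducible $\pi_0$, which gives $n-k-1 \leq N$.

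I expect the main obstacle to be the bookkeeping that aligns the indices $n-i$ and $m-i$. The difficulty is that the tail of $\pi\,\xi$ differs from that of $\pi\,e$, in that $\xi$ may have length $0$, $1$ or $2$. We must ensure that no reduction of $\pi\,\xi$ uses a loop touching $\xi$ while the path is still long. This is guaranteed because the first repetition always occurs within the first $N+1$ visited states, which lie in $\pi''_i$ whenever $|\pi''_i| \geq N$.
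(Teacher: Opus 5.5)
Your proposal is correct and follows essentially the same route as the paper. Both arguments induct on $i$, using that a pumping reduction is determined by the first repeated state of $\Lambda$. That state lies in the common prefix $\pi''_i$ of length at least $N$, so $\pi''_i\,e$ and $\pi''_i\,\xi$ excise the same loop. Both also obtain $n-k\leq N+1$ from the strict decrease of lengths, which you spell out more carefully than the paper does, and you rightly read the paper's $\Delta^*_0$ as $\Lambda^*_0$.
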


\begin{proof}
The fact that $n - k \leq N + 1$ is immediate from the fact that the lengths of $\pi_i$ are monotone increasing in $i$. As for the second claim, we proceed by induction upon $i$. Suppose that we have the claim for $i$, and that $\pi_{n - (i + 1)}$ has length at least $N + 1$. By the induction hypothesis, $\pi_i''$ has length at least $N$, and so there exists a unique pumping reduction
$$(\pi''_{i + 1}, q) \impliedby (\pi''_i, q).$$
Thus both
$$(\pi''_{i + 1} \, e) \impliedby (\pi''_i \, e, q), \qquad (\pi''_{i + 1} \, \xi, q') \impliedby (\pi''_i \, \xi, q')$$
are pumping reductions, whence as $\pi'_{n - i} = \pi''_i \, e$ and $\pi'_{m - i} = \pi''_i \, \xi$, we deduce that $\pi'_{n - (i + 1)} = \pi''_{i + 1} \, e$ and $\pi'_{m - (i + 1)} = \pi''_{i + 1} \, \xi$ by the uniqueness of pumping reductions. In particular, as $\pi'_{n - (i + 1)} = \pi''_{i + 1} \, e$ has length at least $N + 1$, $\pi''_{i + 1}$ has length at least $N$, and the claim follows by induction. In particular,
$$\pi_{n - i}|_N = \pi'_{m - i}|_N = \pi''_i|_N$$
and the result follows.
\end{proof}

\begin{definition}[Short prefixes]
Let $\delta$ be a path in $\Delta^*_0$ which corresponds to the full pumping reduction
$$\star \impliedby q \impliedby (\pi_0, q) \impliedby (\pi_1, q) \impliedby \hdots \impliedby (\pi_n, q).$$
Suppose $\pi_n$ has length at least $N + 1$, and let $j$ minimal such that $\pi_j$ has length at least $N + 1$. We call the path
$$\star \longrightarrow q \longrightarrow \pi_0|_N \longrightarrow \pi_1|_N \longrightarrow \hdots \longrightarrow \pi_j|_N$$
in $\Delta^*_0$ the \emph{short prefix} of $\delta$, which is unique where it exists. Observe that we may similarly define the short prefix of an element of $\Delta^\omega_0$, and that short prefixes have length at most $N + 3$ (since by \Cref{prop:goodpump}, $j \leq N + 1$). Moreover, every path of length at least $N + 3$ (and in particular, every right-infinite path) of $\Delta^{\leq \omega}_0$ has a short prefix. 
\end{definition}

\begin{proposition}\label{prop:finalhurdle}
Let $\delta'$ in $\Delta^*_0$ be a short prefix. Then for every path $\zeta$ in $\Delta^*$ with $\tau(\delta') = \tau(\delta'') = \iota(\zeta')$, then $(\delta' \, \zeta) \cdot \rho(s) = (\delta' \cdot \rho(s)) \, \zeta$. 
\end{proposition}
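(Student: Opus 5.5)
Write the path $\delta'\,\zeta$ as the full pumping reduction
$$\star \impliedby q \impliedby (\pi_0, q) \impliedby \hdots \impliedby (\pi_n, q),$$
so that $\delta'\,\zeta$ represents the state $v = (\pi_n, q)$ of $\Gamma$. Here $\delta'$ is the initial segment ending at $\pi_j|_N$, where $j$ is minimal with $\pi_j$ of length at least $N+1$. The plan is to reduce the action of $\rho(s)$ on $v$ to a purely local computation near the end of $\pi_n$, using \Cref{lem:confreemain}. Then we use \Cref{prop:goodpump} to see that this local change only alters the short prefix part of the reduction, while the later pumping steps, recorded by $\zeta$, are copied verbatim.

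First, since $\Gamma$ is fully deterministic and symmetric, $\st_s(v,\Gamma)$ is a single transition. Writing $\pi_n = \pi\,e$, \Cref{lem:confreemain} gives $v\cdot\rho(s) = (\pi\,\xi, q')$, with $\xi$ a path of length at most $2$ from $\iota(e)$ and $(\xi,q')$ depending only on $(e,q,s)$. Let the full pumping reduction of $(\pi\,\xi,q')$ be indexed by $\pi'_0,\hdots,\pi'_m$. By \Cref{prop:goodpump}, for $i=0,\hdots,k$ with $k$ maximal such that $\pi_{n-k}$ has length at least $N+1$, we have $\pi_{n-i}=\pi''_i\,e$ and $\pi'_{m-i}=\pi''_i\,\xi$. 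In particular $\pi_{n-i}|_N = \pi'_{m-i}|_N$. Minimality of $j$ gives $n-k=j$. Hence the last $k$ transitions of the path in $\Delta$ coding $(\pi\,\xi,q')$ agree with the last $k$ transitions of $\delta'\,\zeta$, which are precisely $\zeta$, because the $\Delta$-transitions are determined by pairs of consecutive truncations.

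It remains to identify the beginning of the path for $(\pi\,\xi,q')$, up to $\pi'_{m-k}|_N$, with $\delta'\cdot\rho(s)$. The element $\delta'$ itself represents the state $(\pi_j,q)=(\pi''_k\,e,q)$, and applying the same argument gives $\delta'\cdot\rho(s) = (\pi''_k\,\xi,q')$. The full pumping reduction of $(\pi''_k\,\xi,q')$ is exactly the initial segment $\pi'_0,\hdots,\pi'_{m-k}$ of the reduction of $(\pi\,\xi,q')$, by uniqueness of pumping reductions. Therefore the $\Delta$-path of $\delta'\cdot\rho(s)$ ends at $\pi'_{m-k}|_N = \pi_j|_N = \tau(\delta')$, and concatenating it with $\zeta$ is well defined and equals the path of $(\delta'\,\zeta)\cdot\rho(s)$.

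The main obstacle is the bookkeeping in this last step. One must check that the pumping reductions of the longer path pass through exactly $(\pi''_k\,\xi,q')$, that is, that the loops removed at stages $k,\hdots,1$ occur in the common prefix $\pi''_i$ and not across $\xi$. The inductive argument of \Cref{prop:goodpump}, which locates the first repeated state within the first $N+1$ edges, is what ensures this. One also has to treat separately the degenerate case where $\xi$ shortens the path, for instance if $\xi$ has length $0$. This is harmless because $\pi''_k$ already has length at least $N$.
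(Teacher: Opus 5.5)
Your proposal is correct and follows essentially the same route as the paper: apply \Cref{lem:confreemain} to $(\pi\,e,q)$, use \Cref{prop:goodpump} to identify the tail of the $\Delta$-path of $(\pi\,\xi,q')$ with $\zeta$, and apply \Cref{lem:confreemain} again to $(\pi''_k\,e,q)$ to identify the head $\pi'_0,\hdots,\pi'_{m-k}$ with $\delta'\cdot\rho(s)$. Your explicit remark that the full pumping reduction of $(\pi''_k\,\xi,q')$ is the initial segment of the reduction of $(\pi\,\xi,q')$ spells out a step the paper leaves implicit.
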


\begin{proof}
Let $\delta = \delta'\, \zeta$. Observe that $\delta$ has length at least $3$, and so in particular corresponds to a state of the form $(\pi \, e, q)$ in $\Gamma$. By \Cref{lem:confreemain}, there exists a path $\xi$ in $\Lambda^*$ of length at most $2$ and a $q'$ such that $(\pi \, e, q) \cdot \rho(s) = (\pi \, \xi, q')$. Suppose that $(\pi \, e, q)$ and $(\pi \, \xi, q')$ have full pumping reductions
$$\star \impliedby q \impliedby (\pi_0, q) \impliedby (\pi_1, q) \impliedby \hdots \impliedby (\pi_n, q)$$
and
$$\star \impliedby q' \impliedby (\pi'_0, q') \impliedby (\pi'_1, q') \impliedby \hdots \impliedby (\pi'_m, q')$$
respectively. Then the path $\delta$ is
$$\star \longrightarrow q \longrightarrow \pi_0|_N \longrightarrow \pi_1|_N \longrightarrow \hdots \longrightarrow \pi_n|_N,$$
and the path $\delta \cdot \rho(s)$ is
$$\star \longrightarrow q' \longrightarrow \pi'_0|_N \longrightarrow \pi'_1|_N \longrightarrow \hdots \longrightarrow \pi'_m|_N.$$
Let $k$ be maximal such that $\pi_{n - k}$ has length at least $N + 1$, namely, so that $\delta'$, the short prefix of $\delta$, is precisely
$$\star \longrightarrow q \longrightarrow \pi_0|_N \longrightarrow \pi_1|_N \longrightarrow \hdots \longrightarrow \pi_{n - k}|_N.$$
By \Cref{prop:goodpump}, the paths
$$\pi_{n - k} |_N \longrightarrow \pi_{n - k + 1}|_N \longrightarrow \hdots \longrightarrow \pi_n|_N$$
and 
$$\pi'_{m - k}|_N \longrightarrow \pi'_{m - k + 1}|_N \longrightarrow \hdots \longrightarrow \pi'_m|_N$$
are both equal, in particular, equal to $\zeta$. Let $\delta''$ denote the path
$$\star \longrightarrow q' \longrightarrow \pi'_0|_N \longrightarrow \pi'_1|_N \longrightarrow \hdots \longrightarrow \pi'_{m - k}|_N.$$
In this case, we have that $(\delta' \, \zeta) \cdot \rho(s) = \delta'' \, \zeta$.

We claim that $\delta'' = \delta' \cdot \rho(s)$. Observe that, with $\pi''_i$ as in \Cref{prop:goodpump}, $\pi_{n - k} = \pi''_k \, e$ and $\pi_{m - k} = \pi''_k \, \xi$, so by \Cref{lem:confreemain}, $(\pi_{n - k}, q) \cdot \rho(s) = (\pi'_{m - k}, q')$. In particular, as $(\pi_{n - k}, q)$ and $(\pi'_{m - k}, q')$ are identified with $\delta'$ and $\delta''$ respectively, we have that $\delta'' = \delta' \cdot \rho(s)$, and the result follows.
\end{proof}

\begin{lemma}\label{lem:finalstep}
There exists a homomorphism $\phi: G \rightarrow [\mcal{G}(\Delta_+)]$ with kernel $\core(G_*)$. Moreover, $G_* = \phi^{-1}\,[\mcal{G}(\Delta_+)]_*$ for some stabiliser $[ \mcal{G}(\Delta_+) ]_*$ of $[ \mcal{G}(\Delta_+) ]$ under the action of $(\Delta_+)^\omega_0$, for a point in $\Delta_0^*$.
\end{lemma}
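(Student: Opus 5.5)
We build $\phi$ from the action of $G$ on $\Delta^*_0$ described in the definition of the pumping automaton. Each state of $\Gamma$ is identified with a unique path of the form (\ref{eq:newreduction}), and $G$ acts on these by right multiplication on cosets, fixing the paths of length $0$ and $1$. We extend this to all of $(\Delta_+)^\omega_0$ and check that the result lies in $[\mcal{G}(\Delta_+)]$. By \Cref{prop:density}, $\Delta_0^*$ is identified with a dense set of isolated points, and its complement is $\Delta^\omega_0$. So it suffices to do two things: define the action on right-infinite paths, and show that each generator $\rho(s)$, $s \in \Sigma$, acts locally as a prefix map.

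\textbf{Step 1: each generator acts by prefix maps near infinite and long paths.} Let $s \in \Sigma$ and let $\delta$ be a path of length at least $N+3$, or a right-infinite path. Then $\delta$ has a short prefix $\delta'$ of length at most $N+3$, and the prefix cone $\delta'(\Delta_+)^\omega$ is an open neighbourhood of $\delta$. By \Cref{prop:finalhurdle}, $(\delta'\zeta)\cdot\rho(s) = (\delta'\cdot\rho(s))\,\zeta$ for every finite continuation $\zeta$. We therefore define $\rho(s)$ on the whole cone $\delta'(\Delta_+)^\omega$ as the prefix map $\delta'\eta \mapsto (\delta'\cdot\rho(s))\,\eta$. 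This formula agrees with the coset action on the dense set of finite paths in the cone, so it is consistent.

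\textbf{Step 2: the remaining points.} The other points are finite paths of length less than $N+3$, together with the two short paths ($\star$ and $\star\to q$) that are fixed by definition. Each of these is an isolated point of the form $\pi\,e_v\,(e_+)^\omega$. On the singleton cone $\pi\,e_v(\Delta_+)^\omega$ the action is again a prefix map, namely $\pi e_v \mapsto \pi' e_{v'}$. Finitely many short prefix cones, together with finitely many such singletons, cover the compact space $(\Delta_+)^\omega_0$. Hence $\rho(s)$ is locally in $\mcal{G}(\Delta_+)$.

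Both pieces are bijective because $\overline{s}$ acts as the inverse on the dense set, so the extension of $\rho(s)$ is a homeomorphism lying in $[\mcal{G}(\Delta_+)]$. Since the action on the dense set is a group action, extension by continuity gives a homomorphism $\phi: G \to [\mcal{G}(\Delta_+)]$.

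\textbf{Step 3: kernel and stabiliser.} Suppose $g \in G$ acts trivially on the dense set $\Delta_0^*$. Then $g$ acts trivially on $(\Delta_+)^\omega_0$ by continuity. Conversely, if $\phi(g)$ is trivial, then $g$ fixes every coset. The action on $\Delta_0^*$ is, apart from the fixed short paths, exactly the action on the states $G_*\setminus G$ of $\Gamma$. Hence $\ker\phi$ is the kernel of the action on cosets, which is $\core(G_*)$ by \Cref{rem:schreier}. Now take the point of $\Delta_0^*$ corresponding to the initial state $G_*$. Its stabiliser in $G$ under $\phi$ is exactly $G_*$, so $G_* = \phi^{-1}[\mcal{G}(\Delta_+)]_*$ for this point stabiliser.

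\textbf{Main obstacle.} The delicate step is Step 1. We must make sure that the prefix map defined on the cone of a short prefix is well defined and independent of which long path in the cone we start from. Concretely, we must check that every path in the cone $\delta'(\Delta_+)^\omega$ has the same short prefix $\delta'$, so that \Cref{prop:finalhurdle} applies uniformly on the cone. This should follow because the short prefix is defined by the minimal index $j$ at which $\pi_j$ has length at least $N+1$, and this index is visible in the prefix itself. A second point to verify is that $\tau(\delta'\cdot\rho(s)) = \tau(\delta')$, which is needed for the prefix map to make sense. For this we use the equality $\pi_{n-k}|_N = \pi'_{m-k}|_N$ from \Cref{prop:goodpump}.
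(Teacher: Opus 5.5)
Your proposal is correct and follows essentially the paper's own proof. You extend the coset action from the dense set $\Delta^*_0$ to $\Delta^\omega_0$ by the prefix-map formula on the cone of each short prefix, justified by \Cref{prop:finalhurdle}; you treat finite paths as isolated points, where the action is automatically a prefix map; and you read off the kernel $\core(G_*)$ and the stabiliser of the point corresponding to $G_*$ from \Cref{rem:schreier} and density. The extra checks you single out are used only implicitly in the paper, and your justifications for them are sound: that the short prefix is constant on its cone, that $\tau(\delta'\cdot\rho(s))=\tau(\delta')$ via \Cref{prop:goodpump}, and bijectivity via $\overline{s}$ and density. The compactness cover is harmless but not needed.
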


\begin{proof}
We claim that the action of $G$ upon $\Delta^*_0$ extends continuously to a unique action upon $(\Delta_+)^\omega_0$ by elements of $[ \mcal{G}(\Delta_+) ]$, whence we obtain a homomorphism $\phi: G \rightarrow [\mcal{G}(\Delta_+)]$. In this case, we are done, since $G_*$ is the preimage of the stabiliser of the point of $\Delta^*_0$ corresponding to the unique state of $\vertti \Gamma$, under the action of $G$ upon $(\Delta_+)^\omega_0$. In particular, the action of $G$ on $\Delta^*_0$ has kernel $\core(G_*)$ by \Cref{rem:schreier}, and hence as $\Delta^*_0$ is dense in $(\Delta_+)^\omega_0$ by \Cref{prop:density}, $\phi$ has kernel $\core(G_*)$ as well.

Let $G$ act upon $\Delta^\omega_0$, as contained in $(\Delta_+)^\omega_0$, as follows. Let $\delta'$ be a short prefix in $\Delta^*_0$, and for every $\zeta \in \Delta^\omega$ with $\iota(\zeta) = \tau(\delta')$ and $s \in \Sigma$,
$$(\delta' \, \zeta) \cdot \rho(s) = (\delta' \cdot \rho(s)) \, \zeta.$$
This is well-defined by the fact that every right-infinite path has a short prefix. Moreover, by \Cref{prop:finalhurdle} and the density result of \Cref{prop:density}, it is the unique continuous extension of the action of $G$ on $\Delta^*_0$ to $\Delta^\omega_0$, so is in particular is also a group action in and of itself.

It now suffices to show that $G$ acts by elements of $[ \mcal{G}(\Delta_+) ]$. Let $s \in \Sigma$, and let $\delta$ be a path in $(\Delta_+)^\omega_0$. It suffices to show that there exists an open neighbourhood $U$ of $\delta$ such that $\phi(\rho(s))|_U \in \mcal{G}(\Delta_+)$. If $\delta$ belongs to $\Delta^*_0$, by \Cref{prop:density}, the point $\delta$ is isolated, and so we may take $U = \{\delta\}$. Since $\delta \cdot \rho(s)$ also belongs to $\Delta^*_0$ (that is, also has $(e_+)^\omega$ as a suffix), we have that $\rho(s)|_U$ is indeed a prefix map. 

Otherwise, if $\delta$ belongs to $\Delta^\omega_0$, let $\delta'$ be the short prefix of $\delta$, and let $U = \delta' \, (\Delta_+)^\omega$. Then, for $\delta'' = \delta' \cdot \rho(s)$, we have that the restriction of $\phi(\rho(s))$ to $U$ acts by
$$\phi(\rho(s))|_U: \delta' \, (\Delta_+)^\omega \longrightarrow \delta'' \, (\Delta_+)^\omega, \qquad \delta' \, \zeta \longmapsto \delta'' \,\zeta$$
for all $\zeta \in (\Delta_+)^\omega$ with $\iota(\zeta) = \tau(\delta')$, and in particular belongs to $\mcal{G}(\Delta_+)$. The result follows.
\end{proof}

\begin{proof}[Proof of `only if' in \Cref{thm:main}]
Follows immediately from combining \Cref{lem:embed} with \Cref{lem:finalstep}.
\end{proof}

\begin{proof}[Proof of `only if' in \Cref{cor:main}]
Let $G_1, \hdots, G_n$ be context-free subgroups of $G$. Then the maps $\phi_i: G \rightarrow V$ constructed in \Cref{thm:main} have kernels $\core(G_i)$ by \Cref{lem:finalstep}. Therefore, the direct sum of these homomorphisms
$$G \longrightarrow V \oplus V \oplus \hdots \oplus V$$
has kernel 
$$\bigcap_{i = 1}^n \core(G_i) = 1,$$
that is, is an embedding.
By \Cref{rem:vprops}, there exists an embedding
$$V \oplus V \oplus \hdots \oplus V \longhookrightarrow V.$$
Composing these, the result follows.
\end{proof}

\section{The `if' direction}\label{section:pda}

In this section we detail the `if' directions of the proofs of \Cref{thm:main} and \Cref{cor:main}. We adapt our proofs from the papers of \cite{birget} and \cite{lehnert} which, while they do not contain the results we require explicitly, contain results in a similar spirit.

Firstly, we introduce a notion from computer science from \cite[6.1.2]{hopcroft}.

\begin{definition}[Pushdown automata]\label{def:pda}
Let $(Q, \Sigma, Z, \delta, q_0, Q_1, \zeta_0)$ be a $7$-tuple, comprising:
\begin{enumerate}[label = (\roman*)]
\item A finite set $Q$ of \emph{positions};
\item An \emph{input alphabet} $\Sigma$ and \emph{stack alphabet} $Z$;
\item \emph{Transition functions} $\delta_s$ from $(Z \cup \{\ve\}) \times Q$ to (possibly empty) finite subsets of $Z^* \times Q$ for each $s \in \Sigma \cup \{\ve\}$;
\item A choice of an \emph{initial state} $q_0 \in Q$ and \emph{terminal states} $Q_1 \subset Q$ and the \emph{initial stack} $\zeta_0 \in Z^*$. 
\end{enumerate}

The \emph{pushdown automaton} induced by this tuple is defined as follows. Let $\Gamma$ be the automaton whose states are pairs of the form $(\zeta, q)$ for $\zeta \in Z^*$ (the \emph{stack}) and $q \in Q$ (the \emph{position}) and with transitions of the forms:
\begin{enumerate}[label = (\roman*)]
\item from $(\zeta \, z, q)$ to $(\zeta \, \zeta', q')$ labelled by $s$ for all $(\zeta', q') \in \delta_s(z, q)$; and
\item from $(\ve, q)$ to $(\zeta', q')$ labelled by $s$ for all $(\zeta', q') \in \delta_\ve(z, q)$
\end{enumerate}
for all $s \in \Sigma \cup \{\ve\}$, $\zeta \in Z^*$ and $q \in Q$. Take as initial and terminal states 
$$\vertti \Gamma = \{(\zeta_0, q_0)\}, \qquad \verttt \Gamma = \{(\zeta_1, q_1): \zeta_1 \in Z^*, \, q_1 \in Q_1\}.$$
The subautomaton of $\Gamma$ induced upon all accessible states of $\Gamma$ is called a \emph{pushdown automaton}. 
\end{definition}

Note that the automaton does not halt upon reaching an empty stack contrary to some definitions, though this can be seen not to affect the class of languages accepted by automata of this kind.

We have the following classical theorem due to Muller and Schupp in \cite[Theorem 2.6]{ms3}, which justifies the equivalence between the geometric definition of \Cref{def:cfa} and the computing scientific definition of \Cref{def:pda}.

\begin{theorem}\label{thm:ms3}
An automaton $\Gamma$ is isomorphic to a pushdown automaton if and only if it is isomorphic to a context-free automaton.
\end{theorem}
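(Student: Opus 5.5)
The proof splits into the two implications. Throughout, "context-free automaton" is read as in \Cref{def:cfa}: locally finite, accessible, with finitely many contexts up to isomorphism.

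\textbf{From pushdown to context-free.} Let $\Gamma$ be a pushdown automaton given by $(Q, \Sigma, Z, \delta, q_0, Q_1, \zeta_0)$.

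First, I reduce to a normal form by standard recoding, as in \cite{ms3}. Replace $Z$ by $Z \cup \{\bot\}$ with a bottom marker, replace the initial stack $\zeta_0$ by $\bot$ followed by a short preamble of moves, and eliminate $\ve$-moves from the empty stack. The result is a pushdown automaton whose every transition changes the stack height by at most one. This is harmless, since the statement only asks for isomorphism classes of automata.

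Second, I compare the distance $|(\zeta, q)|$ with the stack height $\|\zeta\|$. Each transition changes the height by at most one, so $\|\zeta\| - \|\zeta_0\| \le |(\zeta, q)|$. The key claim is the converse direction up to a constant: there is $C$ such that the context of a state $(\zeta z, q)$ is contained in the set of states whose stack has $\zeta$ as a proper prefix.

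Third, I deduce finiteness of contexts. The transitions out of $(\zeta z, q)$ depend only on $z$ and $q$, so the subautomaton on $\{(\zeta\eta, q')\}$ is isomorphic, via $\zeta\eta \mapsto \zeta'\eta$, to the one on $\{(\zeta'\eta, q')\}$ whenever the top letters agree. Hence the isomorphism type of a context is determined by finite data: the top letter, a bounded window of positions, and which frontier states are present. This gives finitely many contexts.

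The main obstacle is making the "distance is controlled by stack height" claim precise. This needs the Muller--Schupp argument that, in a connected component of $\nabla^n\Gamma$, one cannot return below the stack level of the frontier without passing through a state closer to the root.

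\textbf{From context-free to pushdown.} Let $\Gamma$ be context-free. Use the contextual automaton $\Lambda$ and the identification of states with pairs $(\pi, q)$ from \Cref{prop:syntactic} and \Cref{prop:sub}. Take the stack alphabet $Z = \edge \Lambda$, the positions $Q = \bigsqcup_t Q^t$ together with a little bookkeeping for the top state of $\Lambda$, and the initial configuration $(\ve, \phi_{v_0}(v_0))$.

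\Cref{lem:confreemain} says that the $s$-neighbours of $(\pi e, q)$ are exactly the pairs $(\pi\xi_i, q_i)$, where $|\xi_i| \le 2$ and the data $(\xi_i, q_i)$ depend only on $(e, q, s)$. This is precisely a pushdown transition function: pop $e$, push $\xi_i$, and move to position $q_i$.

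The depth-zero states $(\ve, q)$ are handled by the $\delta(\ve, q)$ clause of \Cref{def:pda}, again using finiteness, since $[\Gamma]$ has finitely many frontier states. Terminal positions are $Q_1^t$, matching the last observation in the definition of charts.

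It remains to check that the map $(\pi, q) \mapsto (\pi, q)$ is an isomorphism onto the accessible part. Bijectivity on states follows from \Cref{prop:syntactic}. Preservation of edges follows from \Cref{lem:confreemain}, and accessibility of $\Gamma$ gives the required surjectivity.
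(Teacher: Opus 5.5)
Your context-free $\Rightarrow$ pushdown half is what the paper has in mind: take $Z = \edge \Lambda$ and read the transition function off \Cref{lem:confreemain}. It is fine, with one caveat the paper shares. \Cref{lem:confreemain} only determines the \emph{set} $\tau(\st_s(v,\Gamma))$, and the $\delta_s$ of \Cref{def:pda} are set-valued, so this reproduces $\Gamma$ only when $\Gamma$ has no two parallel edges with the same label. That holds for the Schreier automata the paper actually uses.

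For pushdown $\Rightarrow$ context-free the paper gives no argument; it cites Muller and Schupp \cite[Theorem 2.6]{ms3}. Your sketch also defers its key step to ``the Muller--Schupp argument'', so logically it is the same citation. However, the claims surrounding that step are wrong as stated and would not stand as a proof.

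\textbf{The normal form does not preserve isomorphism type.} A preamble, and splitting multi-letter pushes into unit pushes, add states and subdivide edges. This preserves the accepted language but not the automaton. It is also unnecessary, since every transition changes the stack height by a uniformly bounded amount.

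\textbf{The key containment claim is false.} Note that your constant $C$ never actually appears in it. Your heuristic that one cannot go below the frontier's stack level without approaching the root also fails. Take $Z=\{a\}$, positions $q_0,q_1$, initial configuration $(\ve,q_0)$, and
\[
\delta_s(\ve,q_0)=\{(a,q_0)\},\qquad \delta_s(a,q_0)=\{(aa,q_0)\},\qquad \delta_t(a,q_0)=\{(\ve,q_1)\}.
\]
Then $|(a^k,q_0)|=k$ and $|(a^k,q_1)|=k+2$. The context of $(a^n,q_0)=(a^{n-1}a,q_0)$ has $(a^n,q_0)$ as its only frontier point, yet it contains $(a^{n-1},q_1)$. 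The correct statement needs a uniform slack, and establishing that slack is the substance of the Muller--Schupp argument (their bound on frontiers of ends).

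\textbf{Step 3 needs more than the top letter.} The shift $\zeta\eta\mapsto\zeta'\eta$ is an isomorphism between induced subgraphs of the \emph{complete} configuration graph. The paper's pushdown automaton, however, is only the accessible part, and accessibility of $(\zeta\eta,q')$ can depend on all of $\zeta$. For example, take pushes of $a$ and $b$ at $p$, a move $(\zeta b,p)\to(\zeta b,q)$, and $a$-pushes at $q$. Then $(w,q)$ is accessible iff $w$ contains $b$, so $(aa,q)$ is inaccessible while $(baa,q)$ is accessible. Moreover, to carry one context onto another, the shift must carry $\{x : |x|\ge n\}$ onto $\{x : |x|\ge n'\}$. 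That depends on the values of $|\cdot|$ on all boundary states of the subtree, not merely on which frontier states are present.

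So for this direction either cite \cite[Theorem 2.6]{ms3}, as the paper does, or actually prove the uniform-slack lemma.
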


Note that taking $Z = \edge \Lambda$ in \Cref{lem:confreemain} provides the `if' direction of this result.

We begin with the following proposition. 

\begin{proposition}\label{prop:beaut}
Let $\Gamma$ be a pushdown automaton. Then the language
$$\Bigg\{\,
\sigma \in \Sigma^*: 
\begin{aligned}
& \textup{there exists a path } \pi \textup{ such that } \\
& \,\iota(\pi) = \tau(\pi) \in \vertti \Gamma, \ell(\pi) = \sigma
\end{aligned}
\Bigg\} \subset \Sigma^*$$
is a context-free language.
\end{proposition}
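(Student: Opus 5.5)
The plan is to describe the loop labels at an arbitrary state $(w,q)$ of $\Gamma$ by a context-free grammar, splitting each loop at its lowest stack height. Accessibility of the base point is handled separately, by the classical fact that the set of accessible configurations of a pushdown automaton is regular.

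First, I would fix the finite data. Let $(Q,\Sigma,Z,\delta,q_0,Q_1,\zeta_0)$ be the defining tuple. I would invoke Büchi's theorem (the set of configurations reachable from $(\zeta_0,q_0)$ is a regular language over $Z\cup Q$). This gives, for each $q\in Q$, a finite automaton $R_q$ recognising $\{w\in Z^* : (w,q)\in \vertt\Gamma\}$. Next, for $q,r\in Q$ and $z\in Z$, I would define the standard ``triple'' languages.
\begin{itemize}
\item $A_{q,z,r}$ is the set of labels of paths from $(z,q)$ to $(\ve,r)$ whose stack is nonempty except at the last step.
\item $P_{r,x,z,s}$ is the set of labels of paths from $(x,r)$ to $(xz,s)$ whose stack never drops below the bottom letter $x$ (that letter may be read and rewritten).
\item There is a similar language for the empty-stack case, governed by $\delta_\ve$.
\end{itemize}
By the usual triple construction for pushdown automata, all of these are context-free. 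Since only the top letter is ever read, these path sets do not depend on the stack lying beneath.

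Next comes the decomposition of a loop $\pi$ at $(w,q)$. Let $h$ be the minimal stack length attained along $\pi$. Write $w=p\,v_1\cdots v_m$ with $|p|=h$, and let $r$ be the position at the first time the stack equals $p$. The bottom $h-1$ letters of $p$ are never read. Hence $\ell(\pi)$ factors as
$$D_m\cdots D_1\,U_1\cdots U_m, \qquad D_i\in A_{r_{i},v_i,r_{i-1}},\quad U_i\in P_{s_{i-1},x_{i-1},v_i,s_i}.$$
Here $r_m=s_m=q$ and $r_0=s_0=r$. The descent pops $v_m,\dots,v_1$, and $U_i$ is the segment after which $v_i$ lies permanently on top. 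The letter $x_{i-1}$ is the current top letter, namely the last letter of $p$ for $i=1$ and $v_{i-1}$ otherwise. Conversely, any such concatenation labels a loop at $(p\,v_1\cdots v_m,q)$, provided that this configuration is accessible.

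The pairs $(D_i,U_i)$ are nested, with the outermost pair corresponding to $v_m$. So I would introduce nonterminals $N_{a,x,r',s',\kappa}$, where $a$ and $x$ record the current state and top letter, and $\kappa$ is a state of $R_q$ reading $p\,v_1\cdots v_i$ from the left. The rules are
$$N_{\dots}\to A_{r',v,r''}\,N_{\dots}\,P_{s'',x,v,s'},$$
together with terminating rules $N\to\ve$ at the bottom. These terminating rules are allowed when $r''=s''$ and when $\kappa$ can arise from some accessible $p$ ending in $x$. That set of states is computable, since it is the set of states of $R_q$ reached by words ending in $x$. The grammar accepts when the last $\kappa$ is accepting and the outermost states equal $q$. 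The empty-$p$ case and $\ve$-transitions are handled by the same rules, with $\delta_\ve$ and $A$- and $P$-languages over $\ve$-labelled paths.

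The main obstacle, I expect, is exactly this bookkeeping of the top letter and regular state through the nesting. In particular, one must make sure that the regular constraint ``$p\,v_1\cdots v_m$ is accessible'' is checked left to right along the inside-out direction of the grammar. That direction matches the order in which the nesting is built, so an inner-to-outer chain of finite-state information suffices. Finally, the union over $q\in Q$ of finitely many context-free languages is context-free.
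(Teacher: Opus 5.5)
You have proved a statement about a different language. In the proposition, $\vertti\Gamma$ is the set of \emph{initial} states, i.e.\ the single configuration $(\zeta_0,q_0)$, so the language consists of the labels of loops based at $(\zeta_0,q_0)$ only. Your grammar accepts whenever the outermost $\kappa$ is accepting for $R_q$, and you then take a union over $q\in Q$. What you describe is therefore
\[
\bigcup_{(w,q)\in\vertt\Gamma}\{\ell(\pi):\iota(\pi)=\tau(\pi)=(w,q)\},
\]
which is in general strictly larger. For instance, take $Q=\{q_0\}$, $Z=\{y,z\}$, $\zeta_0=\ve$, $\delta_a(\ve,q_0)=\{(y,q_0)\}$, $\delta_a(y,q_0)=\{(yz,q_0)\}$, $\delta_a(z,q_0)=\{(zz,q_0)\}$ and $\delta_b(z,q_0)=\{(\ve,q_0)\}$. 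No transition removes the bottom letter $y$, so there is no non-trivial loop at $(\ve,q_0)$. Yet $ba$ labels a loop at the accessible configuration $(yz,q_0)$, and your grammar produces it (with $p=y$, $m=1$). The distinction matters in the paper's application, which needs exactly the loops at $(\ve,q_*)$, i.e.\ the words with $0^\omega\cdot\rho(\sigma)=0^\omega$.

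Once you fix the base point, B\"uchi's theorem, the states $\kappa$ and the recursive nonterminals are no longer needed. For loops at $(\zeta_0,q_0)$, $p$ ranges over prefixes of $\zeta_0$ and $m=|\zeta_0|-|p|$ is bounded, so a finite union of finite concatenations of triple languages suffices. Even then, your decomposition needs two corrections.

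First, when $m=0$ the product $D_m\cdots D_1U_1\cdots U_m$ is empty, so loops that never drop below their base height are missed. When $m\ge 1$, the excursions at minimal height are absorbed into $U_1$. For $m=0$ you need an extra factor: the labels of paths from $(x,r)$ to $(x,s)$ (or from $(\ve,r)$ to $(\ve,s)$) that never drop below the bottom. In the case $\zeta_0=\ve$ used in the paper, every loop is of this kind, so everything would rest on the ``empty-stack'' languages you leave unspecified.

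Second, the ascent factorisation assumes the path visits each configuration $(p\,v_1\cdots v_i,s_i)$. This fails if one transition pushes several letters (as the paper's type (iv) transitions do) or pushes while rewriting the top. First normalise, via fresh positions and $\ve$-labelled moves, so that every transition pops the top letter, rewrites it, or replaces $z$ by $zz'$. This does not change the loop labels at $(\zeta_0,q_0)$.

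For comparison, the paper works directly with graphs. It sets $\verttt\Gamma=\vertti\Gamma$. Since $(\zeta_0,q_0)$ is the only state with $|v|=0$, the contexts $\partial^v\Gamma$ with $|v|\ge 1$ merely lose their terminal states. There are still finitely many contexts up to isomorphism, so, regarding $\Gamma$ as context-free via Muller--Schupp, the language is context-free by definition.
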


\begin{proof}
Modify $\Gamma$ so that $\verttt \Gamma = \vertti \Gamma$. It suffices to show that $\Gamma$ remains a context-free automaton. It is clear, however, that $\Gamma$ still has finitely many contexts up to isomorphism.
\end{proof}

Throughout, for $\xi \in \{0, 1\}^*$, let $\xi \mapsto \overline{\xi}$ denote the involution
$$x_1 \hdots x_n \longmapsto x_n \hdots x_1.$$

\begin{definition}[Trailing zeros]
We say $\xi \in \{0, 1\}^{\leq \omega}$ has \emph{trailing zeros} if $0^n$ is a suffix of $\xi$ for some $n \geq 1$ (including $n = \omega$). Observe that every $\xi \in \{0, 1\}^{\leq\omega}$ can be written uniquely in the form $\xi = \xi|_\emptyset \, 0^n$ for some $n \geq 1$ and $\xi|_\emptyset \in \{0, 1\}^{\leq \omega}$ with no trailing zeros.
\end{definition}

In the following, we will construct an automaton, where, for $\xi \in \{0, 1\}^\omega$ in the orbit of $0^\omega$ under $V$, the stack $\zeta = \overline{\xi|_\emptyset} \in \{0, 1\}^*$ will represent the element $\xi \in \{0, 1\}^\omega$. This rectifies the fact that $V$ acts by \emph{prefix} partition maps upon elements of $\{0, 1\}^\omega$, while pushdown automata act upon the \emph{suffixes} of finite stacks.

Let $\rho: \Sigma^* \rightarrow V$ be a symmetric, surjective monoid homomorphism. For $s \in \Sigma$, let $\mcal{A}_s, \mcal{B}_s \subset \{0, 1\}^*$ be prefix partitions, and $f_s: \mcal{A}_s \rightarrow \mcal{B}_s$ a bijection such that $[f_s] = \rho(s)$. Define a pushdown automaton as follows. Take as the input alphabet $\Sigma$, stack alphabet $Z = \{0, 1\}$ and define 
$$Q = \{q_*\} \cup \{(s, \alpha): \alpha \textrm{ is a prefix of an element of } \mcal{A}_s\}.$$
Let $q_0 = q_1 = q_*$ and let $\zeta_0 = \ve$.

We now define the transitions in the following fashion: let
\begin{enumerate}[label = (\roman*)]
\item $\delta_s(z, q_*) = \{(z, (s, \ve))\}$;
\item $\delta_\ve(z, (s, \alpha)) = \{(\ve, (s, \alpha \, z))\}$ for $\alpha \notin \mcal{A}_s$; 
\item $\delta_\ve(\ve, (s, \alpha)) = \{(\ve, (s, \alpha \, 0)\}$ for $\alpha \notin \mcal{A}_s$; 
\item $\delta_\ve(z, (s, \alpha)) = \{(z \, \overline{f_s(\alpha)}, q_*)\}$ for $\alpha \in \mcal{A}_s$;
\item $\delta_\ve(\ve, (s, \alpha)) = \{(\overline{f_s(\alpha)|_\emptyset}, q_*)\}$ for $\alpha \in \mcal{A}_s$ 
\end{enumerate}
for all $s \in \Sigma$ and $z \in Z$ and otherwise let $\delta$ evaluate to the empty set. Henceforth, let $\Gamma$ be the pushdown automaton induced by the tuple $(Q, \Sigma, Z, \delta, q_0, Q_1, \zeta_0)$.

\begin{proposition}\label{prop:quitecool}
Let $s \in \Sigma$ and let $\xi, \xi' \in \{0, 1\}^\omega$ be in the orbit of $0^\omega$. Let $\zeta = \overline{\xi|_\emptyset}$ and $\zeta' = \overline{\xi'|_\emptyset}$. Then there exists a path $\pi$ in $\Gamma$ from $(\zeta, q_*)$ to $(\zeta', q_*)$ with $\ell(\pi) = s$ if and only if $\xi \cdot \rho(s) = \xi'$. 
\end{proposition}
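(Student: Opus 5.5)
The plan is to trace the computation of $\Gamma$ explicitly and show that it is deterministic. Starting from $(\zeta, q_*)$ and reading $s$, the automaton is forced along a unique path that ends at $(\zeta', q_*)$ with $\zeta' = \overline{(\xi\cdot\rho(s))|_\emptyset}$. Both directions then follow: existence of the path gives `if', and uniqueness of the path gives `only if'.

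\textbf{Structure of an $s$-labelled path.} A path $\pi$ with $\ell(\pi) = s$ consists of exactly one transition labelled $s$ together with some $\ve$-transitions. From any state with position $q_*$, the only transitions are those of type (i), labelled by letters of $\Sigma$. Hence:
\begin{itemize}
\item $\pi$ must begin with the $s$-transition of type (i), moving from $(\zeta, q_*)$ to $(\zeta, (s,\ve))$ with the stack unchanged;
\item $\pi$ then continues with $\ve$-transitions only, until it first returns to a position $q_*$;
\item the path cannot continue beyond that point without reading another letter.
\end{itemize}
The empty-stack case ($\xi = 0^\omega$, $\zeta = \ve$) is handled the same way, reading the clause for an empty stack with $\delta_s(\ve, q_*)$ in the evident manner. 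At each position $(s,\alpha)$, exactly one of (ii)--(v) applies, determined by whether $\alpha \in \mcal{A}_s$ and whether the stack is empty. So the continuation is unique.

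\textbf{Reading phase.} The top of the stack $\zeta = \overline{\xi|_\emptyset}$ is the first letter of $\xi$. While $\alpha \notin \mcal{A}_s$, transition (ii) pops the top letter $z$ and appends it to $\alpha$. Once the stack is exhausted, transition (iii) appends $0$ instead, which simulates the trailing zeros of $\xi$. By induction, after $k$ such steps, $\alpha$ is the length-$k$ prefix of $\xi$ and the stack is the reversal of the remainder of $\xi|_\emptyset$ (or empty). Since $\mcal{A}_s$ is a prefix partition, exactly one prefix $\alpha$ of $\xi$ lies in $\mcal{A}_s$, so this phase terminates at that $\alpha$. Write $\xi = \alpha\,\eta$.

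\textbf{Writing phase.} There are two cases, according to whether the stack is empty when $\alpha$ is reached.
\begin{itemize}
\item \emph{Stack nonempty.} Then $\eta|_\emptyset$ is nonempty, ends in $1$, and the stack equals $\overline{\eta|_\emptyset}$. Transition (iv) keeps the top letter $z$ and pushes $\overline{f_s(\alpha)}$, giving the stack $\overline{f_s(\alpha)\,\eta|_\emptyset}$. Because $\eta|_\emptyset$ ends in $1$, we have $f_s(\alpha)\,\eta|_\emptyset = (f_s(\alpha)\,\eta)|_\emptyset$.
\item \emph{Stack empty.} Then $\eta = 0^\omega$, and transition (v) produces the stack $\overline{f_s(\alpha)|_\emptyset} = \overline{(f_s(\alpha)\,0^\omega)|_\emptyset}$. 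This includes the boundary case in which the last genuine stack letter was popped exactly as $\alpha$ was completed.
\end{itemize}
In either case the final state is $(\overline{(\xi\cdot\rho(s))|_\emptyset}, q_*)$, since $\xi \cdot \rho(s) = f_s(\alpha)\,\eta$ by the definition of $[f_s] = \rho(s)$.

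\textbf{Conclusion and main obstacle.} The representation $\xi \mapsto \overline{\xi|_\emptyset}$ is injective on the orbit of $0^\omega$. So the unique endpoint equals $(\zeta', q_*)$ if and only if $\xi\cdot\rho(s) = \xi'$, which proves both directions. I expect the main obstacle to be the bookkeeping at the interface between genuine stack letters and the implicit trailing zeros. Two points need care: verifying that the stack encodes exactly $\xi|_\emptyset$ and not a representation with spurious zeros, and checking that cases (iv) and (v) each yield the normal form without trailing zeros. The empty-stack conventions of \Cref{def:pda} also need to be read carefully at this point.
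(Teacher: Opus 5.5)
Your proposal is correct and follows essentially the same route as the paper's proof. The paper likewise forces the first transition to be the letter-labelled one out of $q_*$, shows by induction through types (ii)/(iii) that $\tau(e_i) = (\overline{(a_i \hdots a_m \, \eta)|_\emptyset}, (s, a_1 \hdots a_{i - 1}))$, and splits into cases (iv)/(v), with the forced computation also giving the `if' direction. Your explicit convention for $\delta_s(\ve, q_*)$ is a worthwhile addition: as written, clause (i) is defined only for $z \in Z$, and the paper's proof tacitly assumes such a transition exists when $\xi = 0^\omega$.
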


\begin{proof}
Suppose $\pi = e_1 \hdots e_n$ is a path in $\Gamma$ with $\iota(\pi) = (\zeta, q_*)$, $\tau(\pi) = (\zeta', q_*)$ and $\ell(\pi) = s$. Observe that the only transitions with initial state $(\zeta, q_*)$ have label $s$. Therefore, since $\ell(\pi) = \ell(e_1) \hdots \ell(e_n)$, we deduce that $e_1$ is a transition of type (i) with $\ell(e_1) = s$ and $e_i$ is a transition of types (ii)$-$(v) with $\ell(e_i) = \ve$ otherwise (since, if $\pi$ contained more than $1$ transition of type (i), then $\ell(\pi)$ would have length at least 2, which is absurd). Moreover, the path $\pi$ can only visit a state with position $q_*$ initially and terminally (or otherwise $\pi$ would have more than $1$ transition of type (i), which is again absurd).

Suppose that $\xi = \alpha \, \eta$ for $\alpha \in \mcal{A}_s$ and $\eta \in \{0, 1\}^\omega$. Suppose that $\alpha = a_1 \hdots a_m$ for $a_i \in \{0, 1\}$. We claim that
\begin{equation}\label{eq:super}
\tau(e_i) = (\overline{(a_i \hdots a_m \, \eta)|_\emptyset}, (s, a_1 \hdots a_{i - 1}))
\end{equation}
for $i = 1, \hdots, m + 1$. We proceed by induction. The case $i = 1$ is clear, since $e_1$ is a transition of type (i). Now suppose that \eqref{eq:super} holds for some $1 \leq i \leq m$. Observe that $a_1 \hdots a_{i - 1} \neq \mcal{A}_s$ and so $e_{i + 1}$ must be a transition of type (ii) or (iii).

If $e_{i + 1}$ is a transition of type (ii), then $z = a_i$ is the final character of the stack. Since $a_1 \hdots a_{i - 1}$ does not belong to $\mcal{A}_s$, the transition $e_{i + 1}$ removes the final character $a_i$ from the stack and changes the position to $(s, a_1 \hdots a_i)$, as desired.

If $e_{i + 1}$ is a transition of type (iii), then $z = \ve$, since the stack is empty. In particular, $a_i = 0$, since 
$$\overline{(a_i \hdots a_m \, \eta)|_\emptyset} = \ve.$$
Again, since $a_1 \hdots a_{i - 1}$ does not belong to $\mcal{A}_s$, we have that 
\begin{align*}
\tau(e_{i + 1}) &= (\ve, (s, a_1 \hdots a_{i - 1} \, 0))\\
&= (\overline{(a_{i + 1} \hdots a_m \, \eta)|_\emptyset}, (s, a_1 \hdots a_{i - 1} \, a_i))
\end{align*}
as required. We deduce that 
$$\tau(e_{m + 1}) = (\overline{\eta|_\emptyset}, (s, a_1 \hdots a_m)).$$
Therefore, since $a_1 \hdots a_m = \alpha \in \mcal{A}_s$, we deduce that $e_{m + 2}$ must be a transition of type (iv) or (v). In particular, $n = m + 2$ and $\tau(e_{m + 2}) = \tau(\pi) = (\zeta', q_*)$ (since $\pi$ only visits a state of the form $(\zeta'', q_*)$ initially or terminally).

If $e_{m + 2}$ is a transition of type (iv), then $\overline{\eta|_\emptyset} \neq \ve$. Hence $e_{m + 2}$ appends $\overline{f_s(\alpha)}$ to the end of the stack and changes the position to $q_*$. Therefore
\begin{align*}
\tau(e_{m + 2}) &= (\overline{\eta|_\emptyset} \, \overline{f_s(\alpha)}, q_*)\\
&= (\overline{(f_s(\alpha) \, \eta)|_\emptyset}, q_*)\\
&= (\overline{\xi'|_\emptyset}, q_*)
\end{align*}
for $\xi \cdot \rho(s) = \xi \cdot [f_s(\alpha)] = \xi'$. 

Similarly, if $e_{m + 2}$ is a transition of type (v), then $\overline{\eta|_\emptyset} = \ve$. Hence $e_{m + 2}$ appends $\overline{f_s(\alpha)}$ to the end of the stack and changes the position to $q_*$. Therefore
\begin{align*}
\tau(e_{m + 2}) &= (\overline{f_s(\alpha)|_\emptyset}, q_*)\\
&= (\overline{(f_s(\alpha) \, \eta)|_\emptyset}, q_*)\\
&= (\overline{\xi'|_\emptyset}, q_*)
\end{align*}
for $\xi \cdot \rho(s) = \xi \cdot [f_s(\alpha)] = \xi'$. From these two cases, the `only if' follows. This also gives a construction of such a path $\pi$ for $\xi \cdot \rho(s) = \xi'$ and so shows the `if' direction, too.
\end{proof}

\begin{proposition}\label{prop:reallycool}
Let $\sigma \in \Sigma^*$ and let $\xi, \xi' \in \{0, 1\}^\omega$ be in the orbit of $0^\omega$. Let $\zeta = \overline{\xi|_\emptyset}$ and $\zeta' = \overline{\xi'|_\emptyset}$. Then there exists a path $\pi$ in $\Gamma$ with label $\sigma$ from $(\zeta, q_*)$ to $(\zeta', q_*)$ if and only if $\xi \cdot \rho(\sigma) = \xi'$.
\end{proposition}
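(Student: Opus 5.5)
The plan is induction on the length of $\sigma$, using \Cref{prop:quitecool} as the inductive step. I will write $\sigma = s_1 \hdots s_n$ with $s_i \in \Sigma$. The key structural observation is already implicit in the proof of \Cref{prop:quitecool}: every transition leaving a state with position $q_*$ is of type (i) and carries a non-empty label, while every transition of types (ii)--(v) has label $\ve$. Moreover, the only transitions entering position $q_*$ are of types (iv) and (v).

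For the base case $n = 0$, I need $\sigma = \ve$. Any path from $(\zeta, q_*)$ with empty label cannot use a type (i) transition, and no other transition leaves position $q_*$, so the path is trivial and $\zeta' = \zeta$. Conversely, $\rho(\ve) = 1$ gives $\xi' = \xi$. It remains to check that the encoding $\xi \mapsto \overline{\xi|_\emptyset}$ is injective on the orbit of $0^\omega$. This holds because each orbit element has the form $\alpha\,0^\omega$ by \Cref{rem:vprops}, and is recovered from $\xi|_\emptyset$ by appending $0^\omega$.

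For the inductive step, suppose $\pi$ has label $s_1 \hdots s_n$ with $n \geq 1$, and let $s = s_n$. I will split $\pi$ at the last type (i) transition it uses. The states visited with position $q_*$ are exactly the initial state, the terminal states of type (iv)/(v) transitions, and the final state. Between consecutive visits to position $q_*$, the path runs through positions $(s,\alpha)$. The intermediate states at position $q_*$ lie at the boundaries between these letter blocks, so I can write $\pi = \pi'\pi''$ with $\tau(\pi') = (\zeta'', q_*)$, $\ell(\pi') = s_1 \hdots s_{n-1}$ and $\ell(\pi'') = s_n$.

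To apply the induction hypothesis, I need $\zeta''$ to have the form $\overline{\xi''|_\emptyset}$ for some $\xi''$ in the orbit. Set $\xi'' = \overline{\zeta''}\,0^\omega$. Since stacks reached at $q_*$ never end in a trailing $0$ when reversed, we have $(\xi'')|_\emptyset = \overline{\zeta''}$. Alternatively, I can avoid this point by inducting forward from $\xi$: define $\xi'' = \xi \cdot \rho(s_1\hdots s_{n-1})$ and apply the induction hypothesis to $\pi'$ to identify $\zeta''$. Since $\Gamma$ may be nondeterministic, the direct route is the induction hypothesis in the \enquote{only if} direction, which gives $\zeta'' = \overline{(\xi\cdot\rho(s_1\hdots s_{n-1}))|_\emptyset}$. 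Then \Cref{prop:quitecool} applied to $\pi''$ gives $\xi' = \xi''\cdot\rho(s_n) = \xi\cdot\rho(\sigma)$. For the converse, I will concatenate the paths provided by the induction hypothesis and by \Cref{prop:quitecool}, using the same intermediate $\xi''$. All intermediate points stay in the orbit of $0^\omega$, since $V$ preserves it.

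The main obstacle is the decomposition step. I must justify that a path labelled $s_1\hdots s_n$ from position $q_*$ to position $q_*$ passes through position $q_*$ exactly at the letter boundaries. This comes from the fact that each $(s,\alpha)$-run is forced to end with a type (iv) or (v) transition back to $q_*$ before any new letter can be read. I also need the injectivity of the stack encoding, so that \enquote{the intermediate stack equals $\overline{\xi''|_\emptyset}$} actually pins down $\xi''$.
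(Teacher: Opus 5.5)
Your proposal is correct and is essentially the paper's argument. The paper cuts $\pi$ at its visits to position $q_*$ into blocks that each read a single letter and applies \Cref{prop:quitecool} to each block, which is your induction unrolled. One caution: your ``alternative'' route does not actually avoid checking that the intermediate stack $\zeta''$ encodes an orbit point, because the induction hypothesis, like \Cref{prop:quitecool}, assumes both endpoints are encodings. So rely on your first route. Its invariant does hold: a type (iv) transition pushes onto a nonempty stack and keeps its bottom symbol, while a type (v) transition pushes $\overline{f_s(\alpha)|_\emptyset}$, whose bottom symbol is $1$. The paper itself simply assumes this point.
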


\begin{proof}
The `if' follows from considering $\sigma = s_1 \hdots s_n$ for $s_i \in \Sigma$ and concatenating the paths constructed in \Cref{prop:quitecool}. Thus it suffices to show the `only if'.

Suppose $\pi$ is a path with $\iota(\pi) = (\zeta, q_*)$, $\tau(\pi) = (\zeta', q_*)$ and $\ell(\pi) = \sigma$. Observe that we may decompose $\pi = \pi_1 \hdots \pi_n$ uniquely such that $\pi$ only encounters a state with position $q_*$ at states of the form $\iota(\pi_i)$ and $\tau(\pi_i)$. 

We claim that $\ell(\pi_i)$ has length $1$ in $\Sigma^*$. To see this, observe that a transition has label $\ve$ unless its initial state has position $q_*$. Therefore, by definition of the $\pi_i$, if $\pi_i = e_1 \hdots e_k$, then $e_1$ must be a transition of type (i), while all subsequent transitions in $\pi_i$ must have types (ii)$-$(v) and hence have $\ell(e_j) = \ve$ for $j > 1$. Therefore, we may suppose $s_i = \ell(\pi_i)$ for $s_i \in \Sigma$. 

Suppose that $\tau(\pi_i) = (\zeta_i, q_*)$ with $\zeta_i = \overline{\xi_i|_\emptyset}$ for $\xi_i \in \{0, 1\}^\omega$ in the orbit of $0^\omega$. Then by \Cref{prop:quitecool}, we deduce that $\xi_1 = \xi \cdot \rho(s_1)$ and $\xi_{i + 1} = \xi_i \cdot \rho(s_{i + 1})$ for $i = 1, \hdots, n - 1$, whence 
$$\xi' = \xi \cdot \rho(s_1) \hdots \rho(s_n) = \xi \cdot \rho(\sigma)$$
and the result follows.
\end{proof}

\begin{proof}[Proof of `if' in \Cref{thm:main}]
By \Cref{prop:pullback}, it suffices to show that $V_*$ is a context-free subgroup of $V$. This is immediate from \Cref{prop:beaut} for the automaton $\Gamma$, since by \Cref{prop:reallycool}, for $\sigma \in \Sigma^*$, there exists a path from $(q_*, \ve)$ to $(q_*, \ve)$ in $\Gamma$ with label $\sigma$ if and only if $0^\omega \cdot \rho(\sigma) = 0^\omega$. Hence $\rho^{-1} \, V_* \subset \Sigma^*$ is a context-free language.
\end{proof}

\begin{proof}[Proof of `if' in \Cref{cor:main}]
From the `if' direction of \Cref{thm:main} and \Cref{lem:cftrsub}, it suffices to show that $\core(V_*)$ is trivial. But by \Cref{rem:vprops}, $\core(V_*)$ is the subgroup of $V$ which stabilises the orbit of $0^\omega$ pointwise. Since this orbit is dense in $\{0, 1\}^\omega$ and $V$ acts continuously upon $\{0, 1\}^\omega$, $\core(V_*)$ is trivial.
\end{proof}

\section{Applications}

We now discuss a brief application of both directions of \Cref{thm:main} in constructing an interesting class of context-free subgroups. Our primary motivation is to show how \Cref{thm:main} allows us to understand dynamics upon $V$ by recourse to context-free subgroups and vice versa. We refer the reader to \cite[14.2.4]{cocf} for the original discussion of the relationship between context-free languages and wreath products.

We have the following.

\begin{proposition}\label{prop:demon}
Let $H$ be a finitely generated group and suppose that $H_*$ is a context-free subgroup of $H$. Then there exists a homomorphism $\phi: H \rightarrow V$ and an $n \geq 1$ such that $\phi$ is demonstrative with respect to $H_*$ for an open set $U = 0^n \, \{0, 1\}^\omega$.
\end{proposition}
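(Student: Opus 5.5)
The approach is to combine the `only if' direction of \Cref{thm:main} with the demonstrativity already established in \Cref{lem:embed}, and then conjugate inside $V$ so that the open set becomes a cone of the form $0^n\,\{0,1\}^\omega$.

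\textbf{Step 1: a demonstrative homomorphism.} Let $\Gamma$ be the Schreier automaton of $H_*$ in $H$ and $\Delta$ its pumping automaton. \Cref{lem:finalstep} gives a homomorphism $\phi_1: H \rightarrow [\mcal{G}(\Delta_+)]$ with $H_* = \phi_1^{-1}\,[\mcal{G}(\Delta_+)]_*$. Here $[\mcal{G}(\Delta_+)]_*$ is the stabiliser of a point $\pi \in \Delta^*_0$, namely the one corresponding to the initial state of $\Gamma$. \Cref{lem:embed} then gives an embedding $\phi_2: [\mcal{G}(\Delta_+)] \hookrightarrow V$ which is demonstrative for $U' = f(\pi)\,\{0,1\}^\omega$. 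Set $\psi = \phi_2 \circ \phi_1$.

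\textbf{Step 2: checking the three conditions for $\psi$.} The key point is that $\pi$ is isolated in $(\Delta_+)^\omega_0$ by \Cref{prop:density}. Indeed, $\pi$ is identified with $\pi\,e_v\,(e_+)^\omega$, and its image under $f$ is $f(\pi)$ followed by the empty word, with $f(e_v)$ either $\ve$ or a nonempty string. The cone $f(\pi)\{0,1\}^\omega$ is therefore exactly the set of points $f(\pi)\eta$ coming from the single path $\pi$. By the definition of $\phi_2$, any $g$ acts there as $f(\pi)\eta \mapsto f(\pi\cdot g)\eta$. So:
\begin{enumerate}[label=(\alph*)]
\item If $\pi \cdot g = \pi$, then $\phi_2(g)$ fixes $U'$ pointwise.
\item If $\pi\cdot g \neq \pi$, then $f(\pi\cdot g)$ and $f(\pi)$ are incomparable prefixes. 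This uses \Cref{prop:uniquepaths}: neither cone $f(\pi)\{0,1\}^\omega$ nor $f(\pi\cdot g)\{0,1\}^\omega$ can contain the other, because each is the full set of points decoding to its own finite path. Hence $U'\cap U'\cdot\phi_2(g) = \emptyset$.
\end{enumerate}
Together these give the equivalence (i)$\Leftrightarrow$(ii)$\Leftrightarrow$(iii) for $h \in H$ via $g = \phi_1(h)$. For (iii), note that $h \in H_*$ if and only if $\phi_1(h)$ fixes $\pi$.

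\textbf{Step 3: moving the cone to $0^n\,\{0,1\}^\omega$.} Let $\alpha = f(\pi)$ and choose $n = |\alpha|$, or $n=1$ if $\alpha = \ve$. When $\alpha = \ve$, every element fixes everything or nothing; condition (ii) then forces $\psi(H)$ to be trivial on the fixing part, and one can handle this by any nonempty refinement. Otherwise pick $c \in V$ that maps $\alpha\{0,1\}^\omega$ onto $0^n\{0,1\}^\omega$ by the prefix replacement $\alpha \mapsto 0^n$. Such a $c$ exists because the complements of the two cones both admit prefix partitions with the same number of pieces, which can be arranged by splitting leaves. Then set $\phi = c^{-1}\psi(\cdot)c$. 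Conjugation transports the demonstrativity conditions verbatim to $U = U'\cdot c = 0^n\{0,1\}^\omega$.

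The main obstacle is Step 2(b): proving that distinct finite paths yield disjoint cones, rather than nested ones. I would derive this from the uniqueness statement in \Cref{prop:uniquepaths}, which already encodes that a point $f(\pi)\eta$ determines $\pi$.
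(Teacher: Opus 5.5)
Your proposal is correct and follows the paper's plan. You compose the homomorphism of \Cref{lem:finalstep} with the embedding of \Cref{lem:embed} and then conjugate inside $V$; along the way you also justify why the cone $f(\pi)\,\{0,1\}^\omega$ is demonstrative, with $f(\pi)$ read as the label of the full path $\pi\,e_v\,(e_+)^\omega$, which the paper only asserts. The only difference is the last step: you map the cone $\alpha\,\{0,1\}^\omega$ exactly onto $0^{|\alpha|}\,\{0,1\}^\omega$ by a prefix replacement, whereas the paper moves a point of $U$ to $0^\omega$ by density of its orbit and then shrinks $U$ to some $0^n\,\{0,1\}^\omega$, implicitly using that demonstrativity passes to nonempty open subsets. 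Your degenerate case $\alpha=\ve$ never arises, since the first edge of $\pi$ leaves $\star$, which has at least two outgoing transitions in $\Delta_+$.
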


\begin{proof}
The existence of a homomorphism $\phi: H \rightarrow V$ which is demonstrative with respect to $H_*$ for some $U \subset \{0, 1\}^\omega$ follows from combining \Cref{lem:embed} and \Cref{lem:finalstep}, observing that the embedding constructed in \Cref{lem:embed} is in fact demonstrative. Moreover, by \Cref{rem:vprops}, there exists an element $\alpha \, 0^\omega \in U$ and some $g \in V$ with $0^\omega \cdot g = \alpha \, 0^\omega$. Therefore, conjugating $\phi$ by $g$, we may suppose that $0^\omega \in U$ and so $0^n \, \{0, 1\}^\omega \subset U$ for some $n \geq 1$. By replacing $U$ by the open subset $0^n \, \{0, 1\}^\omega$, the result follows.
\end{proof}

\begin{definition}[Supports]
Let $g \in V$. The \emph{support} of $g$ is the open set
$$\supp(g) = \{\xi \in \{0, 1\}^\omega: \, \xi \, g \neq \xi\}.$$
Observe that, for $g, h\in V$, if $\supp(g)$ and $\supp(h)$ are disjoint, then $g$ and $h$ commute. Moreover, $\supp(h^{-1} \, g \, h) = \supp(g) \, h$. 
\end{definition}

\begin{definition}[Generalised wreath products]
For groups $G, H$ and $H_*$ a subgroup of $H$, define their \emph{generalised wreath product} by
$$G \wr_{H_*} H = \bigg(\bigoplus_{H_* \setminus H} G \bigg) \rtimes H$$
where, for $\widetilde{h} \in H$ and $g_h \in H$,
$$\widetilde{h}^{-1} \, \bigg(\bigoplus_{H_* \setminus H} g_h\bigg) \, \widetilde{h} = \bigoplus_{H_* \setminus H}g_{h \, \widetilde{h}^{-1}}.$$
Here, the direct sums are quantified over cosets $H_* \, h$ for $h \in H$ distinct coset representatives of $H_* \setminus H$. 
\end{definition}

\begin{lemma}\label{thm:demon}
Let $G, H$ be finitely generated groups and $\phi_0: G \rightarrow V$, $\phi_1: H \rightarrow V$ are homomorphisms such that $\phi_0^{-1}\, V_* = G_*$ and $\phi_1^{-1} \, V_* = H_*$. Suppose further that, for some $n \geq 1$, $\phi_1$ is demonstrative with respect to $H_*$ for an open set $U = 0^n \, \{0, 1\}^\omega$. Let $\theta$ denote the homeomorphism 
$$\theta: \{0, 1\}^\omega \longrightarrow U, \qquad \xi \longmapsto \alpha \, \xi.$$
Then there exists a homomorphism 
$$\phi: G \wr_{H_*} H \longrightarrow V$$
which agrees with $\phi_1$ on $H$ and which is defined on $G$ by 
$$\xi \cdot \phi(g) = 
\begin{cases}
\xi \cdot \theta^{-1} \, \phi_0(g) \, \theta & \textrm{for } \xi \in U; \\
\xi & \textrm{otherwise.}
\end{cases}$$
Moreover, 
$$\phi^{-1}\, V_* = \bigg(\bigg(\bigoplus_{H_* \setminus (H - H_*)} G\bigg) \oplus G_*\bigg) \rtimes H_*.$$
\end{lemma}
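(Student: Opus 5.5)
The plan is to build $\phi$ through the universal property of the semidirect product. First I define a homomorphism from the base group $\bigoplus_{H_* \setminus H} G$ to $V$. Then I check that it is equivariant for the conjugation action of $H$ through $\phi_1$. Finally I compute the stabiliser of $0^\omega$ directly.

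\textbf{Step 1 (the $G$-copies).} Write $\alpha = 0^n$, so that $\theta(\xi) = 0^n\,\xi$ and $U = \theta(\{0,1\}^\omega)$. Let $\psi(g) \in V$ be the element given in the statement: it acts as $\theta^{-1}\,\phi_0(g)\,\theta$ on $U$ and as the identity elsewhere. It lies in $V$ because it is locally a prefix substitution $0^n\beta\eta \mapsto 0^n\beta'\eta$. The map $g \mapsto \psi(g)$ is a homomorphism $G \to V$ with $\supp(\psi(g)) \subset U$.

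For a coset $c = H_* h$, I send the copy of $G$ indexed by $c$ into $V$ by
$$g \longmapsto \phi_1(h)^{-1}\,\psi(g)\,\phi_1(h).$$
I will match the direction of conjugation (whether to use $h$ or $h^{-1}$) to the convention in the definition of the generalised wreath product; that choice is bookkeeping only.

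\textbf{Step 2 (well-definedness on cosets).} Suppose $h' = kh$ with $k \in H_*$. By demonstrativity, $\phi_1(k)$ fixes $U$ pointwise, and so does $\phi_1(k)^{-1}$. Since $\supp \psi(g) \subset U$, a direct check on points in $U$ and points outside $U$ gives $\phi_1(k)^{-1}\,\psi(g)\,\phi_1(k) = \psi(g)$. Hence the image depends only on the coset $c$.

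\textbf{Step 3 (commuting images and equivariance).} The image of the copy indexed by $c = H_*h$ has support in $U\cdot\phi_1(h)$. If $H_*h \neq H_*h'$, then $h'h^{-1} \notin H_*$, so demonstrativity gives $U \cap U\cdot\phi_1(h'h^{-1}) = \emptyset$. Translating by $\phi_1(h)$, the sets $U\cdot\phi_1(h)$ and $U\cdot\phi_1(h')$ are disjoint. By the remark on supports, the images of distinct copies therefore commute, and we obtain a homomorphism $\bigoplus_{H_*\setminus H} G \to V$.

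Conjugating the copy indexed by $H_*h$ by $\phi_1(\widetilde h)$ yields, by construction, the copy indexed by the translated coset. So the relation defining the semidirect product is respected. Together with $\phi_1$ on $H$, this gives $\phi$ on $G \wr_{H_*} H$. On the copy indexed by the trivial coset $H_*$, $\phi$ is exactly the displayed formula.

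\textbf{Step 4 (the preimage of $V_*$).} Take $x = \big(\bigoplus_c g_c\big)\,h$. Since $0^\omega \in U$, and every nontrivial coset has its copy supported in a set disjoint from $U$, only $g_{H_*}$ moves $0^\omega$. It sends $0^\omega = \theta(0^\omega)$ to $\theta\big(0^\omega\cdot\phi_0(g_{H_*})\big) \in U$.

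Next apply $\phi_1(h)$. If $h \notin H_*$, then $U\cdot\phi_1(h)$ is disjoint from $U$, so the image is not $0^\omega$. If $h \in H_*$, then $\phi_1(h)$ fixes $U$ pointwise, so $x$ fixes $0^\omega$ if and only if $0^\omega\cdot\phi_0(g_{H_*}) = 0^\omega$, that is, if and only if $g_{H_*} \in G_*$.

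So $\phi^{-1} V_*$ consists of exactly those elements with arbitrary entries on the nontrivial cosets, an entry in $G_*$ on the trivial coset, and $h \in H_*$. This is the claimed subgroup. I expect the main obstacle to be only the consistency of the left/right conventions in Steps 1 and 3; the dynamical content is the disjointness and pointwise-fixing supplied by demonstrativity.
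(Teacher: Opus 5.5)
Your proposal is correct and follows essentially the same route as the paper. Both arguments extend the $U$-supported copy of $\phi_0$ by conjugating with $\phi_1(h)$, use demonstrativity for coset-independence and for disjoint (hence commuting) supports, verify the semidirect-product relation, and read off the stabiliser of $0^\omega \in U$. Your Step 4 is somewhat more explicit than the paper's, spelling out why the nontrivial-coset factors fix $0^\omega$ and why $h \notin H_*$ pushes the point out of $U$.
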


\begin{proof}
For $g \in G$, let $\widetilde{\phi}_0(g)$ denote the element of $V$ defined by
$$\xi \cdot \widetilde{\phi}_0(g) = \begin{cases}
\xi \cdot \theta^{-1} \, \phi_0(g) \, \theta & \textrm{for } \xi \in U; \\
\xi & \textrm{otherwise.}
\end{cases}$$
for $\xi \in \{0, 1\}^\omega$. Clearly $\widetilde{\phi}_0$ is a homomorphism and for all $g \in G$, $\supp(\widetilde{\phi}_0(g)) \subset U$. 

Observe that, since every $h_* \in H_*$ has that $\phi_1(h_*)$ fixes $U$ pointwise, for $g \in G$ and $h \in H$,
$$\widetilde{\phi}_h(g) = \phi_1(h)^{-1} \, \widetilde{\phi}_0(g) \, \phi_1(h)$$
does not depend upon the representative of a coset $H_* \, h$. Also observe that, for $g \in G$ and $h \in H$, the supports $\supp(\widetilde{\phi}_h(g)) \subset U \cdot \phi_1(h)$ are disjoint for distinct cosets $H_* \, h$. Therefore, for $g, g' \in G$ and $h, h' \in H$ with $H_* \, h \neq H_* \, h'$, $\widetilde{\phi}_h(g)$ and $\widetilde{\phi}_{h'}(g')$ commute. Thus, we may define a homomorphism
$$\phi: \bigoplus_{H_* \setminus H} G \longrightarrow V, \qquad \bigoplus_{H_* \setminus H} g_h \longmapsto \bigoplus_{H_* \setminus H} \widetilde{\phi}_h(g_h)$$
where $g_h \in G$ for $h \in H$ coset representatives of $H_* \setminus H$, which agrees with $\widetilde{\phi}_0$ on $G$.

We now show that $\phi$ extends to a homomorphism from $G \wr_{H_*} H$ which agrees with $\phi_1$ on $H$. But observe that for $\widetilde{h} \in H$ and $g_h \in G$ where $h \in H$ are coset representatives of $H_* \setminus H$, 
\begin{align*}
\phi_1(\widetilde{h})^{-1} \, \phi\bigg(\bigoplus_{H_* \setminus H} g_h \bigg) \, \phi_1(\widetilde{h}) &= \phi_1(\widetilde{h})^{-1} \, \bigg(\bigoplus_{H_* \setminus H} \widetilde{\phi}_h(g_h)\bigg) \, \phi_1(\widetilde{h}) \\
&= \bigoplus_{H_* \setminus H} \widetilde{\phi}_{h \, \widetilde{h}}(g_h)\\
&= \bigoplus_{H_* \setminus H} \widetilde{\phi}_h(g_{h \, \widetilde{h}^{-1}})\\
&= \phi \bigg(\bigoplus_{H_* \setminus H} g_{h \, \widetilde{h}^{-1}}\bigg)\\
&= \phi \bigg(\widetilde{h}^{-1} \, \bigg(\bigoplus_{H_* \setminus H} g_h \bigg) \, \widetilde{h}\bigg).
\end{align*}
Therefore such a $\phi$ exists.

To compute the stabiliser, let $h_* \in H$ and $g_h \in G$ for $h \in H$ coset representatives of $H_*\setminus H$. Then the element
$$\phi \bigg(\bigg(\bigoplus_{H_* \setminus H} g_h\bigg) \, h_* \bigg)$$
fixes $0^\omega$ if and only if $h_*$ fixes $U$ (that is, if $h_* \in H_*$) and if $\widetilde{\phi}_0(g_1)$ fixes $0^\omega$ (that is, $g_1 \in G_*$), since $\theta$ sends $0^\omega$ to itself. Therefore
$$\phi^{-1}\, V_* = \bigg(\bigg(\bigoplus_{H_* \setminus (H - H_*)} G\bigg) \oplus G_*\bigg) \rtimes H_*$$
as required.
\end{proof}

We now have the following. 

\begin{proof}[Proof of \Cref{thm:wreath}]
By \Cref{prop:demon} (which strengthens the `only if' direction of \Cref{thm:main}), there exists $\phi_1: H \rightarrow V$ such that $\phi_1$ is demonstrative with respect to $H_*$ for an open set $U = 0^n \, \{0, 1\}^\omega$ for some $n \geq 1$. Therefore, by \Cref{thm:demon}, there exists a map $\phi: G \wr_{H_*} H \rightarrow V$ such that 
$$\phi^{-1} \, V_* = \bigg(\bigg(\bigoplus_{H_* \setminus (H - H_*)} G\bigg) \oplus G_*\bigg) \rtimes H_*,$$
which is a context-free subgroup of $G \wr_{H_*} H$ by the `if' direction of \Cref{thm:main}.
\end{proof}

\bibliography{ref}

@article{bleakcocf,
	title        = {Embeddings into {Thompson's} group {$V$} and \textit{coCF} groups},
	author       = {Bleak, C. and Matucci, F. and Neunhöffer, M.},
	year         = 2016,
	journal      = {J. Lond. Math. Soc.},
	volume       = 94,
	number       = 2,
	pages        = {583--597}
}

@article{cfp,
	title        = {Introductory notes on {Richard Thompson}'s groups},
	author       = {Cannon, J. W. and Floyd, W. J. and Parry, W. R.},
	year         = 1996,
	journal      = {Enseign. Math.},
	volume       = 42,
	pages        = {215--256},
	issue        = 2
}

@article{demonstrative,
	title        = {A dynamical definition of f.g. virtually free groups},
	author       = {Bennett, D. and Bleak, C.},
	year         = 2016,
	journal      = {Internat. J. Algebra Comput.},
	volume       = 26,
	number       = 1,
	pages        = {105--121}
}

@article{hardest,
	title        = {The Hardest Context-Free Language},
	author       = {Greibach, S. A.},
	year         = 1973,
	journal      = {SIAM J. Comput.},
	volume       = 2,
	number       = 4
}

@book{hopcroft,
	title        = {Automata Theory, Languages, and Computation},
	author       = {Hopcroft, J. E. and Motwani, R. and Ullman, J. D.},
	year         = 2007,
	publisher    = {Addison-Wesley},
	edition      = 3
}

@article{lehnert,
	title        = {The co-word problem for the {Higman-Thompson} group is context-free},
	author       = {Lehnert, J. and Schweitzer, P.},
	year         = 2007,
	journal      = {Bull. Lond. Math. Soc.},
	volume       = 39,
	number       = 2,
	pages        = {235--241}
}

@phdthesis{lehnertthesis,
	title        = {Gruppen von quasi-Automorphismen},
	author       = {Lehnert, J.},
	year         = 2008,
	school       = {Universit{\"a}tsbibliothek Johann Christian Senckenberg}
}

@article{ms1,
	title        = {Pushdown Automata, Graphs, Ends, Second-Order Logic, and Reachability Problems},
	author       = {Muller, D. E. and Schupp, P. E.},
	year         = 1981,
	publisher    = {ACM},
	series       = {STOC '81},
	pages        = {46--54}
}

@article{ms2,
	title        = {Groups, the Theory of Ends, and Context-Free Languages},
	author       = {Muller, D. E. and Schupp, P. E.},
	year         = 1983,
	journal      = {J. Comput. System Sci.},
	pages        = {295--310},
	issue        = 26
}

@article{ms3,
	title        = {The Theory of Ends, Pushdown Automata, and Second-Order Logic},
	author       = {Muller, D. E. and Schupp, P. E.},
	year         = 1985,
	journal      = {Theoret. Comput. Sci.},
	pages        = {51--75},
	issue        = 37
}

@article{silberstein,
	title        = {Context-free pairs of groups {I}: Context-free pairs and graphs},
	author       = {Ceccherini-Silberstein, T. and Woess, W.},
	year         = 2012,
	journal      = {European J. Combin.},
	volume       = 33,
	number       = 7,
	pages        = {1449--1466}
}

@book{trees,
	title        = {Trees},
	author       = {Serre, J.-P.},
	year         = 1980,
	publisher    = {Springer-Verlag}
}

@book{wpig,
	title        = {Word Processing in Groups},
	author       = {Epstein, D. B. A. and Cannon, J. W. and Holt, D. F. and Levy, S. V. F. and Paterson, M. S. and Thurston, W. P.},
	year         = 1992,
	publisher    = {Jones and Bartlett}
}

@book{thurston,
	title        = {Three-Dimensional Geometry and Topology},
	author       = {Thurston, W. P.},
	year         = 1997,
	volume       = 1
}

@book{cocf,
	title        = {Groups, Languages and Automata},
	author       = {Holt, D. F. and Rees, S. and Röver, C. E.},
	year         = 2017,
	publisher    = {Cambridge University Press},
	series       = {London Mathematical Society Student Texts}
}

@book{sakar,
	title        = {Elements of Automata Theory},
	author       = {Sakarovitch, J.},
	year         = 2009,
	publisher    = {Cambridge University Press}
}

@misc{birget,
	title        = {On the complexity of the word problem of the {R. Thompson} group {$V$}},
	author       = {Birget, J.},
	year         = 2022,
	howpublished = {arXiv:2203.08592}
}

@misc{bleakagain,
	title        = {Action graphs, semiconjugacy, and non-embedding in {Thompson}'s group {$V$}},
	author       = {Hyde, J. and Skipper, R. and Zaremsky, M. C. B.},
	year         = 2026,
	howpublished = {arXiv:2605.20564v2}
}

@article{cftr,
	author 	     = {D'Angeli, D. and Matucci, F. and Perego, D. and Rodaro, E.},
	title        = {Context-free graphs and their transition groups},
	journal      = {Trans. Lond. Math. Soc.},
	volume       = {13},
	number       = {1},
	pages        = {e70034},
	year         = {2026}
}

@misc{independent,
	title        = {A graph-theoretical characterisation of subgroups of {Thompson's} group {$V$}},
	author       = {Bodart, C. and D'Angeli, D. and Perego, D. and Rodaro, E.},
	year         = 2026,
	howpublished = {arxiv:2608.02111}
}
\bibliographystyle{plain}

\end{document}